\definecolor{citegreen}{rgb}{0,0.8,0}
\definecolor{refred}{rgb}{0.8,0,0}
\newtheorem{theorem}{Theorem}
\newtheorem{lemma}[theorem]{Lemma}
\newtheorem{corollary}[theorem]{Corollary}
\newtheorem{proposition}[theorem]{Proposition}
\theoremstyle{definition}
\newtheorem{remark}[theorem]{Remark}
\newtheorem{definition}[theorem]{Definition}
\newcommand{\R}{\mathbb R}
\numberwithin{equation}{section} \numberwithin{theorem}{section}
\newcounter{stepctr}
{\end{list}}
\def\XXint#1#2#3{{\setbox0=\hbox{$#1{#2#3}{\int}$}
     \vcenter{\hbox{$#2#3$}}\kern-.5\wd0}}
\DeclareMathOperator{\Vol}{Vol}
\DeclareMathOperator{\Area}{Area}
\DeclareMathOperator{\Length}{Length}
\DeclareMathOperator{\Sc}{R}
\DeclareMathOperator{\Ric}{Ric}
\DeclareMathOperator{\Rm}{Rm}
\DeclareMathOperator{\inj}{inj}
\DeclareMathOperator{\Sh}{S}
\DeclareMathOperator{\Hess}{Hess}
\newcommand{\e}{\varepsilon}
\newcommand{\set}[1]{\{ {#1} \}}
\newcommand{\scal}[2]{\langle {#1} , {#2} \rangle}
\title{Analysis of Type I Singularities in the Harmonic Ricci Flow}
\author{Gianmichele Di Matteo}
\begin{document}

\begin{abstract}
In \cite{end1}, Enders, M\"uller and Topping showed that any blow up sequence of a Type I Ricci flow near a singular point converges to a non-trivial gradient Ricci soliton, leading them to conclude that for such flows all reasonable definitions of singular points agree with each other. We prove the analogous result for the harmonic Ricci flow, generalizing in particular results of Guo, Huang and Phong \cite{guo} and Shi \cite{shi}. In order to obtain our result, we develop refined compactness theorems, a new pseudolocality theorem, and a notion of reduced length and volume based at the singular time for the harmonic Ricci flow.
\end{abstract}

\maketitle

\section{Introduction}
Geometric flows constitute a general procedure of transforming a given object into an equivalent one with more symmetry or regularity properties. They first appeared in $1964$ in \cite{eel}, where Eels and Sampson introduced the harmonic map flow to solve the homotopy problem in non-positively curved manifolds. Another remarkable example is provided by the solution of the Poincar\'e and Thurston Geometrization Conjectures by Perelman in \cite{per} and \cite{per2} using Ricci flow. In general, geometric flows may develop singularities which may prevent them from being continued in any reasonable way; in these cases, one may hope to understand the geometry at the singular time well enough to be able to modify the object in consideration without losing any information, and then to start the flow again. It is therefore natural to study the singularity formation of these kind of flows.

In \cite{mul}, M\"uller introduced the harmonic Ricci flow, a coupling between the two flows mentioned above, developed for it short and long time existence theory and monotonicity formulas for entropy functionals, reduced distance and volume, and a non-collapsing theorem reminiscent of Perelman's tools for the Ricci flow introduced in \cite{per}. 

Suppose we are given two Riemannian manifolds $(M^n,g)$, $(N^k,\gamma)$, with $N$ closed, and a non-negative \textit{coupling function} $\alpha(t)$. We say that a smooth family $(g(t),\phi(t))$ of Riemannian metrics $g(t)$ on $M$ and smooth maps $\phi(t)$ from $M$ to $N$ provides a solution to the harmonic Ricci flow if it satisfies
\begin{equation}
\begin{cases}
\frac{\partial g(t)}{\partial t}=-2 \Ric_{g(t)}+2\alpha(t) \nabla \phi(t) \otimes \nabla \phi(t), \\
\frac{\partial \phi(t)}{\partial t}=\tau_{g(t)} \phi(t). 
\end{cases}
\label{hrf}
\end{equation}
Here we have denoted by $\tau_g \phi= \Delta_g \phi-A(\phi)g(\nabla \phi, \nabla \phi)$ the so-called \emph{tension field} of $\phi$, where $A$ is the second fundamental form of $N$, viewed isometrically embedded into $\R^d$ via Nash's embedding theorem. In the special case where $\phi(t)$ is a real valued function, this flow reduces to List's flow introduced in \cite{lis}. For convenience of the reader, we introduce the $(0,2)-$tensor $\mathcal{S} \coloneqq \Ric-\alpha \nabla \phi \otimes \nabla \phi$, locally denoted $S_{ij}$, and its trace $\Sh \coloneqq \Sc-\alpha |\nabla \phi|^2$.

A priori, a solution to (\ref{hrf}) can develop a singularity either in the metric component or the map component only or in both components simultaneously. Surprisingly, it turns out that the metric component dominates the map component and certain types of singularities can be ruled out. In particular, for sufficiently large coupling functions $\alpha(t)$, singularities in the map component can be excluded, see Proposition 5.6 in \cite{mul}. (In fact, singularity formation in finite time for a volume-normalised version of (\ref{hrf}) can be excluded completely for large $\alpha(t)$ if the domain manifold $M$ is two dimensional, see \cite{buz}.) More generally, as long as $\alpha(t)$ is bounded away from zero, the map component cannot become singular without the metric component also developing a singularity. That is, the coupled system has a singularity at a finite time $T$ if and only if the curvature blows up at $T$, see Corollary 5.3 in \cite{mul}. These results should be compared with our results in Section \ref{typeItypeA}. 

\noindent In the present paper we are dealing with Type I harmonic Ricci flows, which we define as follows.
\begin{definition}
We say that a solution $(g(t),\phi(t))$ of (\ref{hrf}) on $[0,T)$, $T<\infty$, is a \textit{Type I harmonic Ricci flow} if there exists a constant $C>0$ such that
\begin{equation}
|\Rm|(x,t) \le \frac{C}{(T-t)}.
\end{equation}
\end{definition}
\noindent In \cite{shi}, Shi gave a definition of Type I harmonic Ricci flow in terms of the so-called AC curvature, which depends both on the evolving metric and the map. Our definition is different and depends only on the metric component of the flow, in agreement with the remarks above.

The main reason for introducing this concept resides in the fact that if the flow develops a finite time singularity, this is forming as fast as possible, implying heuristically an easier blow up analysis; these singularities are called Type I singularities.

In Ricci flow theory, Type I Ricci flows are very well studied. The work most relevant for us is \cite{end1}, where Enders, M\"uller and Topping proved self-similarity and non-triviality of the limits of blow-up sequences at singular space-time points for these flows, confirming a result conjectured by Hamilton in \cite{ham1}. The first positive results in this direction were made by Sesum in \cite{ses} in the case where the blow-up limit is compact and, in the general case, blow-up to a gradient shrinking soliton was proved by Naber \cite{nab}.
Moreover, Enders, M\"uller and Topping also showed the equivalence of every natural definition of singular point for Type I Ricci flows. Their methods of proof rely upon Perelman's pseudolocality theorem \cite{per} and an extension of Perelman's reduced length, obtained independently by Enders \cite{end} and Naber \cite{nab}. This extension has the crucial property of being based at the singular time, so one can use it to analyze the blow up limit at a singular point, forcing the limit to be a gradient shrinking soliton in canonical form. The former is then used to prevent the trivial limit to occur in case the blow up is done at a singular point. The other results stated are also consequences of the pseudolocality theorem. It is worth mentioning another approach to this classification, based on the entropy functional, is developed in \cite{man} by Mantegazza and M\"uller.

The aim of our work is to prove an analogue of these results for the harmonic Ricci flow. Our main result is the following.
\begin{theorem}
Let $(M,g(t),\phi(t),p)$ be a complete pointed Type I harmonic Ricci flow on $[0,T)$, $T < \infty$, with non-increasing coupling function $\alpha(t) \in [\underline{\alpha},\overline{\alpha}]$, where $0<\underline{\alpha}\le \overline{\alpha}<\infty$. Then for any given sequence $\lambda_j \nearrow +\infty$, and any point $p \in M$, the sequence $(M,g_j(t),\phi_j(t),p)$ of harmonic Ricci flows, where $g_j(t) \coloneqq \lambda_j g(T+\frac{t}{\lambda_j})$, $\phi_j(t) \coloneqq \phi(T+\frac{t}{\lambda_j})$ on $[-\lambda_j T,0)$, with coupling functions $\alpha_j(t)=\alpha(T+\frac{t}{\lambda_j})$, admits a subsequence converging in the pointed Cheeger-Gromov sense to a normalized gradient shrinking harmonic Ricci soliton solution in canonical form $(M_{\infty},g_{\infty}(t),\phi_{\infty}(t),p_{\infty})$, $t \in (-\infty,0)$, with constant coupling function $\lim_{t\nearrow T}{\alpha(t)}$. Moreover, any subsequential limit is of this form. Finally, if the point $p \in \Sigma_I$ is a Type I singular point (see Definition \ref{definitiontypeIsing}), then the limit is non-trivial, i.e. not locally isometric to the Gaussian soliton.
\label{nontrivial}
\end{theorem}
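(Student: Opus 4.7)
The plan is to follow the strategy of Enders--M\"uller--Topping \cite{end1}, substituting each Ricci-flow tool by its harmonic Ricci flow counterpart developed earlier in this paper. First, a compactness step: the Type I bound rescales to $|\Rm_{g_j}|(x,t) \le C/|t|$ on each $(M, g_j(t))$, uniform in $j$. Coupled with $\alpha_j \in [\underline{\alpha}, \overline{\alpha}]$ and the fact that singular behaviour is detected by the curvature (Corollary 5.3 of \cite{mul}), one obtains uniform local bounds on $|\nabla \phi_j|$ on parabolic regions. Applying the refined compactness theorem announced in the abstract then produces a subsequence converging in the pointed Cheeger--Gromov sense on $(-\infty, 0)$ to a complete harmonic Ricci flow $(M_\infty, g_\infty(t), \phi_\infty(t), p_\infty)$ with constant coupling $\alpha_\infty \coloneqq \lim_{t \nearrow T} \alpha(t)$, the limit existing because $\alpha$ is monotone and bounded.

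Second, I would prove self-similarity of the limit using a reduced length and volume based at the singular space-time point $(p,T)$. Starting from M\"uller's $\mathcal{L}$-geometry for harmonic Ricci flow, and imitating Enders \cite{end} and Naber \cite{nab} in the Ricci-flow case, I would define $\ell_{(p,T)}$ as a limit of the standard reduced lengths $\ell_{(q_k, T-\tau_k)}$ along $\tau_k \searrow 0$ for a suitable sequence $q_k \to p$; the Type I hypothesis supplies the uniform estimates on $\mathcal{L}$-geodesics that make this limit well-defined, and yields that the associated reduced volume $\widetilde V_{(p,T)}(\tau)$ is bounded and monotone nonincreasing in $\tau$. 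Pulling $\widetilde V$ back to the rescaled sequence and letting $\tau \to 0^+$ shows that it is constant in $\tau$ on the limit flow. The equality case of M\"uller's monotonicity formula then forces both
\begin{equation*}
\mathcal{S}_{g_\infty} + \Hess_{g_\infty} f = \tfrac{1}{2|t|} g_\infty, \qquad \tau_{g_\infty} \phi_\infty = \alpha_\infty \scal{\nabla f}{\nabla \phi_\infty},
\end{equation*}
and the potential $f$, being inherited from $\ell_{(p,T)}$, automatically satisfies Perelman's auxiliary normalisation, placing the soliton in canonical form.

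Finally, for non-triviality at a Type I singular point $p \in \Sigma_I$, I would argue by contradiction using the new pseudolocality theorem. If some subsequential limit were locally isometric to the Gaussian soliton near $p_\infty$ at time $-1$, then for every $\e > 0$ and all $j$ large the rescaled flows would have curvature and energy density below $\e$ on a fixed parabolic neighborhood of $(p,-1)$; unscaled, this says that $(g,\phi)$ has small curvature and energy on a parabolic neighborhood of $(p, T - 1/\lambda_j)$. The new pseudolocality theorem developed earlier in the paper then propagates this smallness forward to times arbitrarily close to $T$, contradicting $p \in \Sigma_I$ by the characterisation of Type I singular points in Definition \ref{definitiontypeIsing}.

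The main obstacle, in my view, is the second step: constructing a reduced length based at the singular time for the coupled flow, verifying its upper-semicontinuity under Cheeger--Gromov convergence, and establishing the rigidity of the equality case in the reduced-volume monotonicity. The time dependence of $\alpha$ and the $\alpha \nabla \phi \otimes \nabla \phi$ correction in the $\mathcal{L}$-geometry introduce extra terms in the first and second variation formulae; one must check that monotonicity of $\alpha$ preserves the correct sign in the monotonicity of $\widetilde V$, and that the rigidity argument recovers the full coupled soliton system rather than only its metric component.
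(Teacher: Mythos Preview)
Your overall strategy matches the paper's, but there is one structural gap in the compactness step. You invoke the refined compactness theorem directly after obtaining the curvature bound $|\Rm_{g_j}|\le C/|t|$, but every version of that theorem in this paper requires a uniform lower bound on $\inj(M,g_j(-1),p)$, which you do not address. The Type~I curvature bound alone does not supply it. In the paper this bound is obtained \emph{from} the singular-time reduced length: one first constructs $l_{p,T}$ (Theorem~\ref{redlensingtime}), notes that its rescalings $l^j_{p,0}(q,\bar t)=l_{p,T}(q,T+\bar t/\lambda_j)$ are reduced lengths for the rescaled flows, and observes that the associated reduced volumes satisfy $V^j_{p,0}(-1)=V_{p,T}(T-1/\lambda_j)\to\lim_{t\nearrow T}V_{p,T}(t)\in(0,1]$ by monotonicity. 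This uniform positive lower bound feeds into M\"uller's $\kappa$--non-collapsing theorem, which then yields the injectivity radius bound. So your steps~1 and~2 are not sequential but intertwined: the reduced-length machinery must be in place \emph{before} compactness can be applied, not after.

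Two minor corrections. First, the second soliton equation is $\tau_{g_\infty}\phi_\infty=\langle\nabla\phi_\infty,\nabla f\rangle$, with no factor $\alpha_\infty$ on the right; compare~(\ref{solitoncanonical}). Second, the bounds on $|\nabla\phi_j|$ and $|\nabla^2\phi_j|$ come from Theorem~\ref{typerhrf} (the Type~I bound on $|\Rm|$ forces the same rate on the map quantities), rather than from Corollary~5.3 of~\cite{mul}, which is a blow-up statement.
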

This theorem generalizes Theorem $4.6$ in \cite{shi}, where Shi supposed that the limit above is a closed manifold, and Theorem $2$ in \cite{guo}, in which the authors considered the case $N=\R$, so List's flow.

In order to prove Theorem \ref{nontrivial}, we choose to adapt the strategy from the Ricci flow case outlined above, that is to introduce a reduced distance based at a singular time for the harmonic Ricci flow and to show a pseudolocality theorem. These two results might be of independent interest. Firstly, let us recall the definition of reduced length based at a regular time.
\begin{definition}[Definition 1.2 in \cite{mul1}]
Let $(g(t),\phi(t))$ be a maximal harmonic Ricci flow defined on $[0,T)$. For any $0\le \bar{t}<t_0 <T$, we define the \textit{$\mathcal{L}$-length} of a curve $\gamma \colon [\bar{t},t_0] \longrightarrow M$ as
\begin{equation}
\mathcal{L}(\gamma) \coloneqq \int_{\bar{t}}^{t_0}{\sqrt{t_0-t}\big(|\dot{\gamma}(t)|^2_{g(t)}+\Sh_{g(t)}(\gamma(t)) \big) dt}.
\end{equation}
For a fixed space-time point $(p,t_0)\in M\times[0,T)$, we also define the functions $l_{p,t_0},L_{p,t_0} \colon M \times (0,t_0) \longrightarrow \R$ as
\begin{equation}
l_{p,t_0}(q,\bar{t}) \coloneqq \inf_{\gamma}{\frac{1}{2 \sqrt{t_0-\bar{t}}} \mathcal{L}(\gamma)} \eqqcolon \frac{1}{2 \sqrt{t_0-\bar{t}}} L_{p,t_0}(q,\bar{t}),
\end{equation}
where $\gamma$ ranges in the set of curves such that $\gamma(\bar{t})=q$ and $\gamma(t_0)=p$. The function $l_{p,t_0}$ is called \textit{reduced length based at $(p,t_0)$}.
\label{definitionreducedlength}
\end{definition}
In order to carry out our intended programme for the harmonic Ricci flow, we need to extend this definition to a singular space-time base point $(p,T)$.
\begin{theorem}
Let $(g(t),\phi(t))$ be a complete Type I harmonic Ricci flow on $[0,T)$, $T<\infty$. Fix $t_i \nearrow T$ and a point $p \in M$. Then there exists a locally Lipschitz function
\begin{equation}
l_{p,T} \colon M \times (0,T) \rightarrow \R,
\end{equation}
which is a subsequential limit of the reduced lengths $l_{p,t_i}$ in $C^{0,1}_{loc}$. Moreover, defining 
\begin{equation}
v_{p,T}(q,\bar{t}) \coloneqq (4\pi(T-\bar{t}))^{-\frac{n}{2}}e^{-l_{p,T}(q,\bar{t})},
\end{equation}
these functions verify in the sense of distributions the following inequalities:
\begin{equation}
\begin{dcases*}
-\frac{\partial l_{p,T}(q,\bar{t})}{\partial \bar{t}}-\Delta l_{p,T}(q,\bar{t})+|\nabla l_{p,T}(q,\bar{t})|^2-\Sh_{g(\bar{t})}+ \frac{n}{2(T-\bar{t})} \ge 0 \iff \square^*v_{p,T}(q,\bar{t}) \le 0\\
-|\nabla l_{p,T}(q,\bar{t})|^2+\Sh_{g(\bar{t})}+ \frac{l_{p,T}(q,\bar{t})-n}{(T-\bar{t})}+2\Delta l_{p,T}(q,\bar{t}) \le 0\\
-2\frac{\partial l_{p,T}(q,\bar{t})}{\partial \bar{t}}+|\nabla l_{p,T}(q,\bar{t})|^2-\Sh_{g(\bar{t})}+\frac{l_{p,T}(q,\bar{t})}{(T-\bar{t})}=0.
\end{dcases*}
\end{equation}
\label{redlensingtime}
\end{theorem}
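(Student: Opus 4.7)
The plan is to follow the classical strategy developed by Enders, M\"uller and Topping for Type I Ricci flows \cite{end1} and by Naber \cite{nab}, transplanting it to the harmonic Ricci flow via M\"uller's $\mathcal{L}$-geometry from \cite{mul1}. The argument proceeds in three phases: first, establish locally uniform $L^{\infty}$ bounds on $l_{p,t_i}$ and its first derivatives on compact subsets of $M \times (0,T)$; second, extract a $C^{0,1}_{loc}$ subsequential limit via Arzel\`a--Ascoli; and third, pass the three relations of the statement to the limit in the distributional sense.

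For the first phase, fix a compact cylinder $K \times [a,b] \subset M \times (0,T)$ and, without loss of generality, assume $t_i > b$. Under Type I curvature bounds together with $\alpha \in [\underline\alpha,\overline\alpha]$, I would first derive the induced estimate $|\Sh| \le C/(T-t)$, noting that $\alpha|\nabla\phi|^2$ is controlled by curvature via a maximum-principle argument on its evolution equation (implicit in M\"uller's work and guaranteed by the hypothesis $\underline\alpha > 0$). For the upper bound on $l_{p,t_i}$, I construct a test curve stationary at $q$ on $[\bar t, t_i - \delta]$, then joining $q$ to $p$ at $t = t_i$; the stationary contribution is controlled by the elementary inequality $\int_{\bar t}^{t_i}\sqrt{t_i-t}/(T-t)\,dt \le 2\sqrt{T-\bar t}$, while the Type I bound $|\partial_t g| \le C/(T-t)$ controls the metric distortion on the joining piece, giving $l_{p,t_i}(q,\bar t) \le C(K,a,b)$. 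For the lower bound, the same $\Sh$ estimate together with non-negativity of $|\dot\gamma|^2_{g(t)}$ yields $L_{p,t_i}(q,\bar t) \ge -C'\sqrt{T-\bar t}$; dividing by $2\sqrt{t_i - \bar t} \ge 2\sqrt{T-b}$ produces a uniform lower bound.

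For the Lipschitz bounds, I leverage the three pointwise relations already established by M\"uller for $l_{p,t_i}$ at the regular time $t_i$, supplemented by an $\mathcal{L}$-gradient estimate of the form $|\nabla l_{p,t_i}|^2 \le -\Sh + l_{p,t_i}/(t_i - \bar t) + C$ obtained by tracing through the first-variation analysis of $L_{p,t_i}$ in \cite{mul1}. The Type I bound on $\Sh$, the uniform two-sided bound on $l_{p,t_i}$ from the previous step, and the fact that $t_i - \bar t$ stays bounded away from zero on our compact cylinder then yield $|\nabla l_{p,t_i}|^2 \le C''(K,a,b)$; identity (iii) of the statement converts this into a pointwise bound on $|\partial_{\bar t} l_{p,t_i}|$. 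A diagonal Arzel\`a--Ascoli extraction now produces a subsequence with $l_{p,t_i} \to l_{p,T}$ in $C^{0,\alpha}_{loc}$ for every $\alpha < 1$, with $l_{p,T} \in C^{0,1}_{loc}$ and weak-$\star$ $L^{\infty}_{loc}$ convergence of its spatial gradient and time derivative.

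Passing the relations to the limit is then routine: each inequality and the identity hold distributionally for $l_{p,t_i}$, i.e.\ after pairing against a non-negative $\varphi \in C_c^{\infty}(M \times (0,T))$ and integrating by parts to move all derivatives onto $\varphi$; the $C^0_{loc}$ convergence of $l_{p,t_i}$, combined with the uniform convergence $1/(t_i-\bar t) \to 1/(T-\bar t)$ on the compact support of $\varphi$ and the fact that $g(\bar t)$ and $\Sh_{g(\bar t)}$ do not depend on $i$, allows passage to the limit; the same procedure works for the reformulation in terms of $v_{p,T}$ upon noting that the exponential in $v_{p,t_i}$ converges uniformly on compacts. The main obstacle I anticipate is the uniform gradient bound on $l_{p,t_i}$: one must unwind M\"uller's $\mathcal{L}$-variation formulas to verify that every geometric term appearing in the pointwise estimate is dominated by the Type I assumption together with the monotonicity and positivity of $\alpha(t)$, in a manner uniform as $t_i \nearrow T$. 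Once this gradient bound is secured, both the upper/lower bounds on $l_{p,t_i}$ and the passage to the distributional limit become essentially formal.
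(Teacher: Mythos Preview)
Your overall strategy coincides with the paper's: uniform $C^0$ and Lipschitz bounds on $l_{p,t_i}$ on compact subsets of $M\times(0,T)$, Arzel\`a--Ascoli, then passage to the limit of the three relations (which the paper, like you, defers to the Ricci flow argument in \cite{end}).

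There is, however, a genuine flaw in your test-curve construction for the upper bound. By placing the joining segment on $[t_i-\delta,t_i]$, you are forced to estimate $|\dot\gamma|^2_{g(t)}$ for $t$ arbitrarily close to $T$ as $i\to\infty$. The Type I bound only gives $g(t)\le (T/(T-t))^{C}g(0)$, so the kinetic term picks up a factor comparable to $(T-t_i)^{-C}$, and your bound blows up. The paper avoids this by reversing the order: it fixes $k\in(b,T)$ once and for all, joins $q$ to $p$ along a $g(0)$-geodesic reparametrised over $[\bar t,k]$ (where all the metrics $g(t)$ are uniformly equivalent, independently of $i$), and then sits at $p$ on $[k,t_i]$. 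This is an easy fix, but your version as written does not give a uniform upper bound.

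Your approach to the gradient bound is also different in execution. The inequality you invoke, $|\nabla l_{p,t_i}|^2\le -\Sh+l_{p,t_i}/(t_i-\bar t)+C$, does not appear in \cite{mul1} in that form and cannot be read off the three relations directly (they involve $\partial_{\bar t}l$ and $\Delta l$, which are not yet controlled). The paper instead works with the $\mathcal{L}$-geodesic equation: setting $V_i(t)=\sqrt{t_i-t}\,\dot\gamma_i(t)$ along a minimising $\mathcal{L}$-geodesic, it derives an ODE of the form
\[
\frac{d}{dt}|V_i|^2_{g(t)}\le \frac{C}{T-t}\,|V_i|^2_{g(t)}+\frac{C}{(T-t)}\,|V_i|_{g(t)},
\]
which requires not only $|\Sh|\le C/(T-t)$ but also $|\mathcal S|\le C/(T-t)$ and the derivative estimate $|\nabla\Sh|\le C(T-t)^{-3/2}$ coming from Theorem~\ref{typerhrf}. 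It then locates a time $\hat t_i\in[\bar t,k]$ at which $|V_i|$ is controlled, via the integral mean value theorem applied to $\int (t_i-t)^{-1/2}|V_i|^2\,dt\le \mathcal L(\gamma_i)+C$, and integrates the ODE from there. The first variation formula $\nabla L_i(q,\bar t)=-2V_i(\bar t)$ then yields the spatial Lipschitz bound. This is precisely the ``unwinding'' you correctly anticipate as the main obstacle, but note that the Type I control on $\nabla\Sh$ (not just $\Sh$) is essential and is absent from your outline.
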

After obtaining the proper compactness theorems for sequences of harmonic Ricci flows, we show the following pseudolocality theorem: 
\begin{theorem}
For every $\beta>0$, $N$, $\underline{\alpha}$ and $\overline{\alpha}$, where $0<\underline{\alpha} \le \overline{\alpha}<\infty$, there exist $\delta,\e>0$ with the following
property. Suppose that we have a pointed complete harmonic Ricci flow $(M,g(t),\phi(t), p)$ defined for $t \in [0,(\e r_0)^2]$, with non-increasing coupling function $\alpha(t) \in [\underline{\alpha},\overline{\alpha}]$. Suppose the following:
\begin{itemize}
\item $\Sh(0) \ge -r_0^2$ on $B_{g(0)}(p,r_0)$;
\item $\Area_{g(0)}(\partial \Omega)^n \ge (1-\delta)c_n \Vol_{g(0)}(\Omega)^{n-1}$, for any $\Omega \subset B_{g(0)}(p,r_0)$, where $c_n$ is the Euclidean isoperimetric constant.
\end{itemize}
Then $|\Rm|(x,t)< \beta t^{-1} + (\e r_0)^{-2}$ whenever $0<t\le (\e r_0)^2$ and $d_{g(t)}(x,p)\le \e r_0$.
\label{pseudo}
\end{theorem}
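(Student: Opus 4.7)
The strategy is to adapt Perelman's proof of pseudolocality \cite{per} to the harmonic Ricci flow by replacing Perelman's $\mathcal{W}$-entropy with M\"uller's harmonic Ricci flow entropy from \cite{mul}, and argue by contradiction. Suppose the conclusion fails for some $\beta, N, \underline\alpha, \overline\alpha$; then there exist sequences $\delta_j, \e_j \searrow 0$, radii $r_{0,j}>0$, and pointed harmonic Ricci flows $(M_j, g_j(t), \phi_j(t), p_j)$ with non-increasing couplings $\alpha_j(t) \in [\underline\alpha, \overline\alpha]$ satisfying both hypotheses but such that the curvature estimate is violated at some $(x_j, t_j)$ with $d_{g_j(t_j)}(x_j, p_j) \le \e_j r_{0,j}$ and $t_j \in (0, (\e_j r_{0,j})^2]$. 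A parabolic rescaling reduces the situation to $r_{0,j} = 1$.

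First I would run a Perelman-type point-picking argument, producing almost-worst curvature points $(\bar x_j, \bar t_j)$ with $Q_j := |\Rm|(\bar x_j, \bar t_j) \ge \beta \bar t_j^{-1}$ and such that $|\Rm| \le 4 Q_j$ on a parabolic neighborhood of $(\bar x_j, \bar t_j)$ of scale $Q_j^{-1/2}$. Rescaling by $Q_j$ and translating $\bar t_j$ to $0$ yields harmonic Ricci flows with uniformly bounded curvature on intervals expanding to $(-\infty, 0]$. By the refined Cheeger--Gromov compactness theorems for harmonic Ricci flows established earlier in this paper, a subsequence converges in the pointed smooth sense to a complete non-flat ancient harmonic Ricci flow $(M_\infty, g_\infty(t), \phi_\infty(t), \bar x_\infty)$ with a constant coupling $\alpha_\infty \in [\underline\alpha, \overline\alpha]$ (a bounded, non-increasing sequence becomes constant in the limit), normalized so that $|\Rm_\infty|(\bar x_\infty, 0) = 1$.

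Next, for each $j$ I fix the conjugate heat kernel $u_j(x,t) = (4\pi(\bar t_j - t))^{-n/2} e^{-f_j(x,t)}$ based at $(\bar x_j, \bar t_j)$ and consider M\"uller's entropy $\mathcal{W}_j(t) := \mathcal{W}(g_j(t), \phi_j(t), f_j(t), \bar t_j - t)$, which is non-decreasing in $t$ along the coupled harmonic Ricci flow and conjugate heat equation. The almost-Euclidean isoperimetric assumption yields a near-Euclidean logarithmic Sobolev inequality on $B_{g_j(0)}(p_j, 1)$. Since $\bar t_j \le \e_j^2$, standard Gaussian-type upper bounds localize $u_j(\cdot, 0)$ essentially in this ball; combined with $\Sh_j(0) \ge -1$---which already incorporates the harmonic energy since $\Sh = \Sc - \alpha|\nabla\phi|^2$---this gives $\mathcal{W}_j(0) \ge -\eta_j$ with $\eta_j \to 0$. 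Monotonicity propagates this bound up to time $\bar t_j - cQ_j^{-1}$ for a small $c>0$; passing to the blow-up limit, the rescaled entropies converge to M\"uller's $\mathcal{W}$-entropy of the limit flow evaluated on a limit conjugate heat kernel, and the harmonic Ricci flow analogue of Perelman's rigidity for $\mathcal{W}$ forces $g_\infty$ to be Euclidean and $\phi_\infty$ constant---contradicting $|\Rm_\infty|(\bar x_\infty, 0) = 1$.

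The principal obstacle is the interplay between the map component and the entropy. The key observation making the argument work is that the hypothesis $\Sh_j(0) \ge -1$ controls exactly the combination appearing in M\"uller's $\mathcal{W}$-entropy, so no separate control of $|\nabla\phi_j|$ at the initial time is required---even though $\Sc_j$ and $\alpha_j|\nabla\phi_j|^2$ are not individually bounded. A further subtlety is the higher-derivative compactness of the rescaled maps $\phi_j$ needed to pass $\mathcal{W}_j$ to the limit; this is handled by the refined compactness theorems and Bernstein-type estimates for harmonic Ricci flow developed earlier in the paper, applied on the curvature-bounded parabolic neighborhood provided by point-picking. One must also verify the entropy rigidity in the harmonic Ricci flow setting---that equality in M\"uller's logarithmic Sobolev inequality forces $(g,\phi)$ to be Gaussian with constant map---which follows by adapting Perelman's argument and exploiting $\alpha|\nabla\phi|^2 \ge 0$ to preserve the equality structure.
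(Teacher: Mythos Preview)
Your overall strategy---contradiction, point-picking, conjugate heat kernel, and rigidity in a blow-up limit---matches the paper's route (following Perelman and \cite{kle}, \cite{guo}). However, there is a genuine gap at the compactness step. After rescaling by $Q_j$ you invoke the compactness theorems from Section~\ref{compactness}, but those theorems require a uniform lower bound on the injectivity radius at the basepoint (hypothesis (ii) in Theorem~\ref{compact1}), which you have not verified. Pseudolocality assumes no a~priori non-collapsing, so this bound is not automatic: the injectivity radii of $Q_j g_j(\bar t_j)$ at $\bar x_j$ may well tend to zero. The paper resolves this via a dichotomy inside its gap lemma (Lemma~\ref{gaplemma}): if the injectivity radii stay bounded below, one passes to a limit with $|\Rm|(\bar x_\infty,0)=1$ and uses Theorem~\ref{conjheatlocvol} together with the Zhang-type rigidity of Theorem~\ref{rigidity} to force the Gaussian soliton; if instead they collapse, one rescales further by the injectivity radius to obtain a flat limit with $\inj=1$, and the same soliton rigidity forces $\inj=\infty$, again a contradiction. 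Your argument needs this dichotomy (or an independent non-collapsing bound) before any limit can be taken.

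Two smaller points. First, point-picking yields a backward parabolic region of temporal extent $\tfrac{1}{2}\beta Q_j^{-1}$, so after rescaling by $Q_j$ the time interval is $[-\tfrac{1}{2}\beta,0]$, not one expanding to $(-\infty,0]$; the limit is not ancient. Second, the paper does not pass the global entropy $\mathcal{W}_j$ to the limit. Instead it works with the localised integrand $v_j \le 0$ (non-positivity coming from the Harnack inequality of Theorem~\ref{plyhinequality}) and establishes a uniform gap $\int_{B_j} v_j \le -b$ at an intermediate time; the final contradiction is obtained at the sequence level by propagating this gap to $t=0$ and confronting it with the near-Euclidean log-Sobolev inequality furnished by the isoperimetric hypothesis. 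Your more direct route via convergence of $\mathcal{W}_j$ is in principle viable once the injectivity-radius issue is settled, but it would additionally require justifying that these non-compact integrals pass to the limit.
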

This article is organized as follows. In Section \ref{results}, we first recall a few basic facts about the harmonic Ricci flow, then we develop a compactness theory for the flow and we conclude the section introducing the concept of Type I harmonic Ricci flow. In Section \ref{sss} the concepts of self-similar solution and harmonic Ricci soliton are recalled, analyzed and linked. The reduced length and volume based at singular time are studied in Section \ref{srl}, where Theorem \ref{redlensingtime} is proved. We prove Theorem \ref{pseudo} in Section \ref{sp}. Finally, in Section \ref{smt} we bring all these results together to prove our main result, Theorem \ref{nontrivial}.

Throughout the rest of the paper we adopt the convention that any connection symbol $\nabla$,  norm and tensor is computed with respect to the evolving metric $g(t)$ unless otherwise stated. Also, $C$ is a generic constant possibly changing from line to line.
\section*{Acknowledgements}
\noindent I would like to express my gratitude to my supervisor Reto Buzano for introducing me to the subject, for his patient guidance and helpful suggestions.

\section{Preliminary Results on Harmonic Ricci Flow}
\label{results}
In this section, we briefly review some definitions and tools from harmonic Ricci flow theory developed by M\"uller in \cite{mul} before developing distance distortion estimates and compactness theorems, and prove some preliminary results about Type I flows. Regarding all the results in this section, we would like to stress the dominance of the geometric component of the flow over the map one.

\noindent M\"uller computed the evolution equations for curvatures, the tensor $\mathcal{S}$, their derivatives and derivatives of the map along the harmonic Ricci flow (\ref{hrf}), we recall here only the ones relevant for our scope. 
\begin{lemma}[Corollary $2.6$ in \cite{mul}]
Given a solution $(g(t),\phi(t))$ of (\ref{hrf}) we have
\begin{align}
\frac{\partial \Sh}{\partial t} &= \Delta \Sh +2|S_{ij}|^2+2\alpha |\tau_g \phi|^2-\dot{\alpha} |\nabla \phi|^2, \\
\frac{\partial S_{ij}}{\partial t} &=\Delta_L S_{ij}+2\alpha\tau_g \phi \nabla_i \nabla_j \phi-\dot{\alpha} \nabla_i \phi \nabla_j \phi,
\label{Sevolution}
\end{align}
where $\Delta_L$ is the Lichnerowicz Laplacian, defined for symmetric $(0,2)-$tensors $t_{ij}$ by the formula 
\begin{equation*}
\Delta_L t_{ij}\coloneqq \Delta t_{ij}+2R_{ipjq}t_{pq}-R_{ip}t_{pj}-R_{jp}t_{ip}.
\end{equation*}
\end{lemma}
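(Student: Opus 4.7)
My plan is to derive both identities from the general first-variation formula for the Ricci tensor under a metric flow $\partial_t g_{ij} = v_{ij}$, namely
\begin{equation*}
\partial_t R_{ij} \;=\; -\tfrac12 \Delta_L v_{ij} + \tfrac12\bigl(\nabla_i (\delta v)_j + \nabla_j (\delta v)_i\bigr) - \tfrac12 \nabla_i\nabla_j (\operatorname{tr} v),
\end{equation*}
where $(\delta v)_j \coloneqq \nabla^k v_{kj}$. A sanity check against pure Ricci flow ($v=-2\Ric$, combined with the contracted second Bianchi identity $\nabla^i R_{ij} = \tfrac12 \nabla_j R$) recovers $\partial_t R_{ij} = \Delta_L R_{ij}$; specialising instead to $v_{ij} = -2 S_{ij}$ yields
\begin{equation*}
\partial_t R_{ij} = \Delta_L S_{ij} + \nabla_i\nabla_j \Sh - \nabla_i(\delta S)_j - \nabla_j(\delta S)_i.
\end{equation*}
The first key computation is then to simplify $(\delta S)_j$. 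Using Nash's isometric embedding $N\hookrightarrow \R^d$, I would compute $\nabla^i(\nabla_i\phi^A \nabla_j\phi^A) = \Delta_g\phi^A \nabla_j\phi^A + \tfrac12 \nabla_j|\nabla\phi|^2$, and observe that since $\Delta_g\phi = \tau_g\phi + A(\phi)(\nabla\phi,\nabla\phi)$ with $A(\phi)(\nabla\phi,\nabla\phi) \perp T_\phi N \ni \nabla_j\phi$, the first term is just $\tau_g\phi\cdot\nabla_j\phi$. Combined with Bianchi this gives $(\delta S)_j = \tfrac12 \nabla_j \Sh - \alpha\,\tau_g\phi\cdot\nabla_j\phi$, so that the $\nabla_i\nabla_j\Sh$ contributions cancel and
\begin{equation*}
\partial_t R_{ij} = \Delta_L S_{ij} + \alpha\bigl(\nabla_i(\tau_g\phi\cdot\nabla_j\phi) + \nabla_j(\tau_g\phi\cdot\nabla_i\phi)\bigr).
\end{equation*}

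Next I would compute $\partial_t(\alpha\nabla_i\phi\cdot\nabla_j\phi)$ directly. In the embedded picture, $\partial_t$ commutes with spatial partial derivatives on the $\R^d$-valued map $\phi$, and $\partial_t\phi^A = \tau_g\phi^A$ from the flow equation, so
\begin{equation*}
\partial_t(\alpha\nabla_i\phi\cdot\nabla_j\phi) = \dot\alpha\,\nabla_i\phi\cdot\nabla_j\phi + \alpha\bigl(\nabla_i\tau_g\phi\cdot\nabla_j\phi + \nabla_j\tau_g\phi\cdot\nabla_i\phi\bigr).
\end{equation*}
Expanding $\nabla_i(\tau_g\phi\cdot\nabla_j\phi) = \nabla_i\tau_g\phi\cdot\nabla_j\phi + \tau_g\phi\cdot\nabla_i\nabla_j\phi$ and symmetrising, then subtracting from the expression for $\partial_t R_{ij}$ above, the $\nabla_i\tau_g\phi\cdot\nabla_j\phi$ pieces occurring in both lines cancel, leaving exactly $2\alpha\,\tau_g\phi\cdot\nabla_i\nabla_j\phi$ together with the $-\dot\alpha\,\nabla_i\phi\nabla_j\phi$ term, which is the claimed formula for $\partial_t S_{ij}$.

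Finally, to derive the evolution of $\Sh = g^{ij}S_{ij}$ I would trace with $g^{ij}$, accounting for $\partial_t g^{ij} = 2 S^{ij}$ to produce the $2|S_{ij}|^2$ term. The trace of the Lichnerowicz Laplacian collapses to the ordinary Laplacian (the $R_{ipjq}$ and $R_{ip}$ contributions in $\Delta_L$ contract to offsetting $\Ric$--$S$ pairings), so $g^{ij}\Delta_L S_{ij} = \Delta \Sh$; and by the same normal/tangent orthogonality used above, $g^{ij}\tau_g\phi\cdot\nabla_i\nabla_j\phi = \tau_g\phi\cdot\Delta_g\phi = |\tau_g\phi|^2$. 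Combined with the immediate $-\dot\alpha|\nabla\phi|^2$ from tracing the last term, this yields the stated evolution of $\Sh$. I expect the main obstacle to be tracking $(\delta S)_j$ and verifying the delicate cancellation of $\nabla_i\nabla_j\Sh$ against the divergence of the coupling tensor: this is the identity that, through the tension-field constraint, makes the final formulae collapse to their clean form, and it is where the orthogonality of $\tau_g\phi$ to the second fundamental form of the target plays its essential role.
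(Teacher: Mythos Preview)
The paper does not prove this lemma; it is quoted without argument from M\"uller's original article (Corollary~2.6 in \cite{mul}). Your derivation is correct and self-contained: the general first-variation formula for $R_{ij}$, combined with the divergence identity $(\delta S)_j = \tfrac12\nabla_j\Sh - \alpha\,\tau_g\phi\cdot\nabla_j\phi$ (the contracted second Bianchi identity modified by the coupling term), produces exactly the cancellation of the $\nabla_i\nabla_j\Sh$ pieces you anticipate. The remaining bookkeeping---subtracting $\partial_t(\alpha\nabla_i\phi\nabla_j\phi)$ and then tracing with the evolving inverse metric---is carried out correctly, including the two uses of the tangent/normal orthogonality $A(\phi)(\nabla\phi,\nabla\phi)\perp\tau_g\phi,\nabla_j\phi$ in the divergence step and in the identification $\tau_g\phi\cdot\Delta_g\phi=|\tau_g\phi|^2$. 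There is nothing in the paper itself to compare your argument against, but it is the standard computation and matches what one finds in \cite{mul}.
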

Using a simple maximum principle argument, M\"uller obtained that $\Sh_{min}(t)$ is monotonically increasing, thus $\Sc=\alpha |\nabla \phi|^2+\Sh \ge \alpha(t) |\nabla \phi|^2+\Sh_{min}(0)$. A consequence of this is the following
\begin{lemma}[Corollary $2.8$ in \cite{mul}]
Suppose $(g(t),\phi(t))$ is a solution of (\ref{hrf}), with $\alpha(t) \ge \underline{\alpha}>0$. If there exist $(x_k,t_k) \subset M \times [0,T)$, with $t_k \nearrow T$ and $|\nabla \phi|^2(x_k,t_k) \rightarrow +\infty$, then also $\Sc(x_k,t_k)\rightarrow +\infty$, thus the flow becomes singular at $T_{max} \le T$.
\label{rbounds}
\end{lemma}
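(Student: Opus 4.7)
The statement follows almost directly from the material recalled immediately before it. The plan is to combine the monotonicity of $\Sh_{\min}$ with the standard curvature-blow-up continuation criterion for the harmonic Ricci flow.

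First, the pointwise lower bound $\Sh(x,t) \ge \Sh_{\min}(0)$ comes from M\"uller's maximum principle argument recalled just above the lemma (using non-negativity of $2|S_{ij}|^2 + 2\alpha|\tau_g \phi|^2$ together with $-\dot{\alpha}|\nabla \phi|^2 \ge 0$, which in turn requires $\alpha$ non-increasing, a standing convention in M\"uller's framework). Combining this with the identity $\Sh = \Sc - \alpha |\nabla \phi|^2$ and the hypothesis $\alpha(t) \ge \underline{\alpha} > 0$ yields the pointwise inequality
\[
\Sc(x,t) \;=\; \alpha(t)\,|\nabla \phi|^2(x,t) + \Sh(x,t) \;\ge\; \underline{\alpha}\,|\nabla \phi|^2(x,t) + \Sh_{\min}(0).
\]
Evaluating at the sequence $(x_k,t_k)$ with $|\nabla \phi|^2(x_k,t_k) \to +\infty$ immediately gives $\Sc(x_k,t_k) \to +\infty$, which is the first assertion.

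For the singularity conclusion, note that $|\Sc| \le c_n |\Rm|$ pointwise for a dimensional constant $c_n$ (since $\Sc$ is a full contraction of $\Rm$), so $|\Rm|(x_k,t_k) \to +\infty$ as well, and no uniform bound $\sup_{M \times [0,T)} |\Rm| < \infty$ can hold. By the standard continuation criterion for harmonic Ricci flow with coupling bounded below (Corollary 5.3 in \cite{mul}, whose hypothesis is precisely $\alpha \ge \underline{\alpha} > 0$), blow-up of $|\Rm|$ prevents smooth extension, forcing $T_{\max} \le T$. There is no substantive obstacle in the argument: the lemma is essentially a packaging of the $\Sh_{\min}$ lower bound and the Riemann-curvature blow-up criterion, and the sole delicate point is the compatibility of the monotonicity of $\Sh_{\min}$ with the assumed bounds on $\alpha$, which is transparent from the evolution equation of $\Sh$ recorded above.
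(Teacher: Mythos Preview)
Your proof is correct and follows exactly the approach indicated in the paper: the sentence immediately preceding the lemma already gives the key inequality $\Sc \ge \alpha(t)|\nabla\phi|^2 + \Sh_{\min}(0)$, from which the first assertion is immediate, and the paper provides no further argument beyond this (the lemma is cited from \cite{mul} rather than reproved). Your additional invocation of the continuation criterion to justify $T_{\max}\le T$ is the natural completion and matches the intended reasoning.
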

For later use, we recall the differential (in-)equalities satisfied by the squared norms of the gradient of $\phi$, the hessian of $\phi$ and the Riemannian tensor. Along the harmonic Ricci flow, $|\nabla \phi|^2$ evolves by
\begin{equation}
\frac{\partial |\nabla \phi|^2}{\partial t} = \Delta |\nabla \phi|^2-2\alpha |\nabla \phi \otimes \nabla \phi|^2-2 |\nabla^2 \phi|^2+2 \langle \Rm^N( \nabla_i \phi,\nabla_j \phi)\nabla_j \phi,\nabla_i \phi) \rangle.
\label{grad}
\end{equation}
Assuming $N$ to be of bounded sectional curvatures, we get the following inequalities, with constants depending only on the dimension $n$ of $M$ and on the presumed curvature bounds on N:
\begin{equation}
\frac{\partial |\Rm|^2}{\partial t} \le \Delta |\Rm|^2-2 |\nabla \Rm|^2+C |\Rm|^3+\alpha C |\nabla \phi|^2 |\Rm|^2 +\alpha C |\nabla^2 \phi|^2 |\Rm| +\alpha C |\nabla \phi|^4 |\Rm|,
\label{riem}
\end{equation}
\begin{equation}
\frac{\partial |\nabla^2 \phi|^2}{\partial t} \le \Delta |\nabla^2 \phi|^2-2 |\nabla^3 \phi|^2+C |\Rm||\nabla^2 \phi|^2+\alpha C |\nabla \phi|^2 |\nabla^2 \phi|^2 +C |\nabla \phi|^4 |\nabla^2 \phi| +C |\nabla \phi|^2 |\nabla^2 \phi|^2.
\label{hess}
\end{equation}
Based on these inequalities, M\"uller developed an iterative scheme which yields a long time existence result.
\begin{theorem}[Theorem $3.12$ in \cite{mul}]
Given a solution $(g(t),\phi(t))$ of (\ref{hrf}) on $[0,T)$, $T<\infty$, with a non-increasing coupling function $\alpha(t) \in [\underline{\alpha},\overline{\alpha}]$, $0<\underline{\alpha}\le \overline{\alpha}<+\infty$. Then $T$ is maximal, i.e. the flow cannot be extended past $T$ if and only if 
\begin{equation}
\limsup_{t \nearrow T}{(\sup_{x\in M}{|\Rm(x,t)|^2})}=+\infty.
\end{equation}
\label{longtime}
\end{theorem}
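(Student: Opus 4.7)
The nontrivial implication is: assuming $\sup_{M \times [0,T)} |\Rm| \le K < \infty$, the flow extends smoothly past $T$; the other direction follows from the smoothness of any such extension (together with the short-time existence machinery applied to the extended initial data). My plan is to derive uniform $C^k$-bounds on $g(t)$ and $\phi(t)$ on $[0,T)$ for each $k$, so that a smooth limit $(g(T),\phi(T))$ exists and one can restart the flow via short-time existence, contradicting the maximality of $T$.

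The first step is to bound $|\nabla\phi|^2$. Since $|\Ric|\le C|\Rm|$ is uniformly bounded, so is $\Sc$. Combined with the fact that $\Sh_{\min}(t)$ is monotonically non-decreasing under the flow (which uses the assumption $\dot\alpha\le 0$ in the evolution equation for $\Sh$), the identity $\alpha|\nabla\phi|^2 = \Sc - \Sh$ together with $\alpha\ge \underline\alpha>0$ yields a uniform bound $|\nabla\phi|^2\le C_0$ on $M \times [0,T)$. This is essentially Lemma \ref{rbounds} in contrapositive form. The next step is to feed these bounds into inequality (\ref{hess}): using Young's inequality to absorb the $|\nabla\phi|^4|\nabla^2\phi|$ term, one obtains $\partial_t|\nabla^2\phi|^2\le \Delta|\nabla^2\phi|^2 + C|\nabla^2\phi|^2 + C$, so that a parabolic maximum principle on the finite interval $[0,T)$ yields a uniform bound on $|\nabla^2\phi|^2$.

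The third and most delicate step is a coupled Shi-type bootstrap: by induction on $k$, one establishes uniform bounds on $|\nabla^k\Rm|$ and $|\nabla^{k+2}\phi|$ on $M\times[0,T)$. At each stage, one differentiates the evolution equations (\ref{riem}) and (\ref{hess}) and controls the resulting cross-terms between metric and map curvatures using the bounds established at lower orders, absorbing them via Young's inequality into the good reaction terms $-2|\nabla^{k+1}\Rm|^2$ and $-2|\nabla^{k+3}\phi|^2$. The main obstacle of the proof lies precisely here: the coupling in (\ref{hrf}) generates mixed terms that cannot be handled by decoupled Bernstein--Shi estimates for Ricci flow and harmonic map heat flow, and one must carefully arrange the induction so that $|\nabla^k\Rm|$ and $|\nabla^{k+2}\phi|$ are controlled simultaneously at each step. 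This is precisely the iterative scheme developed in Section 3 of \cite{mul}.

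Finally, with uniform bounds on all derivatives in hand, the evolution equations $\partial_t g = -2\mc{S}$ and $\partial_t\phi = \tau_g\phi$ give $|\partial_t g|_{g(t)}\le C$ and $|\partial_t\phi|\le C$, so a Gronwall argument implies all metrics $g(t)$ are uniformly equivalent and converge in $C^\infty$ as $t\nearrow T$ to a smooth metric $g(T)$, with $\phi(t)$ similarly converging to a smooth map $\phi(T)$. The short-time existence theorem for the harmonic Ricci flow then allows us to restart the flow from $(g(T),\phi(T))$, contradicting the assumed maximality of $T$.
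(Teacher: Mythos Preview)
Your proposal is correct and matches the approach the paper attributes to \cite{mul}: the paper does not prove Theorem~\ref{longtime} itself but cites it as Theorem~3.12 of \cite{mul}, noting only that it follows from the evolution inequalities (\ref{grad})--(\ref{hess}) via an ``iterative scheme''. Your sketch---bounding $|\nabla\phi|^2$ from the $\Sc$-bound and the monotonicity of $\Sh_{\min}$, then $|\nabla^2\phi|^2$ from (\ref{hess}), then running a coupled Bernstein--Shi induction on $|\nabla^k\Rm|+|\nabla^{k+2}\phi|$, and finally extracting a smooth limit and restarting---is precisely that scheme.
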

\subsection{Distance Distortion Estimates}
\label{lengthdistortion}
Denote by $d_t \colon M \times M \longrightarrow \R$ the distance function induced by the Riemannian metric $g(t)$ and by $B_t(x,r)$ the geodesic ball centred at $x$ with radius $r$ induced by it.
\begin{lemma}
Let $(M,g(t),\phi(t))$ be a harmonic Ricci flow with target manifold $N$. Then $\forall t_0<t_1$ such that $\mathcal{S} \le K$ on $[t_0,t_1]$ and every two points $x_0,x_1 \in M$ we have
\begin{equation*}
\frac{d_{t_1}(x_0,x_1)}{d_{t_0}(x_0,x_1)} \ge e^{-K(t_1-t_0)}.
\end{equation*}
\begin{proof}
For a fixed curve $\gamma \colon [0,a] \longrightarrow M$, its length evolves by
\begin{align*}
\frac{d}{dt} \Length(\gamma)=\frac{d}{dt} \int_0^a{\sqrt{\Big< \frac{d\gamma}{ds}(s)\frac{d\gamma}{ds}(s) \Big>_t} ds}= \int_0^a{\mathcal{S} \Big(\frac{d\gamma}{ds}(s),\frac{d\gamma}{ds}(s) \Big) \frac{ds}{\big|\frac{d\gamma}{ds}\big|}} \ge -K \Length(\gamma).
\end{align*}
Integrating this gives 
\begin{equation*}
\frac{\Length(\gamma) \big|_{t_1}}{\Length(\gamma) \big|_{t_0}} \ge e^{-K(t_1-t_0)}
\end{equation*}
so we deduce the conclusion by approximating the distance between $x_0$ and $x_1$ at time $t=t_1$ with $\gamma$.
\end{proof}
\label{lengthdist}
\end{lemma}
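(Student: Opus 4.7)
The plan is to estimate how the length of a fixed smooth curve evolves under the flow, and then extract the distance distortion by taking an infimum over such curves. Fix any smooth curve $\gamma\colon[0,a]\to M$ joining $x_0$ and $x_1$, and look at $\Length_{g(t)}(\gamma)$. Since $\partial_t g=-2\mathcal{S}$, for this fixed parametrization one gets $\frac{d}{dt}|\dot\gamma|^2_{g(t)}=-2\,\mathcal{S}(\dot\gamma,\dot\gamma)$, hence
\begin{equation*}
\frac{d}{dt}\Length_{g(t)}(\gamma)=-\int_0^a\frac{\mathcal{S}(\dot\gamma,\dot\gamma)}{|\dot\gamma|_{g(t)}}\,ds\;\ge\; -K\,\Length_{g(t)}(\gamma),
\end{equation*}
where the last step uses the assumption $\mathcal{S}\le K$ on $[t_0,t_1]$ pointwise on vectors. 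Integrating this scalar ODE inequality by Gronwall yields $\Length_{g(t_1)}(\gamma)\ge e^{-K(t_1-t_0)}\Length_{g(t_0)}(\gamma)$.

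To finish, I would apply this curve-length comparison to a curve $\gamma$ that almost realizes $d_{t_1}(x_0,x_1)$, so that $\Length_{g(t_1)}(\gamma)$ is within $\eta>0$ of $d_{t_1}(x_0,x_1)$, while trivially $\Length_{g(t_0)}(\gamma)\ge d_{t_0}(x_0,x_1)$. Combining these with the Gronwall bound and letting $\eta\to 0$ gives the claimed inequality. No real obstacle is expected: this is just a first-variation-of-length computation combined with the evolution equation for $g$. The only technical subtlety is that rather than assuming a minimizing geodesic exists at time $t_1$ (which would require $g(t_1)$-completeness and no cut-locus issues), one works with an almost-minimizer and passes to the limit, which is the meaning of the ``approximating'' step in the argument.
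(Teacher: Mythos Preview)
Your proposal is correct and follows essentially the same approach as the paper: differentiate the length of a fixed curve using $\partial_t g=-2\mathcal{S}$, apply Gronwall, and then pass to the distance by choosing $\gamma$ to be an almost $g(t_1)$-minimizer. In fact you are slightly more careful with the sign in the intermediate step and with spelling out the $\eta\to 0$ argument, but the strategy is identical.
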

\begin{remark}
Similarly, if $\mathcal{S} \ge -K$, we have 
\begin{equation*}
\frac{d_{t_1}(x_0,x_1)}{d_{t_0}(x_0,x_1)} \le e^{K(t_1-t_0)}.
\end{equation*}
\end{remark}
\begin{lemma}
Let $(M,g(t),\phi(t))$ be a \emph{complete} harmonic Ricci flow with target manifold $N$. Suppose that $x_0,x_1 \in M$ are two points with $d_{t_0}(x_0,x_1) \ge 2r_0$ and $\Ric(x,t_0) \le K$ $\forall x \in B_{t_0}(x_0,r_0) \cup B_{t_0}(x_1,r_0)$. Then $\frac{d}{dt}d_t(x_0,x_1) \ge -2(\frac{2}{3}Kr_0+(n-1)r_0^{-1})$ at $t=t_0$.
\begin{proof}
Let $\gamma$ be a normalized minimal geodesic from $x_0$ to $x_1$ in $(M,g(t_0))$, and set $X(s) \coloneqq \frac{d \gamma}{ds}(s)$. Then for every vector field $V \in \mathcal{T}(\gamma)$ along $\gamma$ such that $V(x_0)=V(x_1)=0$, from the second variation of length we have
\begin{equation}
\int_0^{d_{t_0}(x_0,x_1)}{\Big(|\nabla_X V|^2-\scal{\Sc(V,X)V}{X}\Big) ds} \ge 0.
\end{equation}
Extend $X$ to a parallel orthonormal frame along $\gamma$ with $\set{e_i(s)}_{i=1}^{n-1}$. Put $V_i(s)=f(s)e_i(s)$ where 
\begin{equation}
f(s)=
\begin{cases}
\frac{s}{r_0} \ \ \ &\text{if} \ 0 \le s \le r_0,\\
1 \ \ \ \ &\text{if} \ r_0 \le s \le d_{t_0}(x_0,x_1)-r_0,\\
\frac{d_{t_0}(x_0,x_1)-s}{r_0} \ \ \ &\text{if} \ d_{t_0}(x_0,x_1)-r_0 \le s \le d_{t_0}(x_0,x_1).
\end{cases}
\end{equation}
Thus we have $|\nabla_X V_i|=|f'(s)|$ and 
\begin{equation*}
\int_0^{d_{t_0}(x_0,x_1)}{|\nabla_X V_i|^2 ds}=2\int_0^{r_0}{\frac{1}{r_0}}=\frac{2}{r_0}.
\end{equation*}
Next, we compute
\begin{align*}
\int_0^{d_{t_0}(x_0,x_1)}{\scal{\Sc(V_i,X)V_i}{X} ds}=&\int_0^{r_0}{\frac{s^2}{r_0^2}\scal{\Sc(e_i,X)e_i}{X} ds}+\int_{r_0}^{d_{t_0}(x_0,x_1)-r_0}{\scal{\Sc(e_i,X)e_i}{X} ds}\\
&+\int_{d_{t_0}(x_0,x_1)-r_0}^{d_{t_0}(x_0,x_1)}{\frac{(d_{t_0}(x_0,x_1)-s)^2}{r_0^2}\scal{\Sc(e_i,X)e_i}{X} ds}
\end{align*}
so summing over $i$ we get
\begin{align*}
0 \le& \sum_{i=1}^{n-1} \int_0^{d_{t_0}(x_0,x_1)}{\big( |\nabla_X V_i|^2-\scal{\Sc(V_i,X)V_i}{X} \big) ds}=\frac{2(n-1)}{r_0}-\int_0^{d_{t_0}(x_0,x_1)}{\Ric(X,X) ds}\\
&-\int_0^{r_0}{\bigg(\frac{s^2}{r_0^2}-1 \bigg) \Ric(X,X) ds}-\int_{d_{t_0}(x_0,x_1)-r_0}^{d_{t_0}(x_0,x_1)}{\bigg(\frac{(d_{t_0}(x_0,x_1)-s)^2}{r_0^2}-1 \bigg) \Ric(X,X) ds}.
\end{align*}
Taking a geodesic variation of $\gamma$ with fixed endpoints for $t$ near $t_0$ (using the completeness assumption) we have
\begin{align*}
\frac{d}{dt} d_t(x_0,x_1) \big|_{t=t_0}&=-\int_0^{d_{t_0}(x_0,x_1)}{\mathcal{S}(X,X) ds} \ge -\int_0^{d_{t_0}(x_0,x_1)}{\Ric(X,X) ds} \\
&\ge -\frac{2(n-1)}{r_0} -2K\frac{2}{3}r_0. \qedhere
\end{align*}
\end{proof}
\label{lengthdist1}
\end{lemma}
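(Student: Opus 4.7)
The plan is to adapt Hamilton's classical distance distortion argument for Ricci flow to the harmonic Ricci flow setting. The key observation enabling the adaptation is almost free: $\mathcal{S}(X,X) = \Ric(X,X) - \alpha|\nabla_X\phi|^2 \leq \Ric(X,X)$ for every tangent vector $X$, so any Ricci upper bound along a minimising geodesic immediately yields the same upper bound for the driving term $\mathcal{S}$. This is another instance of the dominance of the geometric component of the flow over the map component emphasised at the start of Section \ref{results}, and it means no new ideas beyond the Ricci flow case are needed.

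The first step is to reduce to an integral estimate on $\Ric$. Using completeness of $g(t_0)$, fix a unit-speed minimising geodesic $\gamma \colon [0,d] \to M$ from $x_0$ to $x_1$ with respect to $g(t_0)$, where $d := d_{t_0}(x_0,x_1)$, and set $X = \dot\gamma$. Denoting by $L(t)$ the length of $\gamma$ measured in $g(t)$, one has $d_t(x_0,x_1) \leq L(t)$ with equality at $t=t_0$, and since $\partial_t g = -2\mathcal{S}$ the first variation formula gives
\begin{equation*}
\frac{d}{dt}\bigg|_{t=t_0} d_t(x_0,x_1) \;\geq\; L'(t_0) \;=\; -\int_0^d \mathcal{S}(X,X)\, ds \;\geq\; -\int_0^d \Ric(X,X)\, ds,
\end{equation*}
where the derivative of $d_t$ at $t_0$ is understood one-sidedly (as a lower bound on the left Dini derivative), which is the standard convention in this setting.

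The next step is the classical second variation trick. Extend $X$ to a parallel orthonormal frame $\{X,e_1,\dots,e_{n-1}\}$ along $\gamma$ and set $V_i(s) = f(s) e_i(s)$, where $f\colon [0,d]\to [0,1]$ is the piecewise linear hat equal to $s/r_0$ on $[0,r_0]$, to $1$ on $[r_0, d-r_0]$, and to $(d-s)/r_0$ on $[d-r_0, d]$; the hypothesis $d \geq 2r_0$ ensures this is well-defined. Because each $V_i$ vanishes at the endpoints, the second variation of arc-length for the minimiser $\gamma$ yields $\int_0^d (|\nabla_X V_i|^2 - \langle R(V_i,X)V_i, X\rangle)\,ds \geq 0$. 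Summing over $i$, using parallelism and orthonormality to get $\sum_i \langle R(e_i,X)e_i,X\rangle = \Ric(X,X)$ and the elementary computation $\int_0^d |f'|^2\, ds = 2/r_0$, one obtains $\int_0^d f(s)^2\,\Ric(X,X)\, ds \leq 2(n-1)/r_0$.

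Finally, I would split $\int_0^d \Ric(X,X)\,ds = \int_0^d f^2\,\Ric(X,X)\,ds + \int_0^d (1-f^2)\,\Ric(X,X)\,ds$. The first summand is controlled by the previous step. For the second, the weight $1-f^2$ is non-negative and supported in $[0,r_0] \cup [d-r_0, d]$; since $\gamma$ is a unit-speed minimiser, $\gamma(s)$ lies in $B_{t_0}(x_0,r_0) \cup B_{t_0}(x_1,r_0)$ on this set, where $\Ric(X,X) \leq K$ by hypothesis, so the direct computation $\int_0^{r_0}(1-s^2/r_0^2)\,ds = \tfrac{2}{3}r_0$ contributes at most $\tfrac{4}{3}K r_0$. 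Assembling the pieces yields $\int_0^d \Ric(X,X)\, ds \leq 2(n-1)/r_0 + \tfrac{4}{3}K r_0$, and hence the claimed bound. I do not foresee any serious obstacle; the only mildly subtle point is precisely the interpretation of $\tfrac{d}{dt}d_t$ at $t_0$, which is handled by the one-sided argument above and requires the completeness hypothesis to produce the comparison smooth variation.
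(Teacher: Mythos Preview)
Your proof is correct and follows essentially the same approach as the paper: the same second variation argument with the piecewise linear hat function, the same key inequality $\mathcal{S}(X,X) \le \Ric(X,X)$ to reduce to a Ricci bound, and the same split of $\int_0^d \Ric(X,X)\,ds$ into the part controlled by second variation and the part supported in the two end-balls where the hypothesis applies. Your explicit handling of $\tfrac{d}{dt}d_t$ via the barrier $d_t \le L(t)$ is slightly more careful than the paper's phrasing, but the substance is identical.
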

\begin{corollary}
If $\Ric \le K$ in a \emph{complete} harmonic Ricci flow then there exists a constant $c=c(n)>0$ such that for every $x_0,x_1 \in M$, $\frac{d}{dt} d_t(x_0,x_1) \ge -cK^{1/2}$.
\begin{proof}
Put $r_0 =K^{-\frac{1}{2}}$. If $d_t(x_0,x_1) \le 2r_0$ then use Lemma \ref{lengthdist}, otherwise use Lemma \ref{lengthdist1}.
\end{proof}
\end{corollary}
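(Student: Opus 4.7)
The plan is to combine the two preceding distance-distortion lemmas, which are already tailored to complementary regimes: Lemma \ref{lengthdist} gives exponential control that is cheap when the two points are close, while Lemma \ref{lengthdist1} gives the sharper dissipation estimate that one needs when they are far apart. The only issue is reconciling the hypotheses: Lemma \ref{lengthdist} is stated for $\mathcal{S} \le K$ rather than $\Ric \le K$, but since $\alpha(t) \ge 0$ and $\nabla\phi \otimes \nabla\phi \ge 0$, we have $\mathcal{S} = \Ric - \alpha \nabla\phi \otimes \nabla\phi \le \Ric \le K$, so the hypothesis transfers for free.

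The second ingredient is the choice of scale. In Lemma \ref{lengthdist1} the lower bound has the form $-2\bigl(\tfrac{2}{3} K r_0 + (n-1) r_0^{-1}\bigr)$, which is balanced precisely when $r_0 = K^{-1/2}$; with this choice both terms scale as $K^{1/2}$, and we obtain a constant $c(n) = 2\bigl(\tfrac{2}{3} + (n-1)\bigr)$.

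With $r_0 := K^{-1/2}$ fixed, I would split into two cases according to whether $d_t(x_0,x_1) \le 2r_0$ or $d_t(x_0,x_1) > 2r_0$. In the first case, apply Lemma \ref{lengthdist} on a short time interval $[t, t+h]$ with $\mathcal{S} \le K$; differentiating the inequality $d_{t+h}(x_0,x_1) \ge e^{-Kh} d_t(x_0,x_1)$ at $h = 0$ yields
\begin{equation*}
\tfrac{d}{dt} d_t(x_0,x_1) \ge -K \, d_t(x_0,x_1) \ge -2Kr_0 = -2K^{1/2}.
\end{equation*}
In the second case the ball hypothesis of Lemma \ref{lengthdist1} is satisfied (since $\Ric \le K$ holds on all of $M$, hence in particular on $B_{t}(x_i,r_0)$), so applying it directly with our choice of $r_0$ gives $\tfrac{d}{dt} d_t(x_0,x_1) \ge -c(n) K^{1/2}$.

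There is no real obstacle here: the two lemmas were designed so that taking the worse of the two constants and setting $c(n)$ accordingly closes the argument. The only subtlety worth flagging is that $\tfrac{d}{dt} d_t$ need not exist classically when minimizing geodesics are not unique, so the estimate should be interpreted in the sense of the $\liminf$ of forward difference quotients, which is exactly the sense in which both Lemma \ref{lengthdist} and Lemma \ref{lengthdist1} produce it.
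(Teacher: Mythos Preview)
Your proof is correct and follows exactly the same approach as the paper: set $r_0 = K^{-1/2}$, then use Lemma \ref{lengthdist} when $d_t(x_0,x_1) \le 2r_0$ and Lemma \ref{lengthdist1} otherwise. You have simply filled in the details the paper leaves implicit, including the observation that $\mathcal{S} \le \Ric$ and the remark about interpreting the derivative in the barrier sense.
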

With a very similar proof as the Lemma \ref{lengthdist1} one can prove (see Lemma $4$ in \cite{guo}):
\begin{lemma}
Let $(M,g(t),\phi(t))$ be a \emph{complete} harmonic Ricci flow with target manifold $N$.

\noindent If $\Ric(x,t_0) \le K$ $\forall x \in B_{t_0}(x_0,r_0)$ then for every $x \notin B_{t_0}(x_0,r_0)$ we have
\begin{equation*}
\frac{d}{dt} d_t(x_0,x) \bigg|_{t=t_0} -\Delta_{t_0} d_{t_0}(x_0,x) \ge -\bigg(\frac{2}{3}Kr_0+(n-1)r_0^{-1}\bigg).
\end{equation*} 
\label{lengthdist2}
\end{lemma}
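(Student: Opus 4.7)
The plan is to mimic the second-variation argument of Lemma \ref{lengthdist1}, but with a \emph{one-sided} cutoff (so as to use only the curvature control near $x_0$), and to extract an upper bound for $\Delta_{t_0} d_{t_0}(x_0, \cdot)$ at the same time. Let $\gamma \colon [0, L] \to M$ be a unit-speed minimizing geodesic from $x_0$ to $x$ under $g(t_0)$, with $L = d_{t_0}(x_0, x) \ge r_0$ by hypothesis, and set $X = \dot\gamma$. As in Lemma \ref{lengthdist1}, by taking a geodesic variation with fixed endpoints (available by completeness), the first variation of length gives
\begin{equation*}
\frac{d}{dt} d_t(x_0, x) \Big|_{t = t_0} = -\int_0^L \mathcal{S}(X, X)\, ds \ge -\int_0^L \Ric(X, X)\, ds,
\end{equation*}
since $\mathcal{S} = \Ric - \alpha \nabla\phi \otimes \nabla\phi$ and $\alpha \ge 0$; the map-component contribution is only helpful and may be discarded.

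For the Laplacian, I would apply the standard index-form bound: if $V_i$ are vector fields along $\gamma$ perpendicular to $X$, with $V_i(0) = 0$ and $V_i(L) = e_i(L)$ where $\{e_i\}_{i=1}^{n-1}$ is a parallel orthonormal frame perpendicular to $X$, then
\begin{equation*}
\Delta_{t_0} d_{t_0}(x_0, x) \le \sum_{i=1}^{n-1} \int_0^L \big( |\nabla_X V_i|^2 - \scal{\Rm(V_i, X) V_i}{X} \big)\, ds.
\end{equation*}
The decisive choice is the one-sided cutoff $V_i(s) = f(s) e_i(s)$ with $f(s) = s/r_0$ on $[0, r_0]$ and $f(s) = 1$ on $[r_0, L]$, which is admissible since $L \ge r_0$ and crucially requires no control of $\Ric$ near $x$. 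Direct computation yields $\sum_i \int_0^L |\nabla_X V_i|^2\, ds = (n-1)/r_0$, and
\begin{equation*}
\sum_i \int_0^L f^2 \scal{\Rm(e_i, X) e_i}{X}\, ds = \int_0^L \Ric(X, X)\, ds - \int_0^{r_0} \Big(1 - \tfrac{s^2}{r_0^2}\Big) \Ric(X, X)\, ds,
\end{equation*}
with the final integral bounded by $\tfrac{2}{3} K r_0$ using $\Ric \le K$ on $B_{t_0}(x_0, r_0) \supset \gamma([0, r_0])$. Hence
\begin{equation*}
\Delta_{t_0} d_{t_0}(x_0, x) \le -\int_0^L \Ric(X, X)\, ds + \tfrac{n-1}{r_0} + \tfrac{2}{3} K r_0,
\end{equation*}
and subtracting from the first-variation bound causes the $\Ric$ integrals to cancel exactly, producing the claim.

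The main technical subtlety is that $d_{t_0}(x_0, \cdot)$ need not be smooth at $x$ (due to conjugate points or multiple minimizers). The conclusion is therefore to be read in the barrier sense \emph{à la} Calabi: the smooth spatial upper support function obtained by freezing the chosen minimizing $\gamma$ satisfies the pointwise inequality at $x$, which suffices for all downstream applications. Since we only seek an inequality, no extra work is required beyond fixing one such $\gamma$.
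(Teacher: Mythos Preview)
Your argument is correct and is exactly the approach the paper has in mind: it refers the reader to the proof of Lemma~\ref{lengthdist1} (and to \cite{guo}), and the only modification is precisely the one-sided cutoff you use, together with the index-form upper bound for $\Delta_{t_0} d_{t_0}(x_0,\cdot)$ in place of the second-variation inequality for variations vanishing at both endpoints. Your remark on the barrier interpretation at cut points is appropriate and matches how the result is used downstream.
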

\subsection{Compactness Theorems}
\label{compactness}
In this section we provide several refinements of the compactness result for harmonic Ricci flows obtained by Shi in \cite{shi} and Williams in \cite{wil}. One of these improvements is needed in the proof of the pseudo-locality theorem where only local bounds on the curvature are available, and therefore Shi's and Williams' results do not directly apply.

We follow the argument developed by Topping in \cite{top1} for the Ricci flow case (extending Hamilton's compactness result in \cite{ham2}), see also \cite{top2} for an expository review. 
The first theorem we want to prove is the following.
\begin{theorem}[Compactness of harmonic Ricci flows: Extension 1]
Let $(M_i,g_i(t),\phi_i(t),p_i)$ be a given sequence of $n-$dimensional, pointed, complete harmonic Ricci flows, with fixed target manifold $N$, all defined for $t \in (a,b)$, where $-\infty \le a<0<b \le +\infty$, with (possibly different) non-increasing coupling functions $\alpha_i(t) \in [\underline{\alpha},\overline{\alpha}]$, where $0<\underline{\alpha} \le \overline{\alpha}<\infty$. Assume further that
\begin{itemize}
\item[(i)] $\forall r>0$ there exists a constant $M=M(r)$ such that $\forall t \in (a,b)$ and $\forall i$ 
\begin{equation*}
\sup_{B_{g_i(0)}(p_i,r)}{|\Rm(g_i(t))|_{g_i(t)}}\le M,
\end{equation*}
\item[(ii)] $\inf_i{\inj(M_i,g_i(0),p_i)}>0$.
\end{itemize}
Then there exist an $n-$dimensional manifold $M_{\infty}$, a harmonic Ricci flow $(g_{\infty}(t),\phi_{\infty}(t))$ on $M_{\infty} \times (a,b)$ with target manifold $N$ and non-increasing coupling function $\alpha_{\infty}(t)$, and a point $p_{\infty}\in M_{\infty}$, such that $(M_{\infty},g_{\infty}(0))$ is complete, and we have $(M_i,g_i(t),\phi_i(t),p_i) \longrightarrow (M_{\infty},g_{\infty}(t),\phi_{\infty}(t),p_{\infty})$ in the pointed Cheeger-Gromov sense (after passing to a subsequence) and $\alpha_i(t)\longrightarrow \alpha_{\infty}(t)$ pointwise in $(a,b)$.
\label{compact1}
\end{theorem}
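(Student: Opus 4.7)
The plan is to adapt Topping's approach to the Ricci flow compactness theorem \cite{top1, top2} to the coupled harmonic Ricci flow setting. The additional complications compared to the pure Ricci flow case are: (a) hypothesis (i) is only local in space, so we cannot invoke the global pointed estimates of \cite{shi, wil} directly; (b) the map component $\phi_i$ and its derivatives must be controlled uniformly; and (c) the coupling functions $\alpha_i(t)$ vary with $i$ and must be handled separately.

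The first step is to bootstrap hypothesis (i) to uniform local $C^\infty$ bounds in space and time. Combining the local curvature bound with cutoff functions supported in $B_{g_i(0)}(p_i, r)$ and the evolution (in)equalities (\ref{grad}), (\ref{riem}), (\ref{hess}) together with their higher-order analogues, one derives Bernstein--Shi-type local estimates: on any $B_{g_i(0)}(p_i, r/2)$ and any $[t_1, t_2] \subset (a,b)$ containing $0$,
\begin{equation*}
|\nabla^k \Rm(g_i(t))|_{g_i(t)} + |\nabla^{k+1} \phi_i(t)|_{g_i(t)} \le C(r, k, t_1, t_2, M(r), \underline{\alpha}, \overline{\alpha}).
\end{equation*}
The initial control of $|\nabla \phi_i|$ is obtained from Lemma \ref{rbounds} together with a cutoff maximum principle applied to (\ref{grad}); higher orders close by induction, and the coupling terms are harmless because $\alpha_i \in [\underline{\alpha}, \overline{\alpha}]$.

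The second step is to transfer these $g_i(0)$-ball estimates to $g_i(t)$-balls. The derivative bounds just obtained yield a uniform bound on $\mathcal{S}_i = \Ric_{g_i(t)} - \alpha_i \nabla \phi_i \otimes \nabla \phi_i$ on $B_{g_i(0)}(p_i, r/2)$, and Lemma \ref{lengthdist} together with its remark then gives
\begin{equation*}
e^{-K|t-s|}\, g_i(s) \le g_i(t) \le e^{K|t-s|}\, g_i(s)
\end{equation*}
on this ball for $s,t \in [t_1, t_2]$. Thus $g_i(0)$- and $g_i(t)$-balls centred at $p_i$ are mutually nested with a controlled loss of radius, and hypothesis (ii) upgrades, after slight shrinking, to a uniform positive injectivity radius lower bound at $t=0$ that is stable in time. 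Classical Cheeger--Gromov compactness applied to $(M_i, g_i(0), p_i)$ then produces a smooth $n$-manifold $M_\infty$, a point $p_\infty$, a complete metric $g_\infty(0)$, and smooth embeddings $\Phi_i\colon U_i \hookrightarrow M_i$ of a compact exhaustion $U_1 \Subset U_2 \Subset \ldots$ of $M_\infty$ with $\Phi_i^* g_i(0) \to g_\infty(0)$ in $C^\infty_{loc}$.

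Pulling back $g_i(t)$ and $\phi_i(t)\colon M_i \to N \subset \R^d$ (via Nash) by $\Phi_i$, the uniform $C^\infty_{loc}$ estimates from the first two steps together with Arzelà--Ascoli on each $U_k$ and each compact subinterval of $(a,b)$, plus a diagonal extraction, produce the harmonic Ricci flow $(M_\infty, g_\infty(t), \phi_\infty(t))$ with target $N$ on $(a,b)$. Finally the coupling functions $\alpha_i(t) \in [\underline{\alpha}, \overline{\alpha}]$ are bounded and monotone non-increasing on $(a,b)$, so Helly's selection theorem yields a further subsequence converging pointwise to a non-increasing limit $\alpha_\infty(t)$, and one passes to the limit in (\ref{hrf}). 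The main obstacle is the first step: the cross terms of the form $\alpha |\nabla \phi|^2 |\Rm|$, $\alpha |\nabla^2 \phi|^2 |\Rm|$ and $|\nabla \phi|^4 |\nabla^2 \phi|$ in (\ref{riem}) and (\ref{hess}) force a simultaneous induction on all orders of derivatives of $\Rm$ and $\phi$, and the spatial localization by cutoffs must be reconciled with the metric distortion of the second step on a slightly smaller ball before the absorption arguments close.
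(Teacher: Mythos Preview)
Your proposal is correct and follows essentially the same strategy as the paper: first obtain local Bernstein--Shi type derivative estimates (what the paper records as Proposition~A.5 of \cite{mul}), then apply Cheeger--Gromov compactness at $t=0$, and finally propagate to all times via uniform $C^{k,s}$ control of the pulled-back data (what the paper packages as Lemma~\ref{timesgluing}, quoted from \cite{shi,wil}) together with Arzel\`a--Ascoli and a diagonal argument; the coupling functions are handled exactly as you say by Helly's theorem. The only presentational difference is that the paper outsources your first step and the space--time gluing to those cited results rather than sketching the cutoff maximum-principle induction you describe, and it folds your ``second step'' (comparing $g_i(0)$-balls to $g_i(t)$-balls) into a remark following Proposition~A.5 rather than stating it separately.
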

Before starting with the proof, we need to recall some results.
\begin{definition}
A sequence of $n-$dimensional, pointed, harmonic Ricci flows $(M_i,g_i(t),\phi_i(t),p_i)$, with fixed target manifold $N$, defined for $t \in (a',b')$, where $-\infty \le a'<0<b' \le +\infty$, is said to satisfy \textit{uniform derivative curvature bounds} if $\forall s>0$, $\forall k \in \mathbb{N}$, there exist constants $M(k,s)$ and $C(s)$ such that $\forall t \in (a',b')$ and $\forall i$ we have 
\begin{equation}
\sup_{B_{g_i(0)}(p_i,s)}{|\nabla_i \phi_i|_{g_i}(t)}\le C(s), \sup_{B_{g_i(0)}(p_i,s)}{ \Big( |\nabla_i^{(k)} \Rm_i|_{g_i}(t)+|\nabla_i^{(k+2)} \phi_i|_{g_i}(t) \Big)}\le M(k,s).
\label{uniformderivativecurvaturebounds}
\end{equation}
\end{definition}
Recall the following proposition from \cite{mul}:
\begin{proposition}[Proposition $A.5$, \cite{mul}]
Let $(g(t),\phi(t))$ be a complete solution of the harmonic Ricci flow defined on $[0,T)$, with a non-increasing coupling function $\alpha(t) \in[\underline{\alpha},\overline{\alpha}]$, where $0<\underline{\alpha} \le \overline{\alpha}< \infty$ and let $T' <T<+ \infty$. Define $B \coloneqq B_{g(T')}(x,r)$, and assume that $|\Rm| \le R_0$ on $B \times [0,T']$ for some constant $R_0$. Then there exist constants $K=K(\underline{\alpha},\overline{\alpha},R_0,T,n,N)$ and $C_k=C_k(k,\overline{\alpha},n,N)$ for $k \in \mathbb{N}$, $C_0=1$, such that for all $k \ge 0$ we have
\begin{equation}
|\nabla \phi|^2\le \frac{K}{t},|\Rm|\le \frac{K}{t}, |\nabla^k \Rm|^2+|\nabla^{k+2} \phi|^2 \le C_k \bigg( \frac{K}{t}\bigg)^{k+2}
\end{equation}
for every $(x,t) \in B_{g(T')}(x,r/2) \times (0,T')$.
\end{proposition}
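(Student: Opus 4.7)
The plan is to adapt the classical Bernstein–Shi technique to the coupled system, proceeding in three stages: first bound $|\nabla\phi|^2$, then bound $|\Rm|$ and $|\nabla^2\phi|$ simultaneously, and finally iterate to bound all higher derivatives. Since the assumption is only on the parabolic cylinder $B \times [0,T']$ (with $B = B_{g(T')}(x,r)$) and the conclusion is on $B_{g(T')}(x,r/2)$, all maximum principle computations must be localized via a spatial cutoff $\eta$ supported in $B$ and equal to one on the smaller ball. Because the metric is evolving, one first needs to arrange, using the curvature bound $R_0$ and the distance distortion estimates of Section~\ref{lengthdistortion}, that $\eta$ has gradient and Laplacian bounded uniformly in $t \in [0,T']$ in terms of $R_0$, $r$ and $n$.

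Stage one is to prove $|\nabla\phi|^2 \le K/t$. Starting from the evolution equation (\ref{grad}), the terms $-2\alpha|\nabla\phi\otimes\nabla\phi|^2 \le -\tfrac{2\underline{\alpha}}{n}|\nabla\phi|^4$ and $-2|\nabla^2\phi|^2$ are favourable, while the curvature term is controlled by $C(N) R_0 |\nabla\phi|^2$. Applying the parabolic maximum principle to $F = \eta^2 t|\nabla\phi|^2$ and using Young's inequality to absorb the gradient terms coming from $\nabla\eta^2$ into the good quartic term gives $F \le K(\underline{\alpha},\overline{\alpha},R_0,T,n,N)$, and hence the desired smoothing estimate on $B_{g(T')}(x,r/2)$.

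Stage two, which I expect to be the main obstacle, is to bound $|\Rm|$ and $|\nabla^2\phi|$ at the same time. The inequalities (\ref{riem})–(\ref{hess}) are coupled: the evolution of $|\Rm|^2$ contains a term $\alpha C|\nabla^2\phi|^2|\Rm|$, and the evolution of $|\nabla^2\phi|^2$ contains $C|\Rm||\nabla^2\phi|^2$, so neither quantity can be controlled on its own. The natural strategy is to run the maximum principle on a combined auxiliary function such as
\begin{equation*}
F \;=\; \eta^4\,\bigl(A + t|\Rm|^2\bigr)\bigl(A + t|\nabla^2\phi|^2\bigr),
\end{equation*}
for a sufficiently large constant $A = A(R_0,\underline{\alpha},\overline{\alpha},n,N)$. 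The stage-one bound $|\nabla\phi|^2 \le K/t$ is essential here: it converts the $|\nabla\phi|^4$ and $|\nabla\phi|^2|\nabla^2\phi|^2$ contributions in (\ref{riem})–(\ref{hess}) into manageable powers of $1/t$ that can be compensated by the factor $t$ in $F$, while the good terms $-2|\nabla\Rm|^2$ and $-2|\nabla^3\phi|^2$ provided by Kato's inequality are used to absorb all cross terms produced by $\nabla\eta$ and by the mixed products $|\nabla\Rm||\nabla^3\phi|$. The technical point is to balance the exponents of $\eta$ and $t$ and the size of $A$ so that every bad cubic term (in the quantity being bounded) is beaten by either a good quartic term or by terms killed by $\nabla\eta$.

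Stage three proceeds by induction on $k$. Assuming bounds of the required form for $|\nabla^j\Rm|$ and $|\nabla^{j+2}\phi|$ for all $j < k$, I differentiate (\ref{riem})–(\ref{hess}) $k$ times using Bianchi and commutator identities and again apply the maximum principle on a combined quantity of Shi type, for instance
\begin{equation*}
F_k \;=\; \eta^{2m_k}\Bigl(B_k + t^{k+1}\bigl(|\nabla^{k-1}\Rm|^2 + |\nabla^{k+1}\phi|^2\bigr)\Bigr)\Bigl(B_k + t^{k+2}\bigl(|\nabla^k\Rm|^2 + |\nabla^{k+2}\phi|^2\bigr)\Bigr),
\end{equation*}
in which all lower-order quantities are already controlled by the induction hypothesis. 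Iterating this gives the full statement. Throughout, the non-increasing assumption on $\alpha(t)$ ensures that the terms $-\dot\alpha|\nabla\phi|^2$ and similar contributions from time-differentiating $\alpha$ have a favourable sign or are harmless.
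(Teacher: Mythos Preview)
The paper does not prove this proposition: it is quoted verbatim as Proposition~A.5 from M\"uller~\cite{mul} and is used only as a black box to produce the uniform derivative curvature bounds feeding into the compactness theorems. There is therefore no in-paper proof to compare your proposal against.

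Your outline is the standard Bernstein--Shi scheme, which is indeed the route taken in~\cite{mul}, but two of the specific choices you make would not go through as written. In Stage one, the target-curvature term $2\langle\Rm^N(\nabla_i\phi,\nabla_j\phi)\nabla_j\phi,\nabla_i\phi\rangle$ in~(\ref{grad}) is quartic in $\nabla\phi$, bounded by $C(N)|\nabla\phi|^4$, not by $C(N)R_0|\nabla\phi|^2$ as you claim; since $\underline{\alpha}$ is arbitrary, the favourable term $-\tfrac{2\underline{\alpha}}{n}|\nabla\phi|^4$ need not dominate it, so one cannot simply run the maximum principle on $\eta^2 t|\nabla\phi|^2$ with that reasoning. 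In Stage two, the bound $|\Rm|\le K/t$ is immediate from the hypothesis $|\Rm|\le R_0$ once $K\ge R_0 T$, so the genuine task is $|\nabla^2\phi|$; however, your auxiliary function $\eta^4(A+t|\Rm|^2)(A+t|\nabla^2\phi|^2)$ does not manufacture the absorbing term $-|\nabla^2\phi|^4$ that the argument needs, because the good gradient term coming from the first factor is $-2|\nabla\Rm|^2$, not $-2|\nabla^2\phi|^2$. The usual remedy is to pair $|\nabla^2\phi|^2$ with $|\nabla\phi|^2$ rather than with $|\Rm|^2$, i.e.\ to work with something like $\eta^2(A+t|\nabla\phi|^2)\,t|\nabla^2\phi|^2$, so that the $-2|\nabla^2\phi|^2$ in~(\ref{grad}) supplies the required quartic; the coupling of $|\nabla^k\Rm|$ and $|\nabla^{k+2}\phi|$ that you anticipate only becomes essential from $k=1$ onward.
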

\begin{remark}
We first remark that this theorem gives local estimates depending only on the curvature bound and on the (possibly singular) time $T$. In the case $T$ is not singular (which is the case we will need), we can take $T'=T$.

Secondly, given the curvature bound $R_0$, we know $B_{g(0)}(x,\sigma_1 r) \subset B_{g(T')}(x,r) \subset B_{g(0)}(x,\sigma_2 r)$ for constants $\sigma_1,\sigma_2$ which depend only on $R_0$ and on the finiteness of the time $T$ (see Lemma \ref{lengthdist}), thus we can assume that both the bounds in the hypothesis and the conclusion hold on time $(t=0)-$balls crossed with the time interval $(0,T)$.
\end{remark}
\begin{remark}
As Hamilton remarked in \cite{ham2} for the corresponding Ricci flow result, it is enough to prove Theorem \ref{compact1} in the case $a,b \in \R$. Once proved on finite intervals, a standard diagonal argument yields the other cases. Furthermore, from the assumption $(i)$ in the main Theorem \ref{compact1}, we get that the sequence verifies \textit{uniform derivative curvature bounds} in a slightly smaller finite interval $[a+\e,b-\e]$ for any arbitrarily small $\e>0$ from the proposition above. So if we can prove the theorem with the extra assumption of having \textit{uniform derivative curvature bounds}, we can take a sequence $\e_j$ tending to $0$, and for any such $j$ we have a limit harmonic Ricci flow that agrees with the ones obtained for smaller $j$ (on the interval where both limits exist), and so, by a diagonal argument, we obtain a limit flow on the interval $(a,b)$ as required.
\end{remark}
The plan of the proof is the same as in \cite{ham2}: we first extract a limit manifold of the $t=0$ time-slices, and then use uniform bounds to obtain convergence at the other times. The compactness theorem of manifolds we will use is the following (cfr. Theorem 2.5 in \cite{top2}):
\begin{theorem}[Compactness of manifolds]
Suppose that $(M_i,g_i,p_i)$ is a sequence of (smooth) complete, pointed Riemannian manifolds, all of dimension $n$, satisfying:
\begin{itemize}
\item[(i)] $\forall r>0$ and $\forall k \in \mathbb{N}$ we have 
\begin{equation*}
\sup_i \sup_{B_{g_i}(p_i,r)}{|\nabla_i^k \Rm(g_i)|_{g_i}}<\infty
\end{equation*}
and
\item[(ii)] $\inf_i{\inj(M_i,g_i,p_i)}>0$.
\end{itemize}
Then there exists a (smooth) complete, pointed Riemannian manifold $(M_{\infty},g_{\infty},p_{\infty}$) of dimension $n$ such that, after passing to some subsequence, $(M_i,g_i,p_i) \longrightarrow (M_{\infty},g_{\infty},p_{\infty})$ in the pointed Cheeger-Gromov sense.
\label{compactnessmanifold}
\end{theorem}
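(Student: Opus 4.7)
The plan is to follow the classical Cheeger--Gromov strategy: represent each manifold locally by exponential charts with uniform $C^{k}$ bounds, run an Arzel\`a--Ascoli extraction on each chart, and assemble the pieces via a diagonal argument along an exhaustion to produce both the limit manifold $M_{\infty}$ and the diffeomorphisms realising the pointed Cheeger--Gromov convergence.

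First I would upgrade the injectivity-radius lower bound at $p_i$ to a uniform lower bound on $\inj(M_i,g_i,\cdot)$ on every $g_i$-ball $B_{g_i}(p_i,r)$. This is Cheeger--Gromov--Taylor: under the two-sided sectional curvature bound furnished by hypothesis (i) with $k=0$, the injectivity radius at $p_i$ controls the injectivity radius at every point of a bounded $g_i$-neighbourhood, with constants depending only on the curvature bound and on $\inf_i \inj(M_i,g_i,p_i)$. Combined with hypothesis (i) for all $k$, the exponential maps $\exp_{p_i}^{g_i}: B_{0}(0,\rho) \subset T_{p_i}M_i \to M_i$ are local diffeomorphisms on a ball of definite radius $\rho$, and the pull-back metrics $(\exp_{p_i}^{g_i})^{\ast}g_i$ satisfy uniform $C^{k}$ bounds on $B_{0}(0,\rho)$ for every $k$, via the standard estimates expressing Euclidean derivatives of metric components in normal coordinates in terms of covariant derivatives of curvature (cfr.\ Hamilton's compactness argument in \cite{ham2}).

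Next I would fix an $\e$-net $\{q_{i,j}\}_{j}$ in $B_{g_i}(p_i,r)$ at each point of which the same normal-coordinate setup applies; Arzel\`a--Ascoli on the pulled-back metrics (viewed as symmetric $2$-tensors on a fixed Euclidean ball) extracts a $C^{\infty}_{\mathrm{loc}}$ limit metric on each chart. A diagonal argument across $r \nearrow \infty$, $\e \searrow 0$ and $k \nearrow \infty$ yields a consistent limiting atlas of coordinate charts and a smooth limit metric $g_{\infty}$ on a manifold $M_{\infty}$ built by gluing the charts using the subsequential limits of the transition functions between overlapping exponential neighbourhoods (which themselves have uniform $C^{\infty}$ bounds and subconverge). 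The uniform injectivity-radius lower bound at $p_{\infty}$, together with the uniform curvature bounds on every ball, ensures that $(M_{\infty},g_{\infty})$ is complete.

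Finally, the diffeomorphisms $\Phi_i : U_i \to V_i \subset M_i$ realising pointed Cheeger--Gromov convergence are produced on an exhaustion $U_1 \subset U_2 \subset \cdots \subset M_{\infty}$ chart by chart, composing the limit coordinate chart with the inverse of the corresponding chart on $M_i$ and patching. The step I expect to be the main obstacle is precisely this patching: ensuring that the locally defined diffeomorphisms splice into a single smooth map $\Phi_i$ on all of $U_i$ which is globally injective and satisfies $\Phi_i^{\ast} g_i \to g_{\infty}$ in $C^{\infty}_{\mathrm{loc}}$. This is handled by refining the net to control overlaps, using a center-of-mass construction (or equivalently a partition of unity adapted to normal coordinates) to average the locally defined maps, and invoking the inverse function theorem at the scale $\rho$ to obtain global injectivity once the approximation is close enough on each chart. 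Hypothesis (ii) is essential here, since any collapsing of injectivity radius would obstruct both the existence of the exponential charts of uniform size and the global injectivity of $\Phi_i$.
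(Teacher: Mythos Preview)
The paper does not prove this theorem at all: it is quoted as a known result (``cfr.\ Theorem 2.5 in \cite{top2}'') and used as a black box in the proof of Theorem \ref{compact1}. Your sketch is the classical Cheeger--Gromov/Hamilton argument, which is exactly how the result is established in the cited references, so there is nothing to compare against within the paper itself.

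As a brief sanity check on the sketch: the outline is sound, though in practice the literature (Hamilton \cite{ham2}, and the accounts in \cite{top1,top2}) typically uses \emph{harmonic} coordinates rather than normal coordinates, because harmonic coordinates give elliptic control of the metric components directly from curvature bounds. With hypothesis (i) for all $k$ this distinction is not essential, since normal coordinates also yield uniform $C^k$ bounds on the metric in that regime. Your identification of the patching/gluing step as the delicate point is accurate, and the center-of-mass averaging you propose is the standard device.
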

The uniform bounds we will need are given by the following (Lemma 2.10 in \cite{shi} or Lemma 3.5 in \cite{wil}):
\begin{lemma}
Let $(M, h)$ be a Riemannian manifold, $K \subset \subset M$ and $(g_i(t), \phi_i(t))$ a sequence of harmonic Ricci flows defined on a neighbourhood of $K \times [a',b']$ with $[a',b']$ containing $0$. Suppose that for each $k \in \mathbb{N}$, we have the following conditions:
\begin{itemize}
\item[(a)] $C^{-1}h \le g_i(0)\le C h$ on $K$ for every $i$,
\item[(b)] $|\nabla^k g_i(0)|_h+|\nabla^k \phi_i(0)|_h \le C_k$ on $K$, for every $i$,
\item[(c)] $|\nabla_i \phi_i|_i\le C',|\nabla_i^{(k)} \Rm_i|_i+|\nabla_i^{(k+2)} \phi_i|_i \le C'_k$ on $ K \times [a',b']$,
\end{itemize}
where the constants are independent of $i$, but may depend on $K, a', b'$ and $k$. Then for all $k,s \in \mathbb{N}$ there exist constants $\tilde{C}, \tilde{C}_{k,s}$ independent of $i$ such that
\begin{itemize}
\item $\tilde{C}^{-1}h \le g_i(t)\le \tilde{C} h$ on $K \times [a',b']$ for every $i$,
\item We have on $K \times [a',b']$ for every $i$
\begin{equation*}
\Big| \frac{\partial^s}{\partial t^s}\nabla^k g_i \Big|_h+\Big|\frac{\partial^s}{\partial t^s}\nabla^k \phi_i \Big|_h \le \tilde{C}_{k,s}
\end{equation*}
\end{itemize}
\label{timesgluing}
\end{lemma}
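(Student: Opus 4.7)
The plan is to first establish the uniform metric equivalence, then bootstrap from it to control all spatial $h$-derivatives of $g_i$ and $\phi_i$, and finally use the flow equations to convert these into bounds on the mixed space-time derivatives.

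For the metric equivalence, the harmonic Ricci flow equation gives $\partial_t g_i = -2\mathcal{S}_i$, and by hypothesis (c) the tensor $\mathcal{S}_i = \Ric_{g_i} - \alpha_i \nabla\phi_i \otimes \nabla\phi_i$ satisfies $|\mathcal{S}_i|_{g_i} \le C_0$ uniformly on $K \times [a',b']$. Reading this as a bilinear-form inequality $-2C_0\, g_i \le \partial_t g_i \le 2C_0\, g_i$ and integrating from $0$ to $t$ gives $e^{-2C_0 L} g_i(0) \le g_i(t) \le e^{2C_0 L} g_i(0)$ on $K$, where $L = b'-a'$. Combined with condition (a) this yields the first conclusion $\tilde{C}^{-1} h \le g_i(t) \le \tilde{C} h$ with a constant $\tilde{C}$ independent of $i$.

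For the spatial $h$-derivative bounds, I would argue by induction on $k$ that $|(\nabla^h)^k g_i|_h$ and $|(\nabla^h)^k \phi_i|_h$ are uniformly bounded on $K \times [a',b']$. At each step, write $\nabla^h = \nabla^{g_i} - A_i$, where the difference $A_i = \Gamma^{g_i} - \Gamma^h$ is a polynomial in $g_i^{-1}$ and $\nabla^h g_i$; hence $(\nabla^h)^j A_i$ is polynomial in entries of $g_i^{-1}$ and $(\nabla^h)^{\le j+1} g_i$, all of which are controlled by the previous inductive level and Step~1. Differentiating the flow equations yields
\begin{equation*}
\partial_t \bigl((\nabla^h)^k g_i\bigr) = -2(\nabla^h)^k \mathcal{S}_i, \qquad \partial_t \bigl((\nabla^h)^k \phi_i\bigr) = (\nabla^h)^k \tau_{g_i}\phi_i,
\end{equation*}
and expanding the right-hand sides as polynomials in $\nabla^{g_i}$-derivatives of $\Rm_i$ and $\phi_i$ (bounded by (c)) and in the difference tensors $A_i$, we obtain linear ODEs of the schematic form $\partial_t |(\nabla^h)^k g_i|_h \le C_k \bigl(1 + |(\nabla^h)^k g_i|_h\bigr)$ with bounded coefficients. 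Closing Gronwall with the initial-time bounds from (b) produces the desired estimates; the same argument with $\tau_{g_i}\phi_i = \Delta_{g_i}\phi_i - A(\phi_i) g_i(\nabla\phi_i,\nabla\phi_i)$ in place of $\mathcal{S}_i$ handles $\phi_i$.

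For the time derivatives, once all spatial $h$-derivative bounds are available one simply differentiates the flow equations in $t$: both $\partial_t g_i = -2\mathcal{S}_i$ and $\partial_t\phi_i = \tau_{g_i}\phi_i$ are polynomial expressions in $\Rm_i$, $\nabla^2\phi_i$, $\nabla\phi_i$, $g_i^{-1}$, and in the smooth map $A(\phi_i)$ (with $\phi_i$ ranging over a compact subset of $N$), so iterated $\partial_t$-differentiation only produces $\nabla^{g_i}$-derivatives of $\Rm_i$ and $\phi_i$ of higher but bounded orders, together with factors of $\dot\alpha_i$ (controlled since $\alpha_i$ is non-increasing and bounded). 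Converting all intrinsic factors via Step~2 and proceeding by simultaneous induction on $(k,s)$ closes the argument. The main obstacle is the bookkeeping in Step~2: one must carefully keep track of the polynomial dependence of $(\nabla^h)^k \mathcal{S}_i$ and $(\nabla^h)^k \tau_{g_i}\phi_i$ on the difference tensors $A_i$ and hence on the lower-order $h$-derivatives of $g_i$ and $\phi_i$, ensuring at each inductive level that the Gronwall-type estimate closes; the harmonic-map terms built from the second fundamental form of $N$ and powers of $\nabla\phi_i$ add some complexity but no essential new difficulty beyond the Ricci-flow case.
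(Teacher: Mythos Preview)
The paper does not supply its own proof of this lemma; it is quoted verbatim from Shi \cite{shi} (Lemma~2.10) and Williams \cite{wil} (Lemma~3.5), so there is no in-paper argument to compare against. Your outline is the standard Hamilton-type argument (uniform metric equivalence via Gronwall, then inductive control of $h$-covariant derivatives using the difference tensor $A_i=\Gamma^{g_i}-\Gamma^h$, then time derivatives from the flow equations), and this is exactly the route taken in the cited references, so the approach is correct and essentially the expected one.

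One small point to tighten: your parenthetical ``$\dot\alpha_i$ controlled since $\alpha_i$ is non-increasing and bounded'' is not quite right---monotone and bounded gives $\dot\alpha_i\in L^1$ but not a pointwise bound, and higher time derivatives of $\alpha_i$ are not controlled at all by those hypotheses. In the actual application (the proof of the compactness theorem) only the $s=0,1$ cases are needed to run Arzel\`a--Ascoli, and for $s=1$ no derivative of $\alpha_i$ appears; smoothness in $t$ of the limit then comes from parabolic regularity once one knows it solves the harmonic Ricci flow with the limit coupling $\alpha_\infty$. If you want the lemma to hold literally for all $s\ge 2$ you should either add a hypothesis bounding the time derivatives of $\alpha_i$, or note explicitly that the $s\ge 2$ bounds require this.
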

\begin{proof}[Proof of Theorem \ref{compact1}]
Without loss of generality, we can assume (possibly after extracting a subsequence, still denoted by $i$) that the coupling functions $\alpha_i(t)$ converge pointwise in $(a,b)$ to a certain coupling function $\alpha_{\infty}(t)$, which is still non-increasing and verifies the same bounds as the $\alpha_i$'s.

\noindent We can assume that our sequence satisfies \textit{uniform derivative curvature bounds} by the remarks above. Therefore by Theorem \ref{compactnessmanifold} we can extract a subsequence such that $(M_i,g_i(0),p_i) \longrightarrow (M_{\infty},g_{\infty}(0),p_{\infty})$ in the pointed Cheeger-Gromov sense, for a certain pointed manifold $(M_{\infty},p_{\infty})$ and metric $g_{\infty}(0)$. Let us remark that $(M_{\infty},g_{\infty}(0))$ is complete by the theorem. Call $\Psi_i$ the diffeomorphisms given by the Cheeger-Gromov convergence. Since we have $\sup_{B_{g_i(0)}(p_i,s)}{|\nabla_i^k \phi_i|_{g_i}(0)}\le M(k,s)$, extracting a further subsequence we can assume $\Psi_i^* \phi_i(0) \longrightarrow \phi_{\infty}(0)$ in $C^{\infty}_{loc}(M_{\infty};N)$ for a certain map $\phi_{\infty}(0)$. To extend the convergence at the other times, it is sufficient to apply the Lemma \ref{timesgluing} to $\Psi_i^* g_i(t)$ and $\Psi_i^* \phi_i(t)$, so by Arzel\'{a}-Ascoli's theorem we get a limit flow $g_{\infty}(t)$ for $t \in (a,b)$ and a limit $\phi_{\infty}(t)$ of the maps $\Psi_i^* \phi_i(t)$ in $C^{\infty}_{loc}(M_{\infty}\times (a,b);N)$, which agree with the metric $g_{\infty}(0)$  and the map $\phi_{\infty}(0)$ at $t=0$ respectively, and form a harmonic Ricci flow.
\end{proof}
Once we have a limit harmonic Ricci flow, we can inquire its completeness. As remarked during the proof of the main Theorem \ref{compact1}, we already have completeness at the $(t=0)-$time-slice. A good idea to get completeness at other time-slices is to exploit the length distortion estimates obtained in the Section \ref{lengthdistortion} together with the Hopf-Rinow Theorem, as in \cite{top1}.
\begin{theorem}[Compactness of harmonic Ricci flows: Extension 2]
Under the same assumptions as in Theorem \ref{compact1}, suppose instead of having $(i)$ we have the stronger assumption that
\begin{itemize}
\item[(i')] There exists $M<\infty$ with the following property: $\forall r>0$ there exists $K=K(r) \in \mathbb{N}$ such that $\forall t \in (a,b)$ and $i \ge K$ we have
\begin{equation}
\sup_{B_{g_i(0)}(p_i,r)}{|\Rm(g_i(t))|_{g_i(t)}}\le M.
\end{equation}
\end{itemize}
Then the harmonic Ricci flow constructed in Theorem \ref{compact1} has complete time-slices.
\begin{proof}
Since $(M_{\infty},g_{\infty}(0))$ is complete, for every $r>0$ we have $B_{g_{\infty}(0)}(p_{\infty},\frac{r}{2}) \subset \subset M_{\infty}$. By the assumption $(i')$ and the convergence, we have $\sup_{B_{g_{\infty}(0)}(p_\infty,\frac{r}{2})}{|\Rm(g_{\infty}(t))|_{g_{\infty}(t)}}\le M$. Since $M$ is independent of $r$, the limit harmonic Ricci flow has bounded curvature at every time-slice. Using the length distortion estimate, for every $s>0$ we have $B_{g_{\infty}(t)}(p_\infty,s) \subseteq B_{g_{\infty}(0)}(p_\infty,C(M,t)s)\subset \subset M_{\infty}$.
\end{proof}
\label{compact2}
\end{theorem}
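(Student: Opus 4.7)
The plan is to establish completeness of $(M_\infty, g_\infty(t))$ at each $t \in (a,b)$ by combining a uniform curvature bound on the limit flow with the length distortion estimates from Section \ref{lengthdistortion} and the Hopf-Rinow theorem. The crucial point is that hypothesis $(i')$ provides a curvature bound $M$ that is \emph{independent of the radius} $r$, in contrast to $(i)$, so the limit flow inherits a global rather than merely local curvature bound.

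First I would verify that the limit flow has globally bounded Riemann tensor. Fix $t \in (a,b)$ and a point $x \in M_\infty$. Since $(M_\infty, g_\infty(0))$ is complete, $x$ lies in a precompact $g_\infty(0)$-ball $B_{g_\infty(0)}(p_\infty, r/2)$ for some $r$. Using the Cheeger-Gromov diffeomorphisms $\Psi_i$ produced in the proof of Theorem \ref{compact1}, this ball is eventually diffeomorphic to a subset of $B_{g_i(0)}(p_i, r)$, where $(i')$ gives $|\Rm(g_i(t))|\le M$. The $C^\infty_{loc}$-convergence of $\Psi_i^* g_i(t)$ to $g_\infty(t)$ passes the bound to the limit, yielding $|\Rm(g_\infty(t))|(x) \le M$. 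Since $M$ does not depend on $r$ (and hence on $x$), the bound is global on $M_\infty$.

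Next I would invoke the length distortion estimates. Since $\alpha \nabla \phi_\infty \otimes \nabla \phi_\infty$ is positive semidefinite, one has $\mathcal{S}_\infty \le \Ric_\infty \le K g_\infty$ for a constant $K = K(n, M)$, so Lemma \ref{lengthdist} yields $d_{g_\infty(0)}(x,y) \le e^{K t}\, d_{g_\infty(t)}(x,y)$ for $t\ge 0$. Hence $B_{g_\infty(t)}(p_\infty, s) \subseteq B_{g_\infty(0)}(p_\infty, e^{K t} s)$, whose closure is compact in $M_\infty$ by completeness of $g_\infty(0)$. The Hopf-Rinow theorem then gives completeness of $(M_\infty, g_\infty(t))$.

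The delicate case is $t < 0$: the same argument now requires the \emph{reverse} length distortion from the Remark after Lemma \ref{lengthdist}, which in turn needs a \emph{lower} bound $\mathcal{S}_\infty \ge -K' g_\infty$, hence a pointwise bound on $|\nabla \phi_\infty|^2$. To obtain one, I would apply Proposition A.5 of \cite{mul} after translating time so that the flow starts at some $t^* \in (a, t)$; because the global bound $R_0 = M$ from $(i')$ makes the constants in that proposition independent of the spatial radius, one gets $|\nabla \phi_\infty|^2$ uniformly bounded on $M_\infty$ over any closed subinterval of $(a, b)$. With this lower bound on $\mathcal{S}_\infty$ in hand, the length distortion estimate gives $d_{g_\infty(0)} \le e^{K'|t|} d_{g_\infty(t)}$ on the strip between $t$ and $0$, and the Hopf-Rinow argument closes the proof in this case as well.
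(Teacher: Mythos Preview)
Your proof is correct and follows the same strategy as the paper: transfer the uniform-in-$r$ curvature bound to the limit flow, then use the length-distortion estimates of Section~\ref{lengthdistortion} together with Hopf--Rinow to show that $g_\infty(t)$-balls sit inside precompact $g_\infty(0)$-balls. The paper's argument is more terse---it absorbs both time directions into a single constant $C(M,t)$ without comment---whereas you explicitly separate the case $t<0$ and supply the lower bound on $\mathcal S_\infty$ via a gradient estimate for $\phi_\infty$; one small point is that Proposition~A.5 of \cite{mul} is stated for \emph{complete} flows, so to avoid circularity it is cleanest to apply it to the complete approximating flows $(M_i,g_i,\phi_i)$ (where the constants are uniform in $r$ since $R_0=M$ is) and then pass the resulting bound on $|\nabla\phi|^2$ to the limit, rather than invoking it directly on $M_\infty$.
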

\begin{remark}
We remark that this version of the compactness theorem will be used in the proof of the pseudolocality theorem.
\end{remark}
In the theorem above we used the strong bound on the curvature to apply the length distortion estimate in Lemma \ref{lengthdist}. Since the hypothesis of the latter are weaker, we can improve the result, showing that a uniform unilateral bound on the tensors $\mathcal{S}_{g_i(t)}$ implies completeness in the past or the future.
\begin{theorem}[Compactness of harmonic Ricci flows: Extension 3]
Under the same assumptions as in Theorem \ref{compact1}, if there exists a constant $C>0$ such that
\begin{equation*}
\inf_i \inf_{M_i}{\mathcal{S}_{g_i(t)}} \ge -C \ \ \ \forall t \in (a,0]
\end{equation*}
then $(M_{\infty},g_{\infty}(t))$ is complete $\forall t \in (a,0]$. Analogously, the bound
\begin{equation*}
\sup_i \sup_{M_i}{\mathcal{S}_{g_i(t)}} \le C \ \ \ \forall t \in [0,b)
\end{equation*}
 implies completeness of time-slices for $t \in [0,b)$.
\end{theorem}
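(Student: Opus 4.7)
The plan is to reduce completeness of $(M_\infty, g_\infty(t))$ to completeness of $(M_\infty, g_\infty(0))$, which is already guaranteed by Theorem \ref{compact1}, by comparing distances at the two times via the Remark following Lemma \ref{lengthdist}. The key observation is that the one-sided bound on $\mathcal{S}$ postulated in the hypothesis is exactly the assumption that Lemma \ref{lengthdist} (and its Remark) requires.

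First, because $\Psi_i^* g_i(t) \to g_\infty(t)$ and $\Psi_i^* \phi_i(t) \to \phi_\infty(t)$ in $C^\infty_{\mathrm{loc}}(M_\infty \times (a,b))$, and since $\mathcal{S} = \Ric - \alpha \nabla\phi \otimes \nabla\phi$ is a smooth algebraic combination of these quantities (with $\alpha_i(t) \to \alpha_\infty(t)$ pointwise), the pointwise bound $\mathcal{S}_{g_i(t)} \ge -C$ passes to $\mathcal{S}_{g_\infty(t)} \ge -C$ on all of $M_\infty$ for every $t \in (a, 0]$. Applying the Remark to Lemma \ref{lengthdist} in the limit flow with $t_0 = t$ and $t_1 = 0$ yields
\[
d_{g_\infty(0)}(x,y) \le e^{-Ct}\, d_{g_\infty(t)}(x,y) \qquad \forall\, x, y \in M_\infty,\ t \in (a, 0],
\]
so that $B_{g_\infty(t)}(p_\infty, r) \subseteq B_{g_\infty(0)}(p_\infty, e^{-Ct} r)$ for every $r > 0$. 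Since $(M_\infty, g_\infty(0))$ is complete, Hopf-Rinow gives that the right-hand ball has compact closure in the underlying manifold topology (which coincides with the topology induced by $g_\infty(t)$). Hence $\overline{B_{g_\infty(t)}(p_\infty, r)}$ is a closed subset of a compact set, so itself compact. As $r$ was arbitrary, Hopf-Rinow applied in the reverse direction shows that $(M_\infty, g_\infty(t))$ is complete.

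The second part is handled symmetrically. Under the bound $\mathcal{S}_{g_i(t)} \le C$ on $[0, b)$, the same passage to the limit gives $\mathcal{S}_{g_\infty(t)} \le C$, and Lemma \ref{lengthdist} itself (applied with $t_0 = 0$, $t_1 = t \in [0, b)$) yields $d_{g_\infty(0)}(x, y) \le e^{Ct}\, d_{g_\infty(t)}(x, y)$. The identical ball-inclusion and Hopf-Rinow argument then provides completeness of $(M_\infty, g_\infty(t))$ for all $t \in [0, b)$.

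I do not anticipate any serious obstacle; the structure of the argument mirrors that of Theorem \ref{compact2}, with the strong two-sided curvature bound there weakened to a one-sided bound on the tensor $\mathcal{S}$ — precisely the setting in which Lemma \ref{lengthdist} still gives distance distortion in one time direction. The only minor subtlety is checking that the pointwise inequality on the symmetric $(0,2)$-tensor $\mathcal{S}$ survives smooth local convergence, and this is immediate from $C^\infty_{\mathrm{loc}}$ convergence of the metrics, maps, and coupling functions.
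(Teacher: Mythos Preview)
Your proof is correct and follows precisely the approach the paper indicates (but does not write out): namely, pass the one-sided bound on $\mathcal{S}$ to the limit, apply Lemma \ref{lengthdist} or its Remark to compare $g_\infty(t)$-balls with $g_\infty(0)$-balls, and conclude via Hopf--Rinow exactly as in the proof of Theorem \ref{compact2}. The paper's own justification is just the sentence preceding the theorem, and your argument fleshes it out faithfully.
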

\subsection{Type I and Type A Conditions}
\label{typeItypeA}
In this section we define what is a Type I singularity for the harmonic Ricci flow, and what is a Type I harmonic Ricci flow.
It is worth mentioning that in \cite{shi}, Shi gave a definition of Type I singularity expressed in terms of the so-called \textit{AC Curvature} defined as $Q(x,t) \coloneqq (|\Rm|+ |\nabla^2 \phi|+|\nabla \phi|^2)(x,t)$. In particular, he proved that if a harmonic Ricci flow develops a singularity at a time $T<+\infty$, then there exists a constant $c>0$ such that
\begin{equation}
\sup_{x \in M}{Q(x,t)} \ge \frac{c}{T-t}.
\end{equation}
However, motivated by Theorem \ref{longtime}, we would like a definition focusing only on the metric component. Therefore our definition will be different from this and coherent with the heuristic dominance of the metric component over the map one. Throughout this subsection we assume for simplicity that the domain manifold $M$ is closed, though we only need to use the strong maximum principle.
\begin{lemma}
Let $(g(t),\phi(t))$ be a solution of (\ref{hrf}) on $[0,T)$ with a non-increasing coupling function $\alpha(t) \in [\underline{\alpha},\overline{\alpha}]$, $0<\underline{\alpha}\le \overline{\alpha}<+\infty$. Suppose $T<+\infty$ is maximally chosen. Then there exists a constant $c>0$ and a sequence $(t_k)\nearrow T$ such that 
\begin{equation}
\sup_{x \in M}{|\Rm|(x,t_k)} \ge \frac{c}{T-t_k}.
\label{leastrate}
\end{equation}
\begin{proof}
Let us set 
\begin{equation}
y(t)\coloneqq \max_{x \in M}{|\nabla \phi|^2(x,t)}, \ w(t)\coloneqq \max_{x \in M}{|\nabla^2 \phi|^2(x,t)} \ and \ z(t)\coloneqq \max_{x \in M}{|\Rm|^2(x,t)}.
\label{definitionywz}
\end{equation}
The proof is split in two cases. Firstly, if we have that $y$ is unbounded, an application of the maximum principle to the equation (\ref{grad}) yields $y\ge \frac{c}{T-t}$, thus using (\ref{Sevolution}) as in the proof of Lemma \ref{rbounds} we get the desired bound. In the case $y$ is bounded, we can absorb it in the constants appearing in (\ref{riem}) and (\ref{hess}). Writing the differential inequalities for $\sqrt{w}$ and $\sqrt{z}$ and summing them up, we obtain again via a maximum principle argument that $2 \cdot max \{ z(t),w(t) \}\ge (z+w)(t) \ge \frac{c_1}{(T-t)^2}$. Proceeding now by contradiction, suppose that for every $\e>0$ there exists a $T(\e)$ such that $z(t) \le \frac{\e}{T-t}$ for $t \in [T(\e),T)$. Then rewrite (\ref{hess}) with this new bound to get:
\begin{equation}
\frac{\partial w}{\partial t} \le c \sqrt{z} w+cw \le c \sqrt{z} w \le \frac{c \e}{T-t} w, \ \ \ in \ \ [T(\e),T).
\end{equation}
One can apply Gronwall's Lemma to get $w(t) \le \frac{c}{(T-t)^\e}$ which is contradictory for $\e< \min \{ c_1,2 \}$.
\end{proof}
\end{lemma}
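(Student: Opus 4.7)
My plan is an ODE-comparison argument via the maximum principle on the evolving pointwise maxima $y(t):=\max_M |\nabla\phi|^2(\cdot,t)$, $w(t):=\max_M |\nabla^2\phi|^2(\cdot,t)$, and $z(t):=\max_M |\Rm|^2(\cdot,t)$, splitting into cases according to whether $y$ stays bounded on $[0,T)$ or not. Theorem \ref{longtime} already guarantees that $z$ is unbounded, and the goal is to upgrade this to the quantitative rate $\sqrt{z(t_k)}\ge c/(T-t_k)$ along a sequence $t_k\nearrow T$.

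Suppose first that $y$ is unbounded. Applying the maximum principle to (\ref{grad}), and using $|\Rm^N|\le C$ to control the target-curvature term by $C|\nabla\phi|^4$, I would obtain the barrier-sense inequality $y'(t)\le Cy(t)^2$. Negating the desired rate gives $y(t)(T-t)\to 0$ as $t\to T$; but then fixing $t_0$ close to $T$ with $Cy(t_0)(T-t_0)<1$ and comparing $y$ with the solution of $f'=Cf^2$ starting at $(t_0,y(t_0))$ would force $y$ to remain uniformly bounded on $[t_0,T)$, contradicting unboundedness. Hence $y(x_k,t_k)\ge c/(T-t_k)$ at the maximiser for some sequence $t_k\nearrow T$. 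To transfer this to $|\Rm|$, I would combine $\Sh\ge \Sh_{\min}(0)$ (a maximum-principle consequence of the evolution equation for $\Sh$, exploiting $-\dot\alpha\ge 0$) with $\Sc=\Sh+\alpha|\nabla\phi|^2$ to get $\Sc(x_k,t_k)\ge \underline{\alpha}\,y(x_k,t_k)-K_0\ge c'/(T-t_k)$ for $t_k$ close enough to $T$, and then use the pointwise bound $|\Rm|\ge c_n|\Sc|$ to close the case.

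In the opposite case $y\le Y$ on $[0,T)$, the factors $|\nabla\phi|^2$ and $|\nabla\phi|^4$ in (\ref{riem}) and (\ref{hess}) get absorbed into constants, leaving coupled differential inequalities for $z$ and $w$ in which the only mixed term is $|\Rm|\,|\nabla^2\phi|^2$. Taking maxima, rewriting in terms of $\sqrt{z}$ and $\sqrt{w}$ and adding, I would obtain a differential inequality of the schematic form $(z+w)'\le C(z+w)^{3/2}+\mathrm{l.o.t.}$; the analogous ODE-comparison argument, now with $3/2$-power nonlinearity yielding a quadratic blow-up rate, then produces $z(t)+w(t)\ge c_1/(T-t)^2$ for $t$ near $T$. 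If $z$ realises this bound along a subsequence, I am finished. Otherwise $w(t)\ge c_1/(2(T-t)^2)$ near $T$ while, if the lemma failed, $\sqrt{z}(t)\le \e/(T-t)$ on some interval $[T(\e),T)$ for any prescribed $\e>0$. Plugging the latter back into (\ref{hess}) and taking maxima yields $w'(t)\le \tfrac{C\e}{T-t}\,w(t)$, and Gronwall gives $w(t)\le K/(T-t)^{C\e}$. Choosing $\e$ so that $C\e<2$ contradicts the quadratic lower bound on $w$ as $t\to T$.

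The main obstacle, absent from the pure Ricci flow version, is precisely this coupling: the cross-term $|\Rm|\,|\nabla^2\phi|^2$ in (\ref{riem}) prevents a direct ODE-comparison for $z$ alone, and a priori the singularity forced by Theorem \ref{longtime} could be carried entirely by the hessian of the map rather than the Riemann tensor. The two-step trick of first producing a joint lower bound on $z+w$ and then excluding the hessian-dominated scenario by a separate Gronwall estimate is what resolves this, and picking $\e$ small enough in the Gronwall step so that $C\e<2$ is the delicate quantitative point.
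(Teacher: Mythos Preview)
Your argument is correct and follows essentially the same route as the paper: splitting on whether $y=\max_M|\nabla\phi|^2$ stays bounded, handling the unbounded case via ODE comparison on (\ref{grad}) and transferring to $|\Rm|$ through $\Sc=\Sh+\alpha|\nabla\phi|^2\ge \Sh_{\min}(0)+\underline{\alpha}|\nabla\phi|^2$, and in the bounded case first obtaining the joint lower bound $z+w\ge c_1/(T-t)^2$ and then eliminating the hessian-dominated scenario by a Gronwall estimate on (\ref{hess}) with $\e$ chosen so that the resulting exponent is below $2$. The only cosmetic difference is that the paper passes to $\sqrt{z}+\sqrt{w}$ rather than $(z+w)$ to derive the joint blow-up rate, which is equivalent.
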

Therefore we make the following definition.
\begin{definition}
We say that a harmonic Ricci flow $(g(t),\phi(t))$ develops a \textit{Type I singularity} at the finite time $T$ if there exist a constant  $c>0$ and a sequence $(t_k) \nearrow T$ such that 
\begin{equation}
\sup_{x \in M}{|\Rm|(x,t_k)} \sim \frac{c}{T-t_k}.
\end{equation}
\label{typeIsingdef}
\end{definition}
We are naturally tempted to define a Type I harmonic Ricci flow by ``reversing" the inequality (\ref{leastrate}); since our main scope will be to perform blow up arguments, we need to control all the components of the flow. Fortunately we have:
\begin{theorem}
Let $(g(t),\phi(t))$ be a solution of (\ref{hrf}) on $[0,T)$ with a non-increasing coupling function $\alpha(t) \in [\underline{\alpha},\overline{\alpha}]$, $0<\underline{\alpha}\le \overline{\alpha}<+\infty$. Suppose that there exist constants $r\ge 1$ and $C>0$ such that
\begin{equation}
|\Rm|(x,t) \le \frac{C}{(T-t)^r}.
\end{equation}
Then the same inequality, with a different constant $\tilde{C}=\tilde{C}(\underline{\alpha},\overline{\alpha},n,N,C,r)$ holds for $|\nabla \phi|^2$ and $|\nabla^2 \phi|$. Moreover
\begin{equation}
|\nabla \Rm|(x,t) \le \frac{C}{(T-t)^{\frac{3}{2} r}}.
\end{equation}
\begin{proof}
The proof is straight-forward and we therefore only give a brief sketch. Using the bound on the Riemann tensor, we get the same kind of bound for the scalar curvature, thus for $\Sh$ since $\Sh(x,t) \le \Sc(x,t)$. Since $\Sh_{min}(t)$ is increasing, we get the same bound on $|\nabla \phi|^2$. Using the bounds on $y$ and $z$ defined as in (\ref{definitionywz}) to rewrite (\ref{hess}), we get the same bound on $|\nabla^2 \phi|$ by applying the maximum principle to the function $f(x,t)\coloneqq (T-t)^{2r}|\nabla^2 \phi|^2(A+(T-t)^r|\nabla \phi|^2)$ for a large enough constant $A$. This proves the first statement. A similar argument leads to the second statement.
\end{proof}
\label{typerhrf}
\end{theorem}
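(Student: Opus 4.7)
The plan is to treat the three quantities sequentially, with Step~1 an algebraic consequence of the monotonicity of $\Sh_{\min}$ and Steps~2 and 3 Bernstein-type maximum principle arguments on cleverly weighted combinations. For Step~1, tracing the Riemann tensor bound gives $|\Sc|\le c_n C(T-t)^{-r}$, and since $\alpha|\nabla\phi|^2\ge 0$ we have $\Sh\le\Sc\le c_n C(T-t)^{-r}$. Conversely, the monotonicity of $\Sh_{\min}$ along the flow (used already in the proof of Lemma~\ref{rbounds}) gives the uniform lower bound $\Sh\ge\Sh_{\min}(0) =: -C_0$. Combining these, and using $r\ge 1$ together with the finiteness of $T$ to absorb $C_0$ into a multiple of $(T-t)^{-r}$,
\[
|\nabla\phi|^2 = \tfrac{1}{\alpha}(\Sc-\Sh) \le \tfrac{1}{\underline{\alpha}}\bigl(c_n C(T-t)^{-r} + C_0\bigr) \le \tilde C(T-t)^{-r}.
\]

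For Step~2, substituting Step~1's bounds into (\ref{hess}), the only term that cannot be absorbed directly into $\Delta|\nabla^2\phi|^2$ plus a bounded multiple of $|\nabla^2\phi|^2/(T-t)^r$ is the cubic-in-$\nabla\phi$ term $C|\nabla\phi|^4|\nabla^2\phi|$. The hint resolves this by considering
\[
f(x,t) := (T-t)^{2r}|\nabla^2\phi|^2\bigl(A + (T-t)^r|\nabla\phi|^2\bigr),
\]
with $A$ to be chosen large. Differentiating in $t$ and using (\ref{grad}) and (\ref{hess}), one exploits the good terms $-2|\nabla^3\phi|^2$ from (\ref{hess}) and $-2|\nabla^2\phi|^2$ from (\ref{grad}) to absorb both the dangerous $|\nabla\phi|^4|\nabla^2\phi|$ (via Young's inequality) and the gradient cross-term $2(T-t)^r\langle\nabla|\nabla^2\phi|^2,\nabla|\nabla\phi|^2\rangle$ coming from $\Delta f$, provided $A$ is large enough. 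This leads to $\partial_t f\le\Delta f + Cf + C'$ on $M\times[0,T)$, whence the maximum principle ($M$ is closed by assumption) gives $f\le C''$; since the second factor of $f$ is bounded below by $A>0$, this yields $|\nabla^2\phi|\le\tilde C(T-t)^{-r}$.

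Step~3 is a standard Shi-type estimate adapted to the harmonic Ricci flow setting. The evolution equation for $|\nabla\Rm|^2$ (derivable from M\"uller's formulas and analogous to (\ref{riem})) contains the good term $-2|\nabla^2\Rm|^2$ and bad terms schematically $C|\Rm||\nabla\Rm|^2$ plus terms involving $\nabla^k\phi$ for $k\le 3$, all of which are now controlled by the previous steps (the bound $|\nabla^3\phi|\le C(T-t)^{-r}$ coming from an entirely analogous Bernstein argument). Taking
\[
F(x,t) := (T-t)^{3r}|\nabla\Rm|^2 + B(T-t)^{2r}|\Rm|^2
\]
for a large enough constant $B$, the helpful $-2(T-t)^{2r}|\nabla\Rm|^2$ term generated by differentiating the second summand in time absorbs the bad $C|\Rm||\nabla\Rm|^2$ term produced by the first, and the maximum principle gives $F\le C$, hence $|\nabla\Rm|\le\tilde C(T-t)^{-3r/2}$. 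The main obstacle is thus the algebraic bookkeeping in Step~2: verifying that the cubic term $C|\nabla\phi|^4|\nabla^2\phi|$ can only be absorbed after introducing the multiplicative factor $A + (T-t)^r|\nabla\phi|^2$, whose time and space derivatives must in turn be shown not to destroy the resulting differential inequality. Everything else is a routine tracking of the scaling exponents.
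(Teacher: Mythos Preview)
Your argument tracks the paper's sketch essentially exactly: Step~1 is the same (trace, $\Sh\le\Sc$, monotonicity of $\Sh_{\min}$); your Step~2 test function $(T-t)^{2r}|\nabla^2\phi|^2\bigl(A+(T-t)^r|\nabla\phi|^2\bigr)$ is precisely the one the paper specifies; and for Step~3 the paper says only ``a similar argument leads to the second statement'', which your Shi-type combination makes explicit.

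One correctable slip in Step~3: you claim $|\nabla^3\phi|\le C(T-t)^{-r}$ can be obtained first by ``an entirely analogous Bernstein argument'' and then fed into the $|\nabla\Rm|$ estimate. Neither the exponent nor the decoupling is right. Scaling already suggests $|\nabla^3\phi|\sim(T-t)^{-3r/2}$, and more importantly the evolutions of $|\nabla\Rm|^2$ and $|\nabla^3\phi|^2$ are genuinely coupled: differentiating the $\alpha|\nabla^2\phi|^2|\Rm|$ term in (\ref{riem}) puts $|\nabla^3\phi|$ into the $|\nabla\Rm|^2$ inequality, while differentiating the $|\Rm||\nabla^2\phi|^2$ term in (\ref{hess}) puts $|\nabla\Rm|$ into the $|\nabla^3\phi|^2$ inequality. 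So you cannot bound either one separately first. The fix is minor and in the spirit of M\"uller's iterative scheme (Proposition~A.5 in \cite{mul}): run your Bernstein argument on the joint quantity
\[
F(x,t)\coloneqq (T-t)^{3r}\bigl(|\nabla\Rm|^2+|\nabla^3\phi|^2\bigr)+B(T-t)^{2r}\bigl(|\Rm|^2+|\nabla^2\phi|^2\bigr),
\]
so that the good terms $-2|\nabla\Rm|^2$ from (\ref{riem}) and $-2|\nabla^3\phi|^2$ from (\ref{hess}), weighted by $B(T-t)^{2r}$, absorb all the cross terms simultaneously. The rest of your outline goes through unchanged.
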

\begin{definition}
We say that a solution $(g(t),\phi(t))$ of (\ref{hrf}) on $[0,T)$ is a \textit{Type I harmonic Ricci flow} if there exists a constant $C>0$ such that
\begin{equation}
|\Rm|(x,t) \le \frac{C}{(T-t)}.
\label{definitiontypeI}
\end{equation}
We say that it is a \textit{Type A harmonic Ricci flow} if there exist constants $r \in [1,\frac{3}{2})$ and $C>0$ such that
\begin{equation}
|\Rm|(x,t) \le \frac{C}{(T-t)^r}.
\end{equation}
\end{definition}
A priori it might be that the supremum of the curvature oscillates between different rates, e.g. the Type I rate and a smaller one. We will exclude this phenomenon in the Type I flow case, see Corollary \ref{nonoscillation}, where we will see that every singularity in a harmonic Ricci flow satisfying (\ref{definitiontypeI}) is necessarily a Type I singularity as in Definition \ref{typeIsingdef}.
\section{Self-Similar Solutions}
\label{sss}
In this section we briefly recall the definitions of self-similar solutions for the harmonic Ricci flow as well as some of their properties following \cite{mul}. Then we will prove a Zhang-type theorem, generalising results from \cite{guo} and \cite{wil}.
\begin{definition}
Given a coupling function $\alpha(t)$, we say that a solution $(g(t),\phi(t))$ of (\ref{hrf}) defined on $[0,T)$ is a \textit{soliton solution} if there exist a family of diffeomorphisms $\psi_t \colon M \longrightarrow M$ with $t \in [0,T)$, and a  function $c \colon [0,T) \longrightarrow \R^{+}$ such that 
\begin{equation}
\begin{cases*}
g(t)=c(t)\psi_t^*g(0),\\
\phi(t)=\psi_t^* \phi(0).
\end{cases*}
\label{solitongeneral}
\end{equation}
If the derivative of $c$ verifies one of $dc/dt=\dot{c}<0$, $\dot{c}=0$ or $\dot{c}>0$, the solution is called \textit{shrinking, steady or expanding solution} respectively. If the diffeomorphisms $\psi_t$ generate a family of vector fields $X(t)$ with $X(t)=\nabla^{g(t)} f(t)$ for some function $f(t)$ on $M$, we call the solution a \textit{gradient} soliton solution, and the function $f(t)$ is called a \textit{potential} of the soliton solution.
\end{definition}
Self-similar solutions naturally arise from the symmetry properties of the harmonic Ricci flow equations. One important example is the so-called \textit{Gaussian soliton}: consider the time independent flat solution given by $(M,g(t),\phi(t),N) \equiv (\R^n,g_{euc},y_0,N)$, where $g_{euc}$ is the standard Euclidean metric on $\R^n$, $N$ is any target manifold, and $y_0$ is a constant map from $\R^n$ to $N$ with image given by $y_0$. For any coupling function $\alpha(t)$, we can consider this solution as a gradient shrinking solution, defining $g_{euc}=g(t)=c(t)\psi_t^*g_{euc}$, $\phi(t)=\psi_t^* y_0 \equiv y_0$, where $c(t)$ is any decreasing function with $c(0)=1$ and $\psi_t$ is the family of diffeomorphisms generated by the complete vector field $\nabla f(t)$ with $f(t)=\frac{-\dot{c}(t)}{4c(t)}|x|^2$. In the particular case in which $c(t)=T-t$ for some $T$, we will call it \textit{Gaussian soliton in canonical form}. The importance of the Gaussian soliton resides in the fact that it models any harmonic Ricci flow near regular space-time points.

Now we recall the following result which describes a system of elliptic equations solved by a gradient soliton, written in a different manner.
\begin{lemma}[Lemma $2.2$ in \cite{mul}]
Let $(g(t),\phi(t))$ be a gradient soliton with potential $f(t)$ defined on $[0,T)$. Then for any $t \in [0,T)$, the solution satisfies
\begin{equation}
\begin{cases}
\Ric_{g(t)}-\alpha(t) \nabla \phi(t) \otimes \nabla \phi(t)+\Hess(f(t))+ \sigma(t) g(t)=0,\\
\tau_{g(t)} \phi(t)-\scal{\nabla \phi(t)}{\nabla f(t)}=0,
\end{cases}
\label{solitonelliptic}
\end{equation}
where $\sigma(t)=\frac{\dot{c}(t)}{2c(t)}$. Conversely, fix smooth functions $f$ on $M \times [0,T)$, $\alpha$ and $\sigma$ on $[0,T)$. Given a smooth solution $(g(t),\phi(t))$ of (\ref{solitonelliptic}) for any $t$ (also smooth in $t$), suppose that $\nabla f(t)$ is a complete vector field with respect to $g(t)$ for any $t \in [0,T)$. Then we can write the solution $(g(t),\phi(t))$ as a gradient soliton solution to the harmonic Ricci flow with potential $f(t)$, coupling function $\alpha(t)$, and with $c(t)=\exp(2\Sigma(t))$, where $\Sigma(t)=\int_0^t{\sigma(s) ds}$. 
\label{solutionequivsoliton}
\end{lemma}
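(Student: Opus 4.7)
For the forward direction, I would differentiate the defining soliton identities $g(t) = c(t)\psi_t^* g(0)$ and $\phi(t) = \psi_t^* \phi(0)$ in $t$ and equate with the right-hand sides of (\ref{hrf}). At $t = 0$, where $\psi_0 = \mathrm{id}$ and $c(0) = 1$, invoking the standard identity $\mathcal{L}_{\nabla^{g} f}\, g = 2\Hess_{g} f$ one obtains
\[
-2\Ric_{g(0)} + 2\alpha(0)\,\nabla\phi(0) \otimes \nabla\phi(0) \;=\; \dot c(0)\, g(0) + 2\Hess_{g(0)} f(0),
\]
which, after rearranging with $\sigma(0) = \dot c(0)/2$, is exactly the first equation of (\ref{solitonelliptic}) at $t=0$. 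Likewise, $\partial_t \phi|_{t=0} = d\phi(0)\bigl(\nabla^{g(0)} f(0)\bigr) = \scal{\nabla\phi(0)}{\nabla f(0)}_{g(0)}$ combined with the map part of (\ref{hrf}) gives the second equation. For generic $t_0 \in [0,T)$, the elliptic equations at $t_0$ follow either by restarting the soliton ansatz there and repeating the argument, or by observing that $\Ric$, $\Hess$, $g$, $\nabla\phi\otimes\nabla\phi$, and $\tau_g\phi$ all transform equivariantly under the diffeomorphism $\psi_t$ and the scaling by $c(t)$, so the equation at $t_0$ is simply the $\psi_{t_0}^\ast$-pullback of the equation at $t=0$ modulo scaling factors that cancel.

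For the converse, I would set $c(t) := \exp(2\Sigma(t))$ (so $\dot c/c = 2\sigma$) and let $\psi_t$ be the family of diffeomorphisms defined by the ODE $\partial_t\psi_t = \bigl(\nabla^{g(t)} f(t)\bigr)\circ\psi_t$ with $\psi_0 = \mathrm{id}$; this flow is globally defined in $t$ thanks to the completeness hypothesis on $\nabla f(t)$. Introduce the candidate gradient soliton $\tilde g(t) := c(t)\psi_t^* g(0)$ and $\tilde\phi(t) := \psi_t^* \phi(0)$. Running the forward computation backwards, now using (\ref{solitonelliptic}) to replace $\Hess f$ in terms of $\Ric$, $\nabla\phi\otimes\nabla\phi$ and $g$, and replacing $\tau\phi$ via the second elliptic equation, a direct check shows that $(\tilde g, \tilde\phi)$ solves the harmonic Ricci flow (\ref{hrf}) with coupling $\alpha(t)$. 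Since $\tilde g(0) = g(0)$ and $\tilde\phi(0) = \phi(0)$, a uniqueness argument then identifies $(\tilde g, \tilde\phi)$ with $(g, \phi)$, producing the required soliton representation.

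The main obstacle is handling the self-referential generator $X(t) = \nabla^{g(t)} f(t)$ together with the time-dependent pullback $\psi_t^\ast$. The cleanest remedy is to concentrate the key computation at $t=0$, where $\psi_0 = \mathrm{id}$ trivializes the pullback, and then propagate to arbitrary $t_0$ via the equivariance of all tensors involved. A secondary subtlety in the converse is that the elliptic system (\ref{solitonelliptic}) alone does not determine $\partial_t g$ or $\partial_t \phi$; this forces one to construct the candidate $(\tilde g, \tilde \phi)$ via the explicit soliton ansatz and identify it with $(g, \phi)$ by uniqueness, rather than attempt to extract the parabolic harmonic Ricci flow equation directly from the elliptic system.
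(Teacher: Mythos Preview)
The paper does not give its own proof here; the lemma is quoted from \cite{mul} with only a remark on how the converse is restated. Your forward direction is the standard computation and is correct.

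Your converse has a genuine gap. You generate $\psi_t$ by the field $X(t)=\nabla^{g(t)}f(t)$ built from the \emph{given} family and set $\tilde g(t)=c(t)\,\psi_t^*g(0)$. Differentiating,
\[
\partial_t \tilde g \;=\; \dot c\,\psi_t^* g(0)\;+\;c\,\psi_t^*\bigl(\mathcal L_{\nabla^{g(t)}f(t)}\,g(0)\bigr),
\]
but for $t>0$ the Lie derivative on the right is not $2\Hess_{g(0)}f(0)$: the gradient is taken with respect to $g(t)$ while the tensor acted on is $g(0)$, so the identity $\mathcal L_{\nabla f}g=2\Hess f$ no longer applies, and the elliptic equation at time $t$---which controls $\Hess_{g(t)}f(t)$---is of no use here. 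Hence the ``direct check'' that $(\tilde g,\tilde\phi)$ solves (\ref{hrf}) does not go through past $t=0$. The final ``uniqueness'' step is also circular: uniqueness for (\ref{hrf}) requires $(g,\phi)$ to already solve (\ref{hrf}), which is part of what you want to conclude. In fact the converse as literally stated cannot hold without an extra hypothesis: take any gradient Ricci soliton $(g_0,f_0,\sigma_0)$ with $\Ric_{g_0}\not\equiv 0$, set $(g(t),\phi(t),f(t),\sigma(t))\equiv(g_0,y_0,f_0,\sigma_0)$ constant in $t$; then (\ref{solitonelliptic}) holds at every $t$, yet $\partial_t g=0\neq -2\Ric_{g_0}$. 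The standard remedy (M\"uller's) is to construct the soliton from the data at a \emph{single} time, generating $\psi_t$ by $c(t)^{-1}\nabla^{g(0)}f(0)$; then diffeomorphism- and scale-equivariance of $\Ric$, $\Hess$, $\nabla\phi\otimes\nabla\phi$ and $\tau_g\phi$, together with (\ref{solitonelliptic}) at $t=0$ only, yields (\ref{hrf}) for all $t$.
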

\begin{remark}
The converse direction of this lemma was stated differently in \cite{mul}, namely as follows: given a function $f$ and a solution of (\ref{solitonelliptic}) at time $t=0$, then there exists a family of diffeomorphisms $\psi_t$, $\psi_0=id$, such that if we define $(g(t),\phi(t))$ as in (\ref{solitongeneral}), with linear scaling function $c(t)=1+2\sigma(0)t$, then  $(g(t),\phi(t))$ solves the harmonic Ricci flow with (constant) coupling function $\alpha=\alpha(0)$. Clearly, this defines a soliton solution. Our version allows non-constant $\alpha$ and $\sigma$.
\end{remark}
\begin{definition}
We say that a gradient soliton solution is \textit{in canonical form} if the scaling function $c(t)$ is linear, in which case (\ref{solitonelliptic}) becomes
\begin{equation}
\begin{cases}
\Ric_{g(t)}-\alpha(t) \nabla \phi(t) \otimes \nabla \phi(t)+\Hess(f(t))+ \frac{a}{2(T-t)} g(t)=0,\\
\tau_{g(t)} \phi(t)-\scal{\nabla \phi(t)}{\nabla f(t)}=0,
\end{cases}
\label{solitoncanonical}
\end{equation}
where (possibly after scaling) $a=+1,0,-1$ respectively in the expanding, steady and shrinking case.
\end{definition}
Differently from what happens in the Ricci flow case, it is easy to construct a soliton solution which is not isometric to one in canonical form; for instance, take $M=N=S^2$, $\alpha(t)=1-t$, $g(t)=(1-t^2)g_S$, $\phi(t)=id_{S^2}$ where $g_S$ is the multiple of the standard metric with (constant) scalar curvature $2$.
\begin{definition}
Given constants $\alpha$ and $\sigma$ and a function $f$ on $M$, then a solution $(g,\phi)$ of 
\begin{equation}
\begin{cases}
\Ric_{g}-\alpha \nabla \phi \otimes \nabla \phi+\Hess(f)+ \sigma g=0,\\
\tau_{g} \phi-\scal{\nabla \phi}{\nabla f}=0
\end{cases}
\label{solitondefinition}
\end{equation}
is called \textit{gradient soliton}. The case $\sigma<0,\sigma=0,\sigma>0$ are called respectively \textit{shrinking, steady} and \textit{expanding} soliton. The function $f$ is called a \textit{potential} of the soliton.
\end{definition}
Lemma \ref{solutionequivsoliton} gives the relation between gradient soliton solutions and (complete) solitons. Recall the following simple consequence of the soliton equations.
\begin{proposition}[Section 2 in \cite{mul}]
Let $(g,\phi)$ be a gradient soliton, i.e. a solution of (\ref{solitondefinition}). Then we have
\begin{equation}
\begin{cases}
\Sc-\alpha |\nabla \phi|^2+\Delta f + \sigma n=0\\
\Sc-\alpha |\nabla \phi|^2+|\nabla f|^2+2 \sigma f=constant.
\end{cases}
\end{equation}
\label{solitonstrange}
\end{proposition}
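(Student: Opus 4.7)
The plan is to derive both identities from purely algebraic/differential manipulations of the soliton system (\ref{solitondefinition}), in the same spirit as the familiar computation for Ricci solitons. For the first identity, one simply traces the tensor equation with respect to $g$: since $g^{ij}g_{ij} = n$, $g^{ij}\nabla_i\phi\,\nabla_j\phi = |\nabla\phi|^2$, and $g^{ij}\nabla_i\nabla_j f = \Delta f$, the trace yields $\Sc - \alpha|\nabla\phi|^2 + \Delta f + \sigma n = 0$.

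For the second identity, I would take the divergence of the soliton equation in (\ref{solitondefinition}) and show that $\nabla_i(\Sc - \alpha|\nabla\phi|^2 + |\nabla f|^2 + 2\sigma f) \equiv 0$. The three ingredients are: the contracted second Bianchi identity $\nabla^j R_{ij} = \tfrac{1}{2}\nabla_i \Sc$; the symmetry of the Hessian of the map $\phi$ (viewed as a section of $\phi^*TN \otimes T^*M$), which gives
\begin{equation*}
\nabla^j\bigl(\alpha\scal{\partial_i\phi}{\partial_j\phi}\bigr) = \tfrac{\alpha}{2}\nabla_i|\nabla\phi|^2 + \alpha\scal{\partial_i\phi}{\tau_g\phi};
\end{equation*}
and the Bochner-type commutator identity $\nabla^j\nabla_i\nabla_j f = \nabla_i\Delta f + R_{ij}\nabla^j f$, obtained by first using $\nabla_i\nabla_j f = \nabla_j\nabla_i f$ and then commuting $\nabla^j\nabla_j$ past $\nabla_i$ (since $\sigma$ is constant, the divergence of $\sigma g_{ij}$ vanishes).

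Combining these contributions yields
\begin{equation*}
\tfrac{1}{2}\nabla_i \Sc - \tfrac{\alpha}{2}\nabla_i|\nabla\phi|^2 - \alpha\scal{\partial_i\phi}{\tau_g\phi} + \nabla_i\Delta f + R_{ij}\nabla^j f = 0.
\end{equation*}
Now I would use the soliton equation itself to replace $R_{ij}\nabla^j f$ by $\alpha\scal{\partial_i\phi}{\nabla\phi(\nabla f)} - \tfrac{1}{2}\nabla_i|\nabla f|^2 - \sigma\nabla_i f$, and then invoke the second equation of (\ref{solitondefinition}), $\tau_g\phi = \scal{\nabla\phi}{\nabla f}$, to cancel the two map-theoretic terms against each other. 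Finally I would differentiate the already-established first identity to substitute $\nabla_i\Delta f = -\nabla_i\Sc + \alpha\nabla_i|\nabla\phi|^2$; after collecting terms everything reduces (up to an overall factor of $-\tfrac{1}{2}$) to $\nabla_i(\Sc - \alpha|\nabla\phi|^2 + |\nabla f|^2 + 2\sigma f) = 0$, so the quantity is constant on $M$.

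The only point that needs genuine care rather than routine index gymnastics is the computation of $\nabla^j\bigl(\scal{\partial_i\phi}{\partial_j\phi}\bigr)$: one must use the connection induced on $\phi^*TN$, verify the symmetry $\nabla^\phi_i \partial_j \phi = \nabla^\phi_j \partial_i \phi$ of the second fundamental form of $\phi$, and recognise the appearance of $\tau_g\phi = g^{jk}\nabla^\phi_j \partial_k \phi$. Once that step is executed correctly, the soliton equation supplies the algebraic substitution that makes the divergence integrable, exactly as the usual second Bianchi / Bochner package does for the pure Ricci soliton case.
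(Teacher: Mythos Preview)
The paper does not actually supply a proof of this proposition; it is stated with a citation to M\"uller's original article and no argument is given here. Your proof is correct and is the standard one: tracing the tensor equation for the first identity, and for the second identity taking the divergence, using the contracted second Bianchi identity, the commutator $\nabla^j\nabla_i\nabla_j f = \nabla_i\Delta f + R_{ij}\nabla^j f$, the symmetry of the second fundamental form of $\phi$, and crucially the second soliton equation $\tau_g\phi = \scal{\nabla\phi}{\nabla f}$ to cancel the map terms. This is exactly how the result is established in M\"uller's paper, so your approach coincides with the cited source.
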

\begin{definition}
We call a gradient soliton \textit{normalized}, if the constant in the proposition above is $0$. We call a gradient soliton solution \textit{normalized} if its corresponding solitons are normalized at any time-slice.
\end{definition}
Any gradient soliton can be normalized by modifying its potential by a constant.

\subsection{Zhang Type Theorem}
\label{zhangtype}
Motivated by Zhang's paper \cite{zha}, we prove the following theorem. It is worth mentioning that in \cite{guo} the authors proved the first part of this result for List's flow, and our argument closely follows theirs. In addition, we also include a rigidity remark.
\begin{theorem}
Suppose  $(g(t),\phi(t),f(t))$ is a complete gradient shrinking soliton solution. Then we have $\Sh(t) \ge 0$ and $\nabla f(t)$ is a complete vector field at any time-slice. Moreover, if there exist $p \in M$ and $t \in [0,T)$ such that $\Sh(p,t)=0$, then $(M,g(t),\phi(t),f(t))$ is isometric to the Gaussian soliton.
\label{rigidity}
\end{theorem}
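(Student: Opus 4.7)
The plan is to work slice by slice, so fix $t_0 \in [0,T)$. By Lemma \ref{solutionequivsoliton}, the data $(g(t_0),\phi(t_0),f(t_0))$ satisfies the soliton system (\ref{solitondefinition}) with some $\sigma = \sigma(t_0) < 0$, and Proposition \ref{solitonstrange} supplies the identities $\Delta f = -\Sh - \sigma n$ and $\Sh + |\nabla f|^2 + 2\sigma f = \mathrm{const}$. The first step is to differentiate the tensor soliton equation: taking its divergence and combining the contracted second Bianchi identity with $\nabla^j(\nabla_i\phi \cdot \nabla_j\phi) = \tfrac{1}{2}\nabla_i|\nabla\phi|^2 + \langle \tau\phi, \nabla_i\phi\rangle$ and the map equation $\tau\phi = \langle\nabla\phi,\nabla f\rangle$, one obtains the first Hamilton-type identity $\tfrac{1}{2}\nabla_i \Sh = S_{ij}\nabla^j f$. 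Differentiating once more and substituting $\Hess f = -S - \sigma g$ from (\ref{solitondefinition}) yields the weighted elliptic identity
\begin{equation*}
\Delta_f \Sh = 2|\sigma|\Sh - 2|S|^2 - 2\alpha|\tau\phi|^2,
\end{equation*}
where $\Delta_f = \Delta - \langle\nabla f,\nabla\cdot\rangle$ is the drift Laplacian. This is the harmonic-Ricci analogue of Chen's soliton identity for the scalar curvature, with the additional term $-2\alpha|\tau\phi|^2$ coming from the map component.

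From this identity the nonnegativity $\Sh \ge 0$ follows by a minimum principle. On a compact slice a negative interior minimum contradicts the identity immediately, since the right-hand side is strictly negative there while $\Delta_f \Sh \ge 0$. In the complete noncompact case I would adapt Chen's cutoff argument, as implemented in this context in \cite{guo} and \cite{wil}: starting from any lower bound on $\Sh$, the identity $\Sh + |\nabla f|^2 + 2\sigma f = \mathrm{const}$ forces the potential $f$ to grow at most quadratically, so a barrier of the form $\Sh + \varepsilon \psi(f)$ attains an interior minimum to which the previous argument applies, and one sends $\varepsilon \to 0$. Once $\Sh \ge 0$ is known, the same Hamilton-type identity reduces to $|\nabla f|^2 \le C + 2|\sigma|f$, so that $\sqrt{f + C'}$ is globally Lipschitz with a uniform constant; integral curves of $\nabla f$ therefore traverse a bounded distance in bounded time and cannot escape the manifold in finite time, which proves the completeness of the vector field $\nabla f$.

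For the rigidity statement, suppose $\Sh(p,t_0)=0$. Then $p$ is an interior minimum of $\Sh$, and the elliptic identity gives $(\Delta_f - 2|\sigma|)\Sh \le 0$, so the strong minimum principle forces $\Sh \equiv 0$ on the connected component containing $p$. Reading the identity back, we must then have $|S|^2 \equiv 0$ and $\alpha|\tau\phi|^2 \equiv 0$; since $\alpha \ge \underline{\alpha} > 0$, this gives $S_{ij} \equiv 0$ and $\tau\phi \equiv 0$, and the first equation in (\ref{solitondefinition}) collapses to $\Hess f = |\sigma|g$ with $|\sigma|>0$. By Tashiro's rigidity theorem, a complete Riemannian manifold carrying a smooth function whose Hessian is a positive constant multiple of the metric is isometric to Euclidean $\R^n$ with $f$ a suitable paraboloid. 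Flatness then forces $\Ric \equiv 0$, and combined with $\Ric - \alpha\nabla\phi \otimes \nabla\phi = S_{ij} \equiv 0$ and $\alpha > 0$ we obtain $\nabla\phi \equiv 0$, so $\phi$ is constant at $t_0$; since a gradient soliton solution is determined by any single time-slice together with its functions $\alpha$ and $\sigma$ (via Lemma \ref{solutionequivsoliton}), the whole flow is the Gaussian soliton. The main obstacle I anticipate is making the noncompact minimum principle for $\Sh \ge 0$ rigorous — specifically, securing an a priori lower bound on $\Sh$ and arranging the barrier built from $f$ so that the cutoff argument really attains an interior infimum; both steps appear in \cite{guo} and \cite{wil} in closely related settings and should transfer here with only cosmetic modifications to accommodate the nontrivial map component.
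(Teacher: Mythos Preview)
Your overall architecture coincides with the paper's: both derive the drift-Laplacian identity $\Delta_f \Sh = 2|\sigma|\Sh - 2|S_{ij}|^2 - 2\alpha|\tau_g\phi|^2$ (this is exactly Lemma~\ref{ellipticsolitonlemma}), run a minimum principle to obtain $\Sh \ge 0$, deduce completeness of $\nabla f$ from the Hamilton identity $\Sh + |\nabla f|^2 + 2\sigma f = \mathrm{const}$, and finish rigidity via the strong minimum principle. The first-order identity $\tfrac{1}{2}\nabla_i\Sh = S_{ij}\nabla^j f$ you derive is correct and underlies the paper's computation as well.

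The substantive divergence is in the noncompact localisation, and here your sketch has a real gap. The paper follows Zhang~\cite{zha} (via~\cite{guo}) and uses a \emph{distance}-based cutoff: set $u = \Sh\cdot\psi(d(p,\cdot)/Ar_0)$, combine the elliptic identity with a Laplacian comparison for $d$ extracted from the soliton equation itself, and analyse the minimum of $u$; letting $A \to \infty$ gives $\Sh(p) \ge 0$ with no a~priori input on $\Sh$ or $f$. Your proposed barrier $\Sh + \varepsilon\psi(f)$ runs into the circularity you flag but do not resolve: for $\psi(f)$ to force an interior minimum you need $f$ to be proper, i.e.\ a \emph{lower} bound on its growth, whereas an assumed lower bound on $\Sh$ feeds into $|\nabla f|^2 \le C + 2|\sigma|f$ and delivers only an \emph{upper} bound on growth. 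Properness of $f$ (Cao--Zhou type estimates) is normally established only \emph{after} $\Sh \ge 0$ is in hand, so the $f$-barrier cannot bootstrap itself. The references you invoke do not supply an $f$-based argument either: \cite{guo} uses precisely the distance cutoff the paper reproduces. This step should be rewritten along the paper's lines.

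For the rigidity clause your route is actually tidier than the paper's. Once $\Sh \equiv 0$ forces $S_{ij} \equiv 0$, $\tau_g\phi \equiv 0$, and hence $\Hess f = |\sigma|\,g$, you invoke Tashiro's theorem to conclude $M \cong \R^n$ directly, then read $\nabla\phi \equiv 0$ off from $\Ric = 0$ together with $S_{ij}=0$. The paper instead observes the metric is flat, passes to the universal cover via Killing--Hopf, and uses strict convexity of the lifted potential to rule out nontrivial deck transformations; this reaches the same endpoint but is more roundabout.
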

Before proceeding with the proof we state a lemma, which is easy to prove using Proposition \ref{solitonstrange} and the Bochner identity (Section 4 in \cite{mul}) and whose proof is therefore left to the reader.
\begin{lemma}
For a normalized gradient shrinking soliton $(g,\phi,f)$ we have
\begin{equation}
\Delta \Sh - \langle \nabla \Sh, \nabla f \rangle=-2\sigma \Sh-2|S_{ij}|^2-2 \alpha (\tau_g \phi)^2.
\label{ellipticsoliton}
\end{equation}
\label{ellipticsolitonlemma}
\end{lemma}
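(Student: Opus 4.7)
The three assertions will be handled in sequence, with Lemma \ref{ellipticsolitonlemma} as the main engine for the first and third, and Proposition \ref{solitonstrange} (in its normalized form) as the main engine for the second. Throughout, we work at a fixed time slice; since nothing in the argument involves a time derivative, the time parameter $t$ plays no role beyond supplying the value $\sigma=\sigma(t)<0$ associated to a shrinker.

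\textbf{Step 1: $\Sh\ge 0$.} Rewrite the identity in Lemma \ref{ellipticsolitonlemma} as
\begin{equation*}
\Delta_{f}\Sh := \Delta \Sh-\langle \nabla \Sh,\nabla f\rangle = -2\sigma \Sh -2|S_{ij}|^2-2\alpha(\tau_g\phi)^2.
\end{equation*}
Since $\sigma<0$ and $\alpha>0$, if $\Sh$ attained a negative minimum at some $p_0\in M$, then at $p_0$ the left-hand side would be nonnegative while the right-hand side would be strictly negative, a contradiction. The real issue is non-compactness: we must promote this heuristic to an actual strict-sign conclusion without assuming the infimum is attained. The cleanest route is to apply the Omori--Yau maximum principle for the drift Laplacian $\Delta_f$ on $(M,g,e^{-f}dV)$, which holds under a Bakry--\'Emery lower bound that follows directly from the soliton equation (\ref{solitondefinition}): taking the trace with respect to $g$ gives $\Ric+\Hess(f)\ge \Ric_f^{BE}\ge \sigma g+\alpha\nabla\phi\otimes\nabla\phi\ge \sigma g$. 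Applying this to any negative minimizing sequence yields $\liminf_k [-2\sigma \Sh(x_k)-2|S_{ij}|^2(x_k)-2\alpha(\tau_g\phi)^2(x_k)]\le 0$, which forces $\Sh_{\inf}\ge 0$ once $\sigma<0$.

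\textbf{Step 2: completeness of $\nabla f$.} Using the normalized identity from Proposition \ref{solitonstrange},
\begin{equation*}
|\nabla f|^2 = -\Sh-2\sigma f,
\end{equation*}
and $\Sh\ge 0$, we get $|\nabla f|^2\le -2\sigma f$. After adding a sufficiently large constant to $f$ (which is allowed since the soliton equations involve only $\nabla f$ and $\Hess f$) we may assume $f\ge 1$, so that $|\nabla\sqrt{f}|^2\le -\sigma/2$ is a bounded function. Integrating along minimizing geodesics from a fixed basepoint gives the quadratic growth bound $f(q)\le C(1+d(p,q)^2)$, hence the linear bound $|\nabla f|(q)\le C(1+d(p,q))$. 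A vector field with such linear growth on a complete Riemannian manifold is complete by the standard ODE continuation argument.

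\textbf{Step 3: rigidity.} Suppose $\Sh(p,t)=0$ for some $p\in M$. The drift-elliptic inequality $\Delta_f \Sh+2\sigma \Sh\le 0$ of Step 1 combined with $\Sh\ge 0$ allows the strong maximum principle (Hopf) to force $\Sh\equiv 0$ on that time slice. Substituting back into Lemma \ref{ellipticsolitonlemma} and using $\alpha>0$, the right-hand side gives $|S_{ij}|\equiv 0$ and $\tau_g\phi\equiv 0$. The first soliton equation in (\ref{solitondefinition}) then reduces to
\begin{equation*}
\Hess(f)=-\sigma g,
\end{equation*}
with $-\sigma>0$ a nonzero constant. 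On a complete Riemannian manifold the existence of a non-constant function $f$ with $\Hess(f)$ a nonzero constant multiple of the metric forces, by the classical Tashiro theorem, $(M,g)$ to be isometric to $(\R^n,g_{\text{euc}})$ with $f$ a quadratic in the Euclidean coordinates. Finally, $S_{ij}\equiv 0$ reads $\Ric=\alpha\nabla\phi\otimes\nabla\phi$, and on flat $\R^n$ this gives $\nabla\phi\equiv 0$, so $\phi$ is constant; the solution is the Gaussian soliton.

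\textbf{Main obstacle.} The only genuinely delicate point is the non-compact maximum principle in Step 1; everything afterwards is driven by the algebra of Lemma \ref{ellipticsolitonlemma} and a classical rigidity theorem. The existing proofs of Zhang's theorem for Ricci and List flows handle this by exploiting the Bakry--\'Emery Ricci lower bound implicit in the soliton equation, and the same mechanism is available here because the coupling term $\alpha\nabla\phi\otimes\nabla\phi$ enters the lower bound on $\Ric+\Hess(f)$ with a favourable sign.
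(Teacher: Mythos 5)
Your proposal does not prove the statement in question. The statement is Lemma \ref{ellipticsolitonlemma} itself, i.e.\ the drift--elliptic identity
\begin{equation*}
\Delta \Sh - \langle \nabla \Sh, \nabla f \rangle=-2\sigma \Sh-2|S_{ij}|^2-2 \alpha (\tau_g \phi)^2
\end{equation*}
for a normalized gradient shrinking soliton. What you have written is a proof plan for Theorem \ref{rigidity}, in which this identity is quoted as a black box from the very first line (``Rewrite the identity in Lemma \ref{ellipticsolitonlemma} as\dots''). Nowhere do you derive the identity, so as a proof of the lemma the proposal is empty; this is a genuine gap, not a different route. (The paper itself leaves the verification to the reader, indicating that it follows from Proposition \ref{solitonstrange} together with the Bochner-type identity of Section 4 in \cite{mul}.)

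For the record, the intended argument runs as follows. Differentiating the normalized identity $\Sh+|\nabla f|^2+2\sigma f=0$ from Proposition \ref{solitonstrange} and substituting the soliton equation $\nabla_i\nabla_j f=-S_{ij}-\sigma g_{ij}$ gives $\nabla_i \Sh=2S_{ij}\nabla_j f$. Taking the divergence of this relation, and using the coupled contracted Bianchi (Bochner) identity $\nabla^j S_{ij}=\tfrac12\nabla_i\Sh-\alpha\,\tau_g\phi\,\nabla_i\phi$ together with the soliton equation once more, one gets
\begin{equation*}
\Delta\Sh=\langle\nabla\Sh,\nabla f\rangle-2\alpha\,\tau_g\phi\,\langle\nabla\phi,\nabla f\rangle-2|S_{ij}|^2-2\sigma\Sh,
\end{equation*}
and the second soliton equation $\tau_g\phi=\langle\nabla\phi,\nabla f\rangle$ turns the middle term into $-2\alpha(\tau_g\phi)^2$, which is exactly the claimed identity. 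Your Steps 1--3 (Omori--Yau/Bakry--\'Emery argument, completeness of $\nabla f$, and the rigidity via Tashiro) concern Theorem \ref{rigidity} and should be assessed against that theorem's proof, not against this lemma.
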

\begin{proof}[Proof of Theorem \ref{rigidity}]
For the first part of the theorem we will work in a fixed time-slice, so without loss of generality we can suppose to have $(g,\phi,f)$ satysfying (\ref{solitondefinition}) with $\sigma=-1/2$. Rewrite Lemma \ref{lengthdist2} in the following way: (for any fixed time slice), if $\Ric \le K$ in the ball $B(p,r_0)$, then
\begin{equation*}
\Delta d(p, \cdot) \le \bigg((n-1)r_0^{-1}+\frac{2}{3}Kr_0\bigg)-\int_0^{d(p,\cdot)}{\Ric(\dot{\gamma}(s),\dot{\gamma}(s)) ds},
\end{equation*}
where $\gamma$ is a normalized geodesic starting at $p$. Applying the first of the soliton equations to the couple $(\dot{\gamma},\dot{\gamma})$ we get $\mathcal{S}(\dot{\gamma},\dot{\gamma})+\nabla^2 f(\dot{\gamma},\dot{\gamma})=1/2$. Moreover, we have
\begin{align*}
\frac{d}{ds} f(\gamma(s))&=\langle \nabla f,\dot{\gamma} \rangle,\\
\frac{d^2}{ds^2} f(\gamma(s))&=\langle \nabla_{\dot{\gamma}} \nabla f,\dot{\gamma} \rangle=\nabla^2 f(\dot{\gamma},\dot{\gamma}),
\end{align*}
therefore for every $x \in M$
\begin{align*}
\int_0^{d(p,x)}{\Ric(\dot{\gamma}(s),\dot{\gamma}(s)) ds}&= \frac{1}{2}d(p,x)-\int_0^{d(p,x)}{\frac{d^2}{ds^2} f(\gamma(s)) ds}+ \alpha \int_0^{d(p,x)}{d\phi \otimes d \phi(\dot{\gamma},\dot{\gamma}) ds}\\
&\ge \frac{1}{2}d(p,x)- \langle \nabla f,\dot{\gamma} \rangle \Big|_{s=0}^{s=d(p,x)}
\ge  \frac{1}{2}d(p,x)- \langle \nabla f(x),\nabla_x d(p,x) \rangle-|\nabla f(p)|.
\end{align*}
In other words, for any fixed point $p \in M$ the function $d(x) \coloneqq d(p,x)$ satisfies
\begin{equation}
\Delta d- \langle \nabla f,\nabla d \rangle \le \bigg((n-1)r_0^{-1}+\frac{2}{3}Kr_0\bigg)-\frac{1}{2}d+|\nabla f(p)|.
\end{equation}
For every point $p$ there exists a small enough radius $r_0=r_0(p)>0$ such that $\Ric(g) \le (n-1)r_0^{-2}$ on the ball $B(p,r_0)$. Fix a cut-off function $\psi(x)$ which is equal to $1$ if $|x|\le 1$ and zero for $|x|>2$, is non-increasing on the positive axis and such that $|\psi'|^2/\psi\le 4$, $|\psi''|,|\psi'|\le 2$. Define then the function on $M$
\begin{equation*}
\eta(x) \coloneqq \psi \bigg(\frac{d(p,x)}{Ar_0} \bigg),
\end{equation*}
where $A$ is a large constant. Define $u\coloneqq \Sh \eta$. It easily follows that
\begin{equation}
\Delta u=\eta \Delta \Sh+\frac{\Sh \psi'}{Ar_0} \Delta d+\frac{\Sh \psi''}{(Ar_0)^2}+2\langle \nabla \Sh,\nabla \eta \rangle.
\end{equation}
Now if the minimum of $u$, which exists because it is a function of compact support, is non negative, then $\Sh(p) \ge 0$ and we have the first assertion since the point $p$ can be chosen arbitrarily. Therefore it is enough to consider the case in which this minimum is strictly negative. Thus the point of minimum $p_{min}$ must be in the support of $\eta$, so it must be in $B(p,2Ar_0)$. We have $\nabla u=0$, $\Delta u \ge 0$, so $\nabla \Sh= \frac{-\Sh \nabla \eta}{\eta}$. Clearly, we also have $u \psi' \ge 0$. Using Lemma \ref{ellipticsolitonlemma} above, we get
\begin{align*}
\Delta u&=\eta(\langle \nabla \Sh, \nabla f \rangle+\Sh-2|S_{ij}|^2-2 \alpha (\tau_g \phi)^2)+\frac{u \psi'}{Ar_0\eta} \Delta d+\frac{u \psi''}{(Ar_0)^2 \eta}+2\langle \nabla \Sh,\nabla \eta \rangle\\
&\le \eta(\langle \nabla \Sh, \nabla f \rangle+\Sh-2|S_{ij}|^2)+\frac{u \psi'}{Ar_0\eta} \Delta d+\frac{u \psi''}{(Ar_0)^2 \eta}-2S \frac{|\nabla \eta|^2}{\eta}\\
&= \eta(\langle \nabla \Sh, \nabla f \rangle+\Sh-2|S_{ij}|^2)+\frac{u \psi'}{Ar_0\eta} \Delta d+\frac{u \psi''}{(Ar_0)^2 \eta}-2u \frac{|\psi'|^2}{\eta^2(Ar_0)^2}\\
&\le \eta(\langle \nabla \Sh, \nabla f \rangle+\Sh-2|S_{ij}|^2)+\frac{u \psi'}{Ar_0\eta}(\langle \nabla f,\nabla d \rangle +2nr_0^{-1}+|\nabla f(p)|)+\frac{u \psi''}{(Ar_0)^2 \eta}-2u \frac{|\psi'|^2}{\eta^2(Ar_0)^2}.
\end{align*}
Remark that at the point $p_{min}$, the first term on the right hand side is $-\eta \Sh \frac{\langle \nabla \eta,\nabla f \rangle}{\eta}=-\frac{u \psi'}{Ar_0\eta}\langle \nabla f,\nabla d \rangle$, so we have a cancellation. Rewrite the inequality as
\begin{equation}
u-2\eta|S_{ij}|^2+\frac{2n u \psi'}{Ar_0^2\eta}+\frac{u \psi'}{Ar_0\eta}|\nabla f(p)|+\frac{u \psi''}{(Ar_0)^2 \eta}-2u \frac{|\psi'|^2}{\eta^2(Ar_0)^2} \ge 0.
\label{3.9}
\end{equation}
Using Cauchy-Schwarz $-2 \eta |S_{ij}|^2 \le -2 \eta \Sh^2/n=-2u^2/(n \eta)$, thus multiplying (\ref{3.9}) by $\eta$ we get
\begin{equation*}
\eta u-\frac{2u^2}{n}+\frac{2n u \psi'}{Ar_0^2}+\frac{u \psi'}{Ar_0}|\nabla f(p)|+\frac{u \psi''}{(Ar_0)^2}-2u \frac{|\psi'|^2}{\eta(Ar_0)^2} \ge 0,
\end{equation*}
and exploiting the bound on the cut-off function
\begin{equation*}
-\frac{2u^2}{n}+\frac{4n u}{Ar_0^2}+\frac{2|u|}{Ar_0}|\nabla f(p)|+6 \frac{|u|}{(Ar_0)^2} \ge 0,
\end{equation*}
so we must have
\begin{equation*}
|u| \le \frac{C(|\nabla f(p)|,n)}{Ar_0^2},
\end{equation*}
for any constant $A$, so $\Sh(p)=u(p) \ge u(p_{min})\ge -\frac{C(|\nabla f(p)|,n)}{Ar_0^2}$ and since this estimate is uniform in $A$, passing to the limit $A \rightarrow \infty$ we get $\Sh(p) \ge 0$. Since $p$ can be chosen arbitrarily this is valid on the whole $M$, concluding the proof of the first assertion.
\bigskip

Recall that (\ref{solitonstrange}) states $\Sh+|\nabla f|^2=f$, so the previous step yields $|\nabla f|^2 \le f$. Equivalently, we have that $|\nabla(\sqrt{f})| \le \frac{1}{2}$, therefore $\sqrt{f}$ grows at most linearly being a Lipschitz function, and so does $|\nabla f| \le \sqrt{f}$; clearly, this implies that $\nabla f$ is a complete vector field.
\bigskip

Finally, suppose there exists a $p \in M$ such that $\Sh(p)=0$. From the first part of the theorem, such a point is a minimum for the $u$ defined above for every $A$. By the strong maximum principle, which we can apply since the completeness of the metric implies $u$ has compact support, we have $u \equiv 0$ for every $A$, so $\Sh \equiv 0$ on $B(p,Ar_0)$ for every $A$, that is $\Sh \equiv 0$ on $M$. Restoring the time dependence, from the equation (\ref{ellipticsoliton}) we have $\mathcal{S}=0$ and $\tau_g \phi=0$, so $g(t)=g_0$ and $\phi(t)=\phi_0$. Rewriting the soliton solutions equation we get 
\begin{equation*}
\Hess(f(t))=-\sigma(t)g_0.
\end{equation*}
Being a shrinking soliton solution, we know that $\sigma(t)<0$ for any $t$, thus $\Sigma(t)=\int_0^t{\sigma(s) ds}$ is monotone decresing as is $c(t) \coloneqq \exp(2 \Sigma(t))$. Defining $\Psi_t$ as the diffeomorphism induced by $\nabla f (t)/c(t)$ (recall it is a complete vector field), we have $\hat{g}=(\Psi_t^*)^{-1}g_0=id$, thus $\hat{g}$ is flat. From the Killing-Hopf Theorem, its universal cover is the Euclidean space $(\R^n,g_{can})$, with covering map $\pi$. Pulling back the above equation on $\R^n$ via $\pi$, we obtain that the function $\pi^*f$ is strictly convex. Therefore $\pi$ must be trivial, otherwise $\pi^* f$ would be periodic, and thus a global isometry. Since $\Sh\equiv \Sc\equiv 0$ and $\alpha \neq 0$, we deduce that $\phi_0=y_0$ is a constant map.
\end{proof}

\section{Reduced Length and Volume Based at Singular Time}
\label{srl}
\subsection{Reduced Distance in Harmonic Ricci Flow}
The concepts of reduced length and volume in Ricci flow were introduced by Perelman in \cite{per}; he proved the monotonicity of this volume, a property which played a key role in his proof of one of the local $\kappa-$non-collapsing theorems essential for his resolution of the Poincar\'e and Thurston Geometrization Conjectures. From the singularities analysis point of view, this concept has the limitation of being based at a regular time: a suitable blow up limit of a Ricci flow has constant reduced volume, thus from the theory of Perelman it is a gradient shrinking soliton solution, but if the blow up procedure is done at a regular space-time point, the curvature boundedness implies directly that this limit is a Gaussian soliton. Enders (in \cite{end}) and Naber (in \cite{nab}) therefore independently introduced a concept of reduced length based at a singular time for Ricci flow, which allows to rescale around singular points where the blow up limits can be non-flat.

\noindent Analogous concepts for harmonic Ricci flow were introduced by M\"uller in \cite{mul1}, where the author proved a monotonicity result and a local $\kappa-$non-collapsing theorem; we refer the reader to it for further background reading. In this section we develop concepts of reduced length and volume based at singular time for the harmonic Ricci flow. Throughout the rest of the paper the harmonic Ricci flows we are considering are \textit{complete} unless otherwise stated.
\begin{proof}[Proof of Theorem \ref{redlensingtime}]
Fix a sequence of time $t_i \nearrow T$ and a point $p\in M$. To simplify notation let $l_i \coloneqq l_{p,t_i}$ and $L_i \coloneqq L_{p,t_i}$. Fix an arbitrary compact subset $K=K_1 \times [a,b] \subset M \times (0,T)$. The plan of the proof is to apply Arzel\`a-Ascoli's theorem to the sequence, once we have proved the necessary bounds. By definition of $l_i$ it suffices to uniformly bound $L_i$ on $K$. Let $\eta \colon [0,1] \rightarrow M$ be a $g(0)-$geodesic with $\eta(0)=q$, $\eta(1)=p$. Fix $k \in (b,T)$ and consider
\begin{equation}
\gamma(t) \coloneqq 
\begin{cases*}
\eta\big(\frac{t-\bar{t}}{k-\bar{t}}\big) \ \ t\in [\bar{t},k], \\
p \ \ \ \ \ \ \ \ \ t \in (k,t_i].
\end{cases*}
\end{equation}
Because $|\eta'(s)|^2_{g(0)}$ is constant (depending on $d_0(p,q)$, and hence only on $K$), there exists a constant $D=D(C,K)$ such that $|\eta'(s)|^2_{g(t)}\le D$ because of the uniform equivalence of the metrics along the harmonic Ricci flow on $[0,k]$. An application of Theorem \ref{typerhrf}, using the Type A assumption, implies the existence of a constant $C=C(\underline{\alpha},\overline{\alpha},n,N,C,r)$ such that $|\Sh|(x,t) \le \frac{C}{(T-t)^r}$, so:
\begin{align}
|L_i(q,\bar{t})| &\le \Bigg| \int_{\bar{t}}^{t_i}{\sqrt{t_i-t}\big(|\dot{\gamma}(t)|^2_{g(t)}+\Sh_{g(t)}(\gamma(t)) \big) dt}  \Bigg| \notag\\ 
&\le \int_{\bar{t}}^{k}{\frac{\sqrt{t_i-t}}{(k-\bar{t})^2} \bigg| \eta' \bigg( \frac{t-\bar{t}}{k-\bar{t}}\bigg) \bigg|^2_{g(t)} dt}+ C \int_{\bar{t}}^{t_i}{\frac{\sqrt{t_i-t}}{(T-t)^r} dt} \\
&\le \frac{D \sqrt{T}}{k-b}+\frac{2C}{3-2r}T^{\frac{3}{2}-r} \eqqcolon E(\underline{\alpha},\overline{\alpha},n,N,C,r,K) \notag
\end{align}
thus the uniform bound is proved. 
\bigskip

Now we would like a uniform bound on the derivatives of the $L_i$. Let $\gamma_i$ be an $\mathcal{L}-$minimizing $\mathcal{L}-$geodesic from $(q,\bar{t})$ to $(p,t_i)$, where $(q,\bar{t}) \in K$. 

We claim there exists a constant $G=G(\underline{\alpha},\overline{\alpha},n,N,C,r,K)>0$ independent of $i$ such that for all $t \in [\bar{t},k]$, $|\sqrt{t_i-t} \dot{\gamma_i}(t)|^2_{g(t)} \le G$. Denote by $V_i(t) \coloneqq \sqrt{t_i-t} \dot{\gamma_i}(t)$. Using the $\mathcal{L}-$geodesic equation (cfr. formula (4.3) in \cite{mul1})
\begin{equation}
\nabla_{V_i(t)} V_i(t) -2 \sqrt{t_i-t} \mathcal{S}_{g(t)}(V_i(t),\cdot)^\# -\frac{1}{2}(t_i-t) \nabla \Sh_{g(t)}=0,
\end{equation}
we obtain
\begin{align}
\frac{d}{dt} |V_i(t)|^2_{g(t)}&=-2\mathcal{S}(V_i(t),V_i(t))+2 \langle \nabla_{\dot{\gamma_i}(t)} V_i(t),V_i(t) \rangle_{g(t)} \notag\\
&= -2\mathcal{S}(V_i(t),V_i(t))+\frac{2}{\sqrt{t_i-t}} \langle \nabla_{V_i(t)} V_i(t),V_i(t) \rangle_{g(t)} \notag\\
&= -2\mathcal{S}(V_i(t),V_i(t))+\frac{2}{\sqrt{t_i-t}} \langle 2 \sqrt{t_i-t} \mathcal{S}_{g(t)}(V_i(t),\cdot)^\#+\frac{1}{2}(t_i-t) \nabla \Sh_{g(t)},V_i(t) \rangle_{g(t)} \label{boundgradredlen}\\
&= 2\mathcal{S}(V_i(t),V_i(t))+\sqrt{t_i-t} \langle \nabla \Sh_{g(t)},V_i(t) \rangle_{g(t)} \notag \\
&\le \frac{C_1}{(T-t)^r}|V_i(t)|^2_{g(t)}+\frac{C_2}{(T-t)^{\frac{3}{2} r-\frac{1}{2}}}|V_i(t)|_{g(t)}. \notag
\end{align}
Here the constants depend upon the Type A constant $C$ as in Theorem \ref{typerhrf} but are independent of $i$. Having a uniform bound on $V_i(t)$ for $t$ in a compact set of time is necessary to exploit this inequality. We have
\begin{equation}
\int_{\bar{t}}^{t_i}{\frac{1}{\sqrt{t_i-t}} |V_i(t)|^2_{g(t)} dt}=\mathcal{L}(\gamma_i)-\int_{\bar{t}}^{t_i}{\sqrt{t_i-t}\Sh_{g(t)}(\gamma_i(t)) dt} \le \mathcal{L}(\gamma_i)+\frac{2C_1}{3-2r}T^{\frac{3}{2}-r}
\end{equation}
by the Type A assumption, thus the integral mean value theorem gives the existence of $\hat{t_i} \in [\bar{t},k]$ such that 
\begin{equation} 
\frac{1}{\sqrt{t_i-\hat{t_i}}} |V_i(\hat{t_i})|^2_{g(\hat{t_i})}=\frac{1}{k-\bar{t}}\int_{\bar{t}}^{k}{\frac{1}{\sqrt{t_i-t}} |V_i(t)|^2_{g(t)} dt} \le \frac{1}{k-\bar{t}} \bigg(\mathcal{L}(\gamma_i)+\frac{2C_1}{3-2r}T^{\frac{3}{2}-r}\bigg)
\end{equation}
for $i$ large, and since $\sqrt{t_i-\hat{t_i}} \le \sqrt{T}$, we get $|V_i(\hat{t_i})|^2_{g(\hat{t_i})} \le F$ for some constant $F=F(\underline{\alpha},\overline{\alpha},n,N,C,r,K)$. Without loss of generality we can assume that $|V_i(t)|^2_{g(t)} \ge 1$, so the inequality (\ref{boundgradredlen}) becomes for $t \in [a,k]$
\begin{equation}
\frac{d}{dt}|V_i(t)|^2_{g(t)} \le \bigg( \frac{C_1}{(T-k)^r}+\frac{C_2}{(T-k)^{\frac{3}{2} r-\frac{1}{2}}} \bigg)|V_i(t)|^2_{g(t)}=C_3 |V_i(t)|^2_{g(t)}.
\end{equation}
The constant $C_3$ depends on $\underline{\alpha},\overline{\alpha},n,N,C,r,K$. Integration  implies that for all $t \in [a,b] \subset [a,k]$
\begin{equation}
|V_i(t)|^2_{g(t)} \le F e^{C_3(t-\hat{t_i})} \le F e^{C_3T} \eqqcolon G=G(\underline{\alpha},\overline{\alpha},n,N,C,r,K).
\end{equation}
In order to get uniform gradient bounds for $L_i$, we use the first variation formula of the functional $\mathcal{L}$ to obtain $\nabla L_i(q,\bar{t})=-2 \sqrt{t_i-\bar{t}} \dot{\gamma_i}(\bar{t})$, thus $|\nabla L_i(q,\bar{t})| \le \sqrt{2G}$.
Regarding the time derivative bounds we proceed as follows 
\begin{align*}
\frac{\partial}{\partial \bar{t}}L_i(q,\bar{t})&=\frac{d}{d\bar{t}}L_i(q,\bar{t})-\langle \nabla L_i(q,\bar{t}),\dot{\gamma_i}(\bar{t}) \rangle_{g(\bar{t})}\\
&=- \sqrt{t_i-\bar{t}}( |\dot{\gamma_i}(\bar{t})|^2_{g(\bar{t})}+\Sh_{g(\bar{t})}(\gamma_i(\bar{t})) )+2\sqrt{t_i-\bar{t}} |\dot{\gamma_i}(\bar{t})|^2_{g(\bar{t})}\\
&=\frac{1}{\sqrt{t_i-\bar{t}}} |\sqrt{t_i-\bar{t}} \dot{\gamma_i}(\bar{t})|^2_{g(\bar{t})}-\sqrt{t_i-\bar{t}} \Sh_{g(\bar{t})}(\gamma_i(\bar{t}))
\end{align*}
therefore using the Type A bound and what we obtained before, we have for any $(q,\bar{t}) \in K$
\begin{equation}
\frac{\partial}{\partial \bar{t}}L_i(q,\bar{t}) \le \frac{G}{\sqrt{k-b}}+\frac{C}{(T-b)^{r-\frac{1}{2}}} \eqqcolon H=H(\underline{\alpha},\overline{\alpha},n,N,C,r,K).
\end{equation}
Thus for any compact set $K$, the $C^{0,1}$ norm of $l_i$ are uniformly bounded (in terms of $\underline{\alpha},\overline{\alpha},n,N,C,r,K$) and we can extract a limit by Arzel\`a-Ascoli's Theorem.
It remains to show that the differential inequalities for $l_i$ pass to the limit. This follows analogously to the Ricci flow case in \cite{end} and we therefore leave this part of the proof to the reader.
\end{proof}
\begin{remark}
We stress that the uniform bound of the $C^1_{loc}$-norm of the $l_i$, which brings another bound on the $C^{0,1}_{loc}$-norm of the $l_{p,T}$, depends on $\underline{\alpha},\overline{\alpha},n,N$ and on the Type A constants $C,r$.
\label{constantdependence}
\end{remark}

\subsection{Reduced Volume Based at Singular Time}
\label{volume}

\begin{definition}
Let $(M,g(t),\phi(t))$ be a Type A harmonic Ricci flow on $(0,T)$. Fix a point $p \in M$, a sequence $t_i \nearrow T$ and any $l_{p,T}$ and $v_{p,T}$ given by the Theorem \ref{redlensingtime}. We define a \textit{reduced volume based at the singular time} $(p,T)$ to be the function 
\begin{equation}
V_{p,T}(\bar{t}) \coloneqq \int_M{v_{p,T}(q,\bar{t}) \ dvol_{g(\bar{t})}(q)}= \int_M{(4\pi(T-\bar{t}))^{-\frac{n}{2}}e^{-l_{p,T}(q,\bar{t})} \ dvol_{g(\bar{t})}(q)}.
\end{equation}
\end{definition}
From the work in \cite{mul} and Fatou's Lemma, we know that $V_{p,T}(\bar{t}) \le 1$. The proof of the next result reads the same as in \cite{end}, hence we skip it here.
\begin{corollary}
We have $\frac{d}{d\bar{t}} V_{p,T}(\bar{t}) \ge 0$ and $\lim_{\bar{t} \nearrow T} V_{p,T}(\bar{t}) \le 1$.
\end{corollary}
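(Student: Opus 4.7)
The strategy follows Enders \cite{end}: derive both claims by passing to the limit from M\"uller's monotonicity of reduced volume based at the regular times $t_i\nearrow T$. By the harmonic Ricci flow monotonicity formula of \cite{mul1}, each $V_{p,t_i}(\bar{t})$ is non-decreasing on $(0,t_i)$ and satisfies $\lim_{\bar{t}\nearrow t_i}V_{p,t_i}(\bar{t})=1$, so $V_{p,t_i}(\bar{t})\le 1$ throughout $(0,t_i)$.

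The key step is to upgrade the $C^0_{loc}$ convergence $l_{p,t_i}\to l_{p,T}$ from Theorem \ref{redlensingtime} (together with the trivial convergence of the prefactor $(4\pi(t_i-\bar{t}))^{-n/2}$) to convergence of integrals
\[
V_{p,t_i}(\bar{t})\longrightarrow V_{p,T}(\bar{t})\qquad\text{for every fixed }\bar{t}\in(0,T).
\]
For this I would establish a Perelman-type Gaussian lower bound
\[
l_{p,t_i}(q,\bar{t})\ge\frac{d_{g(\bar{t})}(p,q)^{2}}{C(T-\bar{t})}-C,
\]
uniform in $i$, by plugging a suitable test curve from $(q,\bar{t})$ to $(p,t_i)$ into the definition of $\mathcal L$ and controlling the $\Sh$-contribution with the Type~A bound $|\Sh|\le C(T-t)^{-r}$ (which is integrable in time since $r<3/2$) together with the distance-distortion estimates from Subsection \ref{lengthdistortion}. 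The resulting Gaussian upper bound on $v_{p,t_i}(\cdot,\bar{t})$ is integrable against $dvol_{g(\bar{t})}$ uniformly in $i$, so the dominated convergence theorem yields the claimed convergence of $V_{p,t_i}(\bar{t})$.

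Passing to the limit in $V_{p,t_i}(\bar{t}_1)\le V_{p,t_i}(\bar{t}_2)$ for $0<\bar{t}_1<\bar{t}_2<T$ then gives $V_{p,T}(\bar{t}_1)\le V_{p,T}(\bar{t}_2)$, which is the statement $\tfrac{d}{d\bar{t}}V_{p,T}(\bar{t})\ge 0$ in the distributional sense. Combined with the a priori bound $V_{p,T}\le 1$ recorded immediately before the corollary (Fatou's lemma applied to $v_{p,t_i}\to v_{p,T}$), this monotonicity forces $\lim_{\bar{t}\nearrow T}V_{p,T}(\bar{t})$ to exist and to be at most $1$. The main technical obstacle is the Gaussian lower bound on $l_{p,t_i}$: unlike in Ricci flow the integrand $\Sh$ in $\mathcal L$ need not have a sign, so one must exploit the Type~A estimate carefully to absorb it into the additive constant; once this is in place the rest of the argument is identical to the Ricci flow case in \cite{end}.
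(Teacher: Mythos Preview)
Your overall strategy is exactly what the paper intends: it simply states that the proof reads the same as in \cite{end}, i.e.\ one passes M\"uller's monotonicity of $V_{p,t_i}$ to the limit via a uniform integrable majorant for $v_{p,t_i}(\cdot,\bar t)$, together with the Fatou bound $V_{p,T}\le 1$ already recorded before the corollary. So your approach and the paper's coincide.

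There is, however, a slip in your description of how to obtain the Gaussian lower bound on $l_{p,t_i}$. ``Plugging a suitable test curve from $(q,\bar t)$ to $(p,t_i)$ into the definition of $\mathcal L$'' produces an \emph{upper} bound on the infimum $L_{p,t_i}(q,\bar t)$, not a lower one. What you actually need is a lower bound on $\mathcal L(\gamma)$ valid for \emph{every} admissible curve $\gamma$: one controls the $\Sh$-contribution uniformly by the Type~A estimate (the integral $\int\sqrt{t_i-t}\,(T-t)^{-r}\,dt$ is bounded since $r<3/2$), and then applies Cauchy--Schwarz to the kinetic term $\int\sqrt{t_i-t}\,|\dot\gamma|^2\,dt$ together with the distance-distortion estimates to extract a term comparable to $d_{g(\bar t)}(p,q)^2/(T-\bar t)$. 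Once this is corrected, the remainder of your argument (dominated convergence, passing the inequality $V_{p,t_i}(\bar t_1)\le V_{p,t_i}(\bar t_2)$ to the limit, and reading off the existence of $\lim_{\bar t\nearrow T}V_{p,T}(\bar t)\le 1$) is correct and matches the Ricci flow argument in \cite{end}.
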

We still need an analogous result in the case the reduced volume is constant in an interval. In order to derive it, we notice that if $V(t_1)=V(t_2)$ then $\square^*v_{p,T}=0$ in $(t_1,t_2)$, so parabolic regularity implies $l_{p,T}$ is smooth. For the convenience of the reader we recall this equation 
\begin{equation}
-\frac{\partial l_{p,T}(q,\bar{t})}{\partial \bar{t}}-\Delta l_{p,T}(q,\bar{t})+|\nabla l_{p,T}(q,\bar{t})|^2-\Sh_{g(\bar{t})}+ \frac{n}{2(T-\bar{t})} = 0.
\end{equation}
Combining with the last equality in Theorem \ref{redlensingtime}, we deduce
\begin{equation}
w_{p,T} \coloneqq \big( (T-\bar{t})(2\Delta l_{p,T}-|\nabla l_{p,T})|^2 +\Sh) + l_{p,T}-n \big)v_{p,T} \equiv 0.
\end{equation}
\begin{theorem}
Suppose $v(q,t)=(4 \pi(T-t))^{-\frac{n}{2}}e^{-l(q,t)}$ solves the adjoint heat equation $\square^* v=0$ under the harmonic Ricci flow. Then defining $w \coloneqq \big( (T-t)(2\Delta l-|\nabla l|^2 +\Sh) + l-n \big)v$ as above, we have
\begin{equation}
\square^* w= -2(T-t) \bigg[ \bigg| S_{ij}+\nabla_i \nabla_j l-\frac{g_{ij}}{2(T-t)} \bigg|^2+\alpha|\tau_g\phi-\nabla_i\phi\nabla_i l|^2-\dot{\alpha}|\nabla \phi|^2 \bigg] v.
\end{equation}
\begin{proof}
Recall that
\begin{equation}
\frac{d \Delta}{dt}=2S_{ij}\nabla_i \nabla_j -2 \alpha \tau_g \phi \nabla_i \phi \nabla_i.
\end{equation}
By definition of $v$, we know that $v^{-1} \nabla v =\nabla l$. Hence we get
\begin{align*}
v^{-1} \square^*w=& \ v^{-1} (-\partial_t-\Delta+\Sh) \Big( \big( (T-t)(2\Delta l-|\nabla l|^2+\Sh) + l-n\big)v \Big)\\
=&-(\partial_t+\Delta) \big( (T-t)(2\Delta l-|\nabla l|^2+\Sh) + l\big)-2\langle \nabla \big( (T-t)(2\Delta l-|\nabla l|^2+\Sh) + l\big),v^{-1} \nabla v \rangle\\
=& \ 2\Delta l-|\nabla l|^2+\Sh-(T-t)(\partial_t+\Delta)(2\Delta l-|\nabla l|^2+\Sh)-(\partial_t+\Delta)l\\
&+2(T-t)\langle \nabla (2\Delta l-|\nabla l|^2+\Sh),\nabla l \rangle+2 |\nabla l|^2.
\end{align*}
Let us analyze more carefully the term $(\partial_t+\Delta)(2\Delta l-|\nabla l|^2+\Sh)$ (for convenience in the time derivative calculation, note that $|\nabla l|^2=|dl|^2$):
\begin{align*}
(\partial_t+\Delta)(2\Delta l-|\nabla l|^2+\Sh)=& \ 2\partial_t(\Delta)l +2\Delta(\partial_t+\Delta)l -(\partial_t+\Delta)|dl|^2 +(\partial_t+\Delta)\Sh\\
=& \ 4S_{ij}\nabla_i \nabla_j l -4 \alpha \tau_g \phi \nabla_i \phi \nabla_i l+2 \Delta(|\nabla l|^2-\Sh)-2\mathcal{S}(dl,dl)-2 \langle \nabla(\partial_t l),\nabla l \rangle \\
&- \Delta(|\nabla l|^2)+\Delta \Sh+\Delta \Sh+2|S_{ij}|^2+2\alpha|\tau_g \phi|^2-\dot{\alpha}|\nabla \phi|^2\\
=& \ 4S_{ij}\nabla_i \nabla_j l -4 \alpha \tau_g \phi \nabla_i \phi \nabla_i l+ \Delta(|\nabla l|^2)-2\mathcal{S}(dl,dl)\\
&-2 \langle \nabla(-\Delta l+|\nabla l|^2-\Sh),\nabla l \rangle+2|S_{ij}|^2+2\alpha|\tau_g \phi|^2-\dot{\alpha}|\nabla \phi|^2,
\end{align*}
where we used $\square^* v=0$ in the last equality (written in term of $l$). Substituting this we obtain
\begin{align*}
v^{-1} \square^*w=& \ 2\Delta l-|\nabla l|^2+\Sh-(T-t)\Big[4S_{ij}\nabla_i \nabla_j l -4 \alpha \tau_g \phi \nabla_i \phi \nabla_i l+ \Delta(|\nabla l|^2)-2\mathcal{S}(dl,dl)\\
&-2 \langle \nabla(-\Delta l+|\nabla l|^2-\Sh),\nabla l \rangle+2|S_{ij}|^2+2\alpha|\tau_g \phi|^2-\dot{\alpha}|\nabla \phi|^2 \Big]-\frac{n}{2(T-t)}+\Sh-|\nabla l|^2\\
&+2(T-t)\langle \nabla (2\Delta l-|\nabla l|^2+\Sh),\nabla l \rangle+2 |\nabla l|^2\\
=&-\frac{n}{2(T-t)}+\Big[ 2\Delta l- |\nabla l|^2+\Sh+\Sh-|\nabla l|^2\Big]-(T-t)\Big[4S_{ij}\nabla_i \nabla_j l -4 \alpha \tau_g \phi \nabla_i \phi \nabla_i l\\
&+ \underline{\Delta(|\nabla l|^2)}-\underline{2\mathcal{S}(dl,dl)}-\underline{2 \langle \nabla(\Delta l),\nabla l \rangle}+2|S_{ij}|^2+2\alpha|\tau_g \phi|^2-\dot{\alpha}|\nabla \phi|^2 \Big]\\
=&-\frac{n}{2(T-t)}+2(\Delta l+\Sh)-(T-t)\Big[4S_{ij}\nabla_i \nabla_j l -4 \alpha \tau_g \phi \nabla_i \phi \nabla_i l+2|S_{ij}|^2+2\alpha|\tau_g \phi|^2\\
&-\dot{\alpha}|\nabla \phi|^2+I \Big],
\end{align*}
where $I$ is the sum of the underlined terms, i.e. 
\begin{align*}
I&=\Delta(|\nabla l|^2)-2\mathcal{S}(dl,dl)-2 \langle \nabla(\Delta l),\nabla l \rangle= \nabla_i \nabla_i(\nabla_j l \nabla_j l)-2S_{ij} \nabla_i l \nabla_j l-2\nabla_i \nabla_j \nabla_j l \nabla_i l\\
&=2\nabla_i \nabla_i \nabla_j l \nabla_j l+2\nabla_i \nabla_j l \nabla_i \nabla_j l-2R_{ij} \nabla_i l \nabla_j l+2 \alpha \nabla_i \phi \nabla_i l \nabla_j \phi \nabla_j l -2\nabla_i \nabla_i \nabla_j l \nabla_j l -2 R_{ijjk} \nabla_k l \nabla_i l\\
&= 2|\Hess(l)|^2+2 \alpha \nabla_i \phi \nabla_i l \nabla_j \phi \nabla_j l.
\end{align*}
Now we can plug this expression in the equation above, obtaining
\begin{align*}
v^{-1} \square^*w=&-\frac{n}{2(T-t)}+2(\Delta l+\Sh)-(T-t)\Big[4S_{ij}\nabla_i \nabla_j l -4 \alpha \tau_g \phi \nabla_i \phi \nabla_i l+2|S_{ij}|^2+2\alpha|\tau_g \phi|^2-\dot{\alpha}|\nabla \phi|^2\\
&+2|\Hess(l)|^2+2 \alpha \nabla_i \phi \nabla_i l \nabla_j \phi \nabla_j l \Big]\\
=&-2(T-t)\bigg[ \bigg( |S_{ij}|^2+|\nabla_i l \nabla_j l|^2+ \frac{n}{4(T-t)^2}+2S_{ij}\nabla_i \nabla_j l-\frac{\Delta l}{T-t}-\frac{\Sh}{T-t} \bigg)\\
&+\alpha \big(\nabla_i \phi \nabla_i l \nabla_j \phi \nabla_j l-2\tau_g \phi \nabla_i \phi \nabla_i l+|\tau_g \phi|^2 \big)-\dot{\alpha}|\nabla \phi|^2 \bigg]\\
=&-2(T-t)\bigg[ \bigg| S_{ij}+\nabla_i \nabla_j l -\frac{g_{ij}}{2(T-t)} \bigg|^2+\alpha | \tau_g \phi-\nabla_i \phi \nabla_i l |^2-\dot{\alpha}|\nabla \phi|^2 \bigg]. \qedhere
\end{align*}
\end{proof}
\label{conjheatlocvol}
\end{theorem}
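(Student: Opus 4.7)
The strategy is a direct computation. Write $w = Pv$ with $P \coloneqq (T-t)(2\Delta l - |\nabla l|^2 + \Sh) + l - n$. The product rule together with $\square^* v = 0$ gives
\[
v^{-1}\square^*(Pv) \;=\; -(\partial_t+\Delta)P \;-\; 2\langle \nabla P,\, v^{-1}\nabla v\rangle \;=\; -(\partial_t+\Delta)P \;+\; 2\langle \nabla P,\nabla l\rangle,
\]
since $v^{-1}\nabla v = -\nabla l$ by the definition of $l$. The whole proof then reduces to expanding $(\partial_t+\Delta)P$ and reassembling the resulting terms.

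\textbf{Evolution ingredients.} I would use three inputs in the expansion. First, the PDE for $l$ coming from $\square^* v = 0$, namely $-\partial_t l - \Delta l + |\nabla l|^2 - \Sh + \tfrac{n}{2(T-t)} = 0$, used wherever $(\partial_t+\Delta)l$ appears. Second, the commutator along the harmonic Ricci flow $\tfrac{d\Delta}{dt} = 2 S_{ij}\nabla_i\nabla_j - 2\alpha(\tau_g\phi)\nabla_i\phi\,\nabla_i$, obtained from $\partial_t g^{ij}=2S^{ij}$ and the evolution of the Christoffel symbols. Third, the evolution $\partial_t\Sh = \Delta\Sh + 2|S_{ij}|^2 + 2\alpha|\tau_g\phi|^2 - \dot\alpha|\nabla\phi|^2$ recalled in Section \ref{results}; this is what ultimately produces the $|S_{ij}|^2$, $\alpha|\tau_g\phi|^2$ and $-\dot\alpha|\nabla\phi|^2$ contributions that must appear in the final identity.

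\textbf{Completing squares via Bochner.} The decisive algebraic step is the Bochner identity
\[
\Delta|\nabla l|^2 \;=\; 2|\Hess l|^2 + 2\langle \nabla\Delta l,\nabla l\rangle + 2\Ric(\nabla l,\nabla l).
\]
The Hessian term produces the $|\nabla_i\nabla_j l|^2$ piece of the matrix square, while $\Ric(\nabla l,\nabla l)$ combines with the $\alpha\nabla\phi\otimes\nabla\phi$ contribution arising from the commutator so that every stress-energy term appears in the right combination $S_{ij}=R_{ij}-\alpha\nabla_i\phi\,\nabla_j\phi$. After substituting the PDE for $l$, using $2\langle\nabla\Delta l,\nabla l\rangle$ to cancel against part of $2\langle \nabla P,\nabla l\rangle$, and tracking the constant $n/(4(T-t)^2)$ generated by the trace of the $g_{ij}/(2(T-t))$ shift, the remainder should regroup as
\[
\Bigl|S_{ij}+\nabla_i\nabla_j l - \tfrac{g_{ij}}{2(T-t)}\Bigr|^2 \;+\; \alpha\bigl|\tau_g\phi-\nabla_i\phi\,\nabla_i l\bigr|^2 \;-\; \dot\alpha|\nabla\phi|^2,
\]
the whole expression multiplied by $-2(T-t)$.

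\textbf{Main obstacle.} There is no conceptual difficulty; the calculation is long and very sign-sensitive, with roughly a dozen terms that must cancel or combine in pairs. The most delicate part will be assembling the $\alpha$-dependent pieces into the perfect square $\alpha|\tau_g\phi-\nabla_i\phi\,\nabla_i l|^2$: the cross term $-2\alpha\tau_g\phi\,\nabla_i\phi\,\nabla_i l$ has to materialise with the right coefficient from the commutator $\partial_t\Delta$ acting on $l$, while the quadratic $\alpha|\nabla_i\phi\,\nabla_i l|^2$ must be produced from the $\alpha\nabla\phi\otimes\nabla\phi$ part of $\mathcal{S}$ through the Bochner-generated Ricci contraction. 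A useful consistency check at the end is to specialise to $\phi$ constant and $\alpha\equiv 0$, in which case the formula must reduce to Perelman's classical identity for the adjoint heat kernel under Ricci flow.
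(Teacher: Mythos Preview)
Your proposal is correct and follows essentially the same route as the paper: product rule with $\square^* v=0$, expansion of $(\partial_t+\Delta)P$ using the commutator $\partial_t\Delta$, the evolution of $\Sh$, the Bochner identity, and completion of squares into the two perfect squares plus the $-\dot\alpha|\nabla\phi|^2$ term. Your sign $v^{-1}\nabla v=-\nabla l$ is in fact the correct one (the paper's stated sign is a typo, though its subsequent computation is consistent with yours).
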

In the next theorem, we follow the argument in \cite{cho} for the similar Ricci flow case.
\begin{theorem}
Suppose we are given a complete gradient shrinking soliton solution in canonical form, i.e. a solution of (\ref{solitoncanonical}) with $a=1$. Then for any point $p \in M$, its reduced length based at the singular space-time point $(p,T)$ equals the soliton potential plus the normalization constant. In particular, it is independent of the point $p$.
\begin{proof}
We know that $g(t)=(T-t) \psi^*_t g(0)$, $\phi(t)=\psi^*_t \phi(0)$ and $f(t)=\psi_t^* f(0)$, where $\psi_t$ is the diffeomorphism generated by $\nabla f(0)/(T-t)$. Therefore, defining $\tau=T-t$, we know that $\frac{\partial f}{\partial \tau}=-|\nabla f|^2$. Without loss of generality, we can suppose the soliton solution to be normalized (note that this requires adding a constant independent of time), so $\Sh+|\nabla f|^2-\frac{1}{\tau}f=0$. Given any curve $\gamma \colon [0,\bar{\tau}] \longrightarrow M$, such that $\gamma(0)=p$, $\gamma(\bar{\tau})=q$, we have
\begin{align*}
\frac{d}{d\tau}(\sqrt{\tau}f(\gamma(\tau),\tau))&=\sqrt{\tau} \bigg( \frac{f}{2 \tau}+\frac{\partial f}{\partial \tau}+\nabla f \cdot \dot{\gamma} \bigg)=\sqrt{\tau} \Big( \frac{f}{2 \tau}+\frac{\partial f}{\partial \tau}+\frac{1}{2}|\nabla f|^2+ \frac{1}{2} |\dot{\gamma}|^2-\frac{1}{2}|\dot{\gamma}-\nabla f|^2 \Big)\\
&=\frac{1}{2}\sqrt{\tau} \Big(\Sh+|\dot{\gamma}|^2 -|\dot{\gamma}-\nabla f|^2 \Big).
\end{align*}
Hence integrating from $0$ to $\bar{\tau}$
\begin{equation}
f(\gamma(\bar{\tau}),\bar{\tau}))= \frac{1}{2 \sqrt{\bar{\tau}}} \mathcal{L}(\gamma)-\frac{1}{2 \sqrt{\bar{\tau}}} \int_0^{\bar{\tau}}{\sqrt{\tau}|\dot{\gamma}(\tau)-\nabla f(\gamma(\tau),\tau)|^2_{g(\tau)} d\tau}.
\label{potentiallength}
\end{equation}
Now choosing $\gamma$ such that $\dot{\gamma}=\nabla f(\tau)$, we get $f(q,\bar{\tau})\ge l_{p,0}(q,\bar{\tau})$. For the opposite inequality, it is sufficient to apply (\ref{potentiallength}) to a minimal $\mathcal{L}$-geodesic since the integrand is non-negative.
\end{proof}
\end{theorem}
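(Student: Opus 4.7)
The plan is to establish, for any admissible curve $\gamma$ joining $p$ at $\tau=0$ to $q$ at $\tau=\bar\tau$ (in the backward parabolic time $\tau=T-t$), the pointwise identity
\[
f(q,\bar\tau) \;=\; \frac{1}{2\sqrt{\bar\tau}}\,\mathcal{L}(\gamma) \;-\; \frac{1}{2\sqrt{\bar\tau}}\int_0^{\bar\tau}\!\sqrt{\tau}\,|\dot\gamma(\tau)-\nabla f|^2_{g(\tau)}\,d\tau,
\]
and then to read off both inequalities $f\le l_{p,T}$ and $l_{p,T}\le f$ directly from it.

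First I would normalize by adding to $f$ the constant from Proposition \ref{solitonstrange}, which yields $\Sh+|\nabla f|^2=f/\tau$; this is precisely the ``normalization constant'' appearing in the statement. Using the canonical form $g(t)=(T-t)\,\psi_t^*g(0)$, $\phi(t)=\psi_t^*\phi(0)$, $f(t)=\psi_t^*f(0)$ with $\psi_t$ generated by $\nabla f/(T-t)$, a short chain-rule computation then produces $\partial f/\partial\tau=-|\nabla f|^2$, reflecting the fact that $f$ is pulled back along its own gradient flow up to the shrinking factor.

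Second, I would differentiate $\sqrt\tau\,f(\gamma(\tau),\tau)$ along an arbitrary smooth curve $\gamma$, apply the completion of squares $\langle\nabla f,\dot\gamma\rangle=\tfrac12|\nabla f|^2+\tfrac12|\dot\gamma|^2-\tfrac12|\dot\gamma-\nabla f|^2$, and substitute the two soliton identities just derived; the derivative collapses to $\tfrac12\sqrt\tau\bigl(\Sh+|\dot\gamma|^2-|\dot\gamma-\nabla f|^2\bigr)$. Integrating from $0$ to $\bar\tau$ and recognizing the leading piece as $\tfrac12\mathcal{L}(\gamma)$ yields the key identity. Non-negativity of the correction integral and taking the infimum over $\gamma$ immediately gives $f(q,\bar\tau)\le l_{p,T}(q,\bar\tau)$; choosing $\gamma$ to be the integral curve of $\nabla f$ reaching $q$ at $\tau=\bar\tau$ makes the correction vanish identically and produces the reverse inequality. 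Since the right-hand side $f(q,\bar\tau)$ has no $p$-dependence, independence of $l_{p,T}$ from the base point $p$ is automatic.

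The main obstacle is the reverse inequality: in the canonical form the endpoint $(p,T)$ is a degenerate limit of the metric, so the $\nabla f$-trajectory used as a competitor has to be admissible in the infimum defining $l_{p,T}$. In view of Theorem \ref{redlensingtime}, $l_{p,T}$ is a $C^{0,1}_{\mathrm{loc}}$-subsequential limit of the classical $l_{p,t_i}$ with $t_i\nearrow T$, so I would argue via self-similarity of the soliton that the $\mathcal{L}$-minimizing geodesics from $p$ at time $t_i$ to $q$ at time $\bar t$ converge precisely to this $\nabla f$-trajectory; this commutation of infimum and limit is what closes the argument and reduces the singular-time statement to the explicit identity above.
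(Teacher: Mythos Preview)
Your core approach is identical to the paper's: normalize so that $\Sh+|\nabla f|^2=f/\tau$, derive $\partial f/\partial\tau=-|\nabla f|^2$ from the self-similar structure, differentiate $\sqrt{\tau}f(\gamma(\tau),\tau)$ along an arbitrary curve, complete the square to obtain
\[
\frac{d}{d\tau}\bigl(\sqrt{\tau}f\bigr)=\tfrac12\sqrt{\tau}\bigl(\Sh+|\dot\gamma|^2-|\dot\gamma-\nabla f|^2\bigr),
\]
integrate, and read off the two inequalities exactly as you describe (non-negativity of the correction term for one direction, the $\nabla f$-integral curve for the other). The paper does precisely this, following \cite{cho}.

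The only difference is your final paragraph. You correctly observe that $l_{p,T}$ is \emph{defined} in Theorem \ref{redlensingtime} as a subsequential $C^{0,1}_{\mathrm{loc}}$-limit of $l_{p,t_i}$, not as an infimum over curves emanating from $(p,T)$, so that using the $\nabla f$-trajectory as a competitor requires justification. The paper does not address this: it simply writes $l_{p,0}$ (in backward time) and treats it as if it were a direct infimum over admissible curves, glossing over both the question of whether the integral curve of $\nabla f$ actually reaches the prescribed point $p$ as $\tau\to 0$ and the passage from the $l_{p,t_i}$ to the limit. Your concern is therefore more scrupulous than the paper's own argument. That said, your proposed remedy---showing that the $\mathcal{L}$-minimizers from $(p,t_i)$ converge to the $\nabla f$-trajectory via self-similarity---is heavier than what is really needed and is itself only sketched; a cleaner route would be to run the same integration identity on $[\tau_i,\bar\tau]$ with $\tau_i=T-t_i$ and pass to the limit, or simply to note that the key identity makes no essential use of the endpoint $\gamma(0)$ once the boundary term $\sqrt{\tau}f(\gamma(\tau),\tau)\to 0$ is secured.
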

Combining the two theorems above we get the following result.
\begin{corollary}
Any complete normalized gradient shrinking soliton solution in canonical form has constant reduced volume based at singular time.
\label{shrinkerconstant}
\end{corollary}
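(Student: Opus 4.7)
The strategy is to show that on a normalized canonical-form shrinking soliton, $\square^{*} v_{p,T} \equiv 0$ pointwise, after which constancy of $V_{p,T}$ follows from the standard identity $\tfrac{d}{d\bar t} V_{p,T}(\bar t) = -\int_M \square^{*} v_{p,T}\,dvol_{g(\bar t)}$, itself arising from the volume-form evolution $\partial_{\bar t}\,dvol = -\Sh\,dvol$ together with integration by parts to discard the $\int_M \Delta v_{p,T}\,dvol_{g(\bar t)}$ term. First I would invoke the preceding theorem to identify $l_{p,T}(q,\bar t)$ with the normalized soliton potential $f(q,\bar t)$ for every choice of base point $p$; by the first line of Theorem \ref{redlensingtime}, the claim $\square^{*} v_{p,T} \equiv 0$ then reduces to the pointwise identity
\begin{equation*}
-\partial_{\bar t} f - \Delta f + |\nabla f|^{2} - \Sh + \tfrac{n}{2(T-\bar t)} = 0.
\end{equation*}

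To verify this identity I would combine two ingredients already available in the excerpt. Taking the trace of the first soliton equation in (\ref{solitoncanonical}) (in canonical shrinking form) gives $\Sh + \Delta f = \tfrac{n}{2(T-\bar t)}$, and the canonical-form computation $\partial_\tau f = -|\nabla f|^{2}$ that appeared in the proof of the preceding theorem (with $\tau = T - \bar t$) gives $\partial_{\bar t} f = |\nabla f|^{2}$. Substituting these two equalities directly collapses the left-hand side above to zero, so $\square^{*} v_{p,T} \equiv 0$. Interestingly, the normalization of Proposition \ref{solitonstrange}, while available, is not directly used in this particular computation — although it does enter indirectly through the preceding theorem to guarantee the equality $l_{p,T} = f$ in the first place.

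The only genuine technical point is justifying the integration by parts $\int_M \Delta v_{p,T}\,dvol_{g(\bar t)} = 0$ on the potentially non-compact manifold $M$. For this I would appeal to Theorem \ref{rigidity}, which gives $|\nabla\sqrt{f}|\leq \tfrac{1}{2}$ and hence at-most-quadratic growth of $f = l_{p,T}$; consequently $v_{p,T}$ and $|\nabla v_{p,T}|$ enjoy Gaussian decay at spatial infinity, and a standard cut-off argument drives the boundary terms to zero in the limit. This yields $\tfrac{d}{d\bar t} V_{p,T}(\bar t) \equiv 0$, finishing the proof and incidentally confirming that the preparatory rigidity results of Section \ref{zhangtype} play their role precisely in controlling the geometry at infinity on the limiting soliton.
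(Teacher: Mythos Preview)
Your proof is correct and follows the same route the paper intends: use the preceding theorem to identify $l_{p,T}=f$, then verify from the traced soliton equation $\Sh+\Delta f=\tfrac{n}{2(T-\bar t)}$ together with $\partial_{\bar t} f=|\nabla f|^2$ that $\square^{*}v_{p,T}\equiv 0$, whence $V_{p,T}$ is constant. One small slip in your justification of the integration by parts: at-most-quadratic growth of $f$ only bounds $e^{-f}$ from \emph{above} and does not by itself give decay; what you actually need is the complementary quadratic \emph{lower} bound $f(q)\ge \tfrac14\big(d(p,q)-c\big)^2$, standard for complete shrinkers and obtainable from the same second-variation ingredients used in Theorem~\ref{rigidity}, which then genuinely yields the Gaussian decay you claim.
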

We now want to enquire for a converse of this statement. Suppose the coupling function is non increasing, and suppose that the reduced volume based at the singular time $T$ is constant. Then we have
\begin{equation}
\begin{cases}
\Ric-\alpha \nabla \phi \otimes \nabla \phi+\Hess(l_{p,T})-\frac{g}{2(T-t)}=0,\\
\tau_g \phi-\langle \nabla \phi, \nabla l_{p,T} \rangle=0,\\
\dot{\alpha}|\nabla \phi|^2=0.
\end{cases}
\end{equation}
The first two equations are the equations characterizing a normalized gradient shrinking soliton solution in canonical form with potential function $l_{p,T}$. 
\bigskip

Suppose now that $\alpha$ is not constant, then there exists $\bar{t} \in (0,T)$ such that $\dot{\alpha}(\bar{t})<0$. Therefore the third equation yields $\nabla \phi=0$, i.e. $\phi(\bar{t}) \equiv y_0$ where $y_0 \in N$ (here we are assuming connectedness of $M$). Writing the harmonic Ricci flow at $\bar{t}$ we get
\begin{equation}
\begin{cases}
\partial_t g \big|_{t=\bar{t}}= -2\Ric_{g(\bar{t})}=2\Hess(l_{p,T})-\frac{g}{(T-t)},\\
\partial_t \phi \big|_{t=\bar{t}}=\tau_g(y_0)=0.
\end{cases}
\end{equation}
Thus from the time $\bar{t}$ on, the solution coincides with a gradient shrinking Ricci soliton solution and a constant map. Since the flow is smooth on the whole interval $[0,\bar{t}]$, we also get that $\phi_0$ is isotopic to a constant map. This strongly restricts the class of initial data $(M,g_0,N,\phi_0)$ that gives rise to a harmonic gradient shrinking Ricci soliton solution \textit{in canonical form}. For example, if we have $M=N=S^n$, with $\phi_0=id$, and non-constant coupling function, then for \textit{any metric} on the domain and target, the solution of the harmonic Ricci flow will not be a harmonic gradient shrinking Ricci soliton solution in canonical form! Compare this with the discussion in Section \ref{sss}.

Reassuringly, if we blow up a Type I harmonic Ricci flow, we get a harmonic gradient shrinking Ricci soliton solution in canonical form, but the limit of the coupling functions  $\alpha(T-\frac{t}{\lambda})$ will be a constant because $\lambda$ tends to infinity, and we cannot apply the argument above.
\begin{remark}
We point out that if we blow up a \textit{complete} harmonic Ricci flow at a regular time, then we get the Gaussian soliton in canonical form and a constant map from $\R^n \rightarrow N$.
Indeed suppose we have constant reduced volume based at a regular time $t_0 < T$, then we can assume without loss of generality that $\alpha$ is constant: otherwise we get as above that $\phi \equiv y_0$ is a constant map, and $(g(t),l_{p,t_0})$ is a gradient shrinking Ricci soliton solution, which verifies the Ricci soliton equations with respect to a regular time, thus from Ricci flow theory it must be the Gaussian soliton in canonical form. 

In the case $\alpha$ is constant, we get the more general definition of a gradient shrinking harmonic Ricci soliton solution in canonical form, i.e. $(g(t),\phi(t),l_{p,t_0})$ solves the system:
\begin{equation}
\begin{cases}
\Ric-\alpha \nabla \phi \otimes \nabla \phi+\Hess(l_{p,t_0})-\frac{g}{2(t_0-t)}=0,\\
\tau_g \phi-\langle \nabla \phi, \nabla l_{p,t_0} \rangle=0.
\end{cases}
\end{equation}
We claim that this can only be the case if $(g(t),y_0,l_{p,t_0})$ is the Gaussian soliton in canonical form. Recall that by the long time existence result in \cite{mul}, we have bounded curvature up to $t_0$, so we can use the maximum principle. 
By Theorem \ref{rigidity}, we have $\Sc \ge \Sh \ge 0$. Moreover, by the same theorem we know that there exists a family $\psi_t$ of diffeomorphisms induced by $\nabla l_{p,t_0}(t)$, so $\Sh(t,x)=(t_0-t)^{-1}\Sh(0,\psi_t(x))$. Taking the limit $t \rightarrow t_0$, if there exists an $x_0$ such that $\Sh(x_0,0)>0$, then $\Sh(t_0,\psi_{t_0}(x_0))=\infty$, which is contradictory to the assumption that $t_0$ is a regular time. Thus $\Sh \equiv 0$ and we conclude using the rigidity part in Theorem \ref{rigidity}. 
\end{remark}
\section{Pseudolocality Theorem for Harmonic Ricci Flow}
\label{sp}
\label{pseudolocality}
In this section we want to prove a pseudolocality theorem for the harmonic Ricci flow in the same spirit as \cite{guo} where this is done for List's flow (i.e. harmonic Ricci flow with target $N=\R$). The proof is similar to the analoguous Ricci flow case (see \cite{per},\cite{kle}), and we want to stress that we had to built all the machinery in the previous sections to make it rigorous. For this reason we will emphasize the parts of the proof which needed a correction and refer the reader back to \cite{guo} for the remaining ones.

First, recall the following \textit{Perelman-Li-Yau-Harnack type} inequality due to B\u{a}ile\c{s}teanu and Tran.
\begin{theorem}[Theorem 1.1 in \cite{bai}]
Let $(M,g(t),\phi(t))$ be a harmonic Ricci flow on $[0,T]$, with non-increasing coupling function $\alpha(t)$. Suppose $u \coloneqq (4 \pi (T−t))^{-n/2} e^{-f}$ is a fundamental solution of the adjoint heat equation $\square^* u=0$, i.e. it tends to $\delta_p$ as $t \rightarrow 0$, where $\delta_p$ is the Dirac delta at a certain point $p \in M$. Then defining $v \coloneqq ((T-t)(2 \Delta f-|\nabla f|^2+\Sh)+f-n)u$, we have $v \le 0$ in $(0,T]$.
\label{plyhinequality}
\end{theorem}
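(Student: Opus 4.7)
The approach is classical: adapt Perelman's original strategy for the analogous Ricci flow inequality (as presented for instance in \cite{kle}) to the harmonic Ricci flow setting, leveraging the heavy computation already done in Theorem \ref{conjheatlocvol}.

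First, observe that Theorem \ref{conjheatlocvol} provides the crucial identity: for any smooth solution $u = (4\pi(T-t))^{-n/2} e^{-f}$ of $\square^* u = 0$, the quantity $v = \big((T-t)(2\Delta f - |\nabla f|^2 + \Sh) + f - n\big)u$ satisfies
$$\square^* v = -2(T-t)\left[\left|S_{ij} + \nabla_i \nabla_j f - \tfrac{g_{ij}}{2(T-t)}\right|^2 + \alpha|\tau_g \phi - \nabla_i \phi \nabla_i f|^2 - \dot{\alpha}|\nabla \phi|^2\right] u.$$
Since $\alpha$ is non-increasing, $-\dot{\alpha} \ge 0$, and the remaining two terms inside the bracket are squares, the bracket is pointwise non-negative. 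Together with $u > 0$ and $T - t \ge 0$, this yields $\square^* v \le 0$. Hence $v$ is a subsolution of the conjugate heat equation.

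Second, one must verify that $v \le 0$ in the appropriate limiting sense near the concentration time. Using the short-time asymptotics of the fundamental solution of the conjugate heat equation (Varadhan-type estimates adapted to harmonic Ricci flow as developed in \cite{mul1}), one expects $f(x,t) \sim d_{g(t)}(x,p)^2/(4(T-t)) + O(1)$, with companion bounds on $\nabla f$ and $\Delta f$, near the concentration point. A direct substitution then shows that the leading singular contributions to $(T-t)(2\Delta f - |\nabla f|^2 + \Sh) + f - n$ cancel, leaving a quantity whose product with $u$ tends to zero (or is at most non-positive) in the appropriate limit, modelled on the Euclidean Gaussian case where $v \equiv 0$.

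Finally, apply the maximum principle. On compact $M$ this is immediate. On a complete non-compact $M$, one combines the subsolution inequality with a cutoff argument exploiting the Gaussian spatial decay of $u$: since $f$ grows quadratically at infinity while $v$ grows only polynomially relative to $f$, all cutoff-derived boundary terms vanish in the limit, giving $v \le 0$ on $(0,T]$. The main obstacle is Step 2, where one must control the heat kernel asymptotics rigorously in the presence of the map component $\phi$; this is precisely the technical heart of the argument in \cite{bai}, requiring delicate modifications of the standard Ricci flow heat kernel estimates to account for the contribution of $\alpha \nabla \phi \otimes \nabla \phi$ to the evolution and for the coupling with the tension field $\tau_g \phi$.
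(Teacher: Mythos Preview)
The paper does not give a proof of this theorem; it is recalled from \cite{bai} and used as a black-box input to the pseudolocality argument (Theorem \ref{pseudo}). There is therefore no in-paper proof to compare your proposal against.

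That said, your three-step outline is exactly the standard argument --- Perelman's original strategy, transported to the coupled flow --- and is essentially what \cite{bai} carries out. Step 1 follows cleanly from Theorem \ref{conjheatlocvol} together with $\dot\alpha \le 0$, as you note. Step 2, the heat-kernel asymptotics forcing $v \to 0$ at the pole, is indeed the substantive part; you correctly flag it as the technical heart of \cite{bai}, and your description of the cancellation mechanism (quadratic growth of $f$ matching $d^2/4(T-t)$, so that the leading terms in $(T-t)(2\Delta f - |\nabla f|^2 + \Sh) + f - n$ cancel) is accurate. Step 3 is routine on compact $M$. One small correction: the fundamental solution concentrates as $t \nearrow T$ (the prefactor $(4\pi(T-t))^{-n/2}$ blows up there), not as $t \to 0$ as the paper's statement literally reads --- this appears to be a typo in the paper, and your discussion of the ``concentration time'' is implicitly consistent with the correct direction.
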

\noindent Equipped with this result, we can now give a proof of the Pseudolocality Theorem \ref{pseudo}.
\begin{proof}[Proof of Theorem \ref{pseudo}]
First of all, we can assume without loss of generality that $\beta < 1/100n$ and $r_0=1$. Arguing by contradiction, suppose that there exist $\e_k,\delta_k \longrightarrow 0$ such that for each $k$, there exist a complete pointed harmonic Ricci flow $(M_k,g_k,\phi_k,p_k)$, with fixed target manifold $N$, such that $\Sh_{g_k}(0) \ge -1$ on a ball $B_{g_k(0)}(p_k,1)$ and for any $\Omega \subset B_{g_k(0)}(p_k,1)$ we have $\Area_{g_k(0)}(\partial \Omega)^n \ge (1-\delta_k)c_nVol_{g_k(0)}(\Omega)^{n-1}$, and points $(x_k,t_k)$ such that $0<t_k \le \e_k^2$, $d_{g_k(t_k)}(x_k,p_k)\le \e_k$, but
\begin{equation*}
|\Rm|(x_k,t_k)\ge \beta t_k^{-1} + \e_k^{-2}.
\end{equation*}
Moreover, we can choose $\e_k$ small enough such that
\begin{equation}
|\Rm|(x,t)\ge \beta t_k^{-1} + 2\e_k^{-2},
\label{localbound}
\end{equation}
whenever $0<t\le \e_k^2$ and $d_{g_k(t)}(x,p_k)\le \e_k$.
We can use precisely the same point selection argument as in \cite{kle} to get the following.
\begin{lemma}
Given a sequence $A_k \longrightarrow \infty$, for any $k$ there exists a space-time point $(\overline{x}_k,\overline{t}_k) \in M_k \times (0, \e_k]$ with $d_{g_k(\overline{t}_k)}(\overline{x}_k,p_k)\le (1+2 A_k)\e_k$, such that
\begin{equation*}
|\Rm|_{g_k(t)}(x,t)\le 4Q_k \coloneqq 4 |\Rm|(\overline{x}_k,\overline{t}_k),
\end{equation*}
for any $(x,t)$ in the backward parabolic region 
\begin{equation*}
\Omega_k \coloneqq \bigg\{(x,t) \bigg| d_{g(\overline{t}_k)}(x,\overline{x}_k) \le \frac{1}{10}A_k Q_k^{-1/2}, \overline{t}_k-\frac{1}{2}\beta Q_k^{-1} \le t \le \overline{t}_k \bigg\} .
\end{equation*}
\label{pickpoint}
\end{lemma}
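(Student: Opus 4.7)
The plan is to construct $(\overline{x}_k,\overline{t}_k)$ by an iterative point-picking procedure starting at $(x_k,t_k)$, and then to show that the iteration must terminate after finitely many steps by contrasting the curvature blow-up along the selected sequence with the smoothness of the flow on a fixed compact space-time region.

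\smallskip

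Concretely, I would set $(\overline{x}_k^{(0)},\overline{t}_k^{(0)})=(x_k,t_k)$, $Q_k^{(0)}=|\Rm|(x_k,t_k)$, and define the corresponding parabolic region
\[
\Omega_k^{(j)}\coloneqq\Big\{(x,t)\,\Big|\,d_{g_k(\overline{t}_k^{(j)})}(x,\overline{x}_k^{(j)})\le\tfrac{1}{10}A_k(Q_k^{(j)})^{-1/2},\ \overline{t}_k^{(j)}-\tfrac{1}{2}\beta(Q_k^{(j)})^{-1}\le t\le\overline{t}_k^{(j)}\Big\}.
\]
At step $j$, if $|\Rm|\le 4Q_k^{(j)}$ on $\Omega_k^{(j)}$, I stop and declare $(\overline{x}_k,\overline{t}_k)\coloneqq(\overline{x}_k^{(j)},\overline{t}_k^{(j)})$. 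Otherwise I pick any bad point in $\Omega_k^{(j)}$, call it $(\overline{x}_k^{(j+1)},\overline{t}_k^{(j+1)})$, so that $Q_k^{(j+1)}\coloneqq|\Rm|(\overline{x}_k^{(j+1)},\overline{t}_k^{(j+1)})>4Q_k^{(j)}$, and iterate.

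\smallskip

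The inductive bound $Q_k^{(j)}\ge 4^{j}Q_k^{(0)}$ immediately controls both the time and the spatial excursions. Since by assumption $Q_k^{(0)}\ge\beta t_k^{-1}+\varepsilon_k^{-2}$, the telescoping backward-time estimate gives
\[
\overline{t}_k^{(j)}\ge t_k-\tfrac{1}{2}\beta\sum_{i=0}^{j-1}(Q_k^{(i)})^{-1}\ge t_k-\tfrac{2}{3}\beta(Q_k^{(0)})^{-1}\ge \tfrac{t_k}{3}>0,
\]
so all $\overline{t}_k^{(j)}\in(t_k/3,t_k]\subset(0,\varepsilon_k^2]$. For the space variable, within each $\Omega_k^{(j)}$ the curvature bound $|\Rm|\le 4Q_k^{(j)}$ and time-length $\tfrac{1}{2}\beta(Q_k^{(j)})^{-1}$ yield, via Lemma~\ref{lengthdist}, a scale-invariant metric distortion factor $e^{O(\beta)}$ which is close to $1$ for $\beta<1/(100n)$. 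Applying the triangle inequality at each step and summing the resulting geometric series,
\[
d_{g_k(\overline{t}_k^{(j)})}(\overline{x}_k^{(j)},p_k)\le d_{g_k(t_k)}(x_k,p_k)+C\sum_{i=0}^{j-1}A_k(Q_k^{(i)})^{-1/2}\le\varepsilon_k+CA_k(Q_k^{(0)})^{-1/2}\le(1+2A_k)\varepsilon_k,
\]
using $(Q_k^{(0)})^{-1/2}\le\varepsilon_k$ and absorbing $C$ for $k$ large. In particular the entire sequence stays inside a fixed compact space-time set $K_k\coloneqq\{(x,t):d_{g_k(t_k)}(x,p_k)\le(1+2A_k)\varepsilon_k,\ t\in[t_k/3,t_k]\}$.

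\smallskip

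Termination now follows from a soft compactness argument. Since $(M_k,g_k(t),\phi_k(t))$ is a smooth harmonic Ricci flow on $[0,(\varepsilon r_0)^2]$, its curvature is uniformly bounded on $K_k$ (for this fixed $k$). Hence $Q_k^{(j)}\le\sup_{K_k}|\Rm|<\infty$ for all $j$, contradicting $Q_k^{(j)}\ge 4^{j}Q_k^{(0)}\to\infty$. So the procedure must stop in finitely many steps, and the terminating point $(\overline{x}_k,\overline{t}_k)$ has all the claimed properties. The main technical obstacle is the spatial bound: since the displacements at different steps are measured in different time-slices, one must combine the in-region curvature bound $4Q_k^{(j)}$ with the distance-distortion estimates from Section~\ref{lengthdistortion} to see that the per-step distortion factor is uniform and the geometric-series trick really produces an additive $O(A_k\varepsilon_k)$ excess rather than a compounding one.
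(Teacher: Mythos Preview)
Your iteration is set up on the ``final'' parabolic regions $\Omega_k^{(j)}$ themselves, and this creates a circularity in the spatial control. You write that ``within each $\Omega_k^{(j)}$ the curvature bound $|\Rm|\le 4Q_k^{(j)}$'' yields a uniform distortion factor via Lemma~\ref{lengthdist}. But at a non-terminal step $j$ this bound \emph{fails} on $\Omega_k^{(j)}$ --- that is precisely why you pass to step $j+1$ --- so it is not available for comparing $d_{g_k(\overline t_k^{(j)})}$ with $d_{g_k(\overline t_k^{(j+1)})}$. Even if it held on $\Omega_k^{(j)}$, the minimising geodesic from $\overline x_k^{(j+1)}$ to $p_k$ has no reason to stay in $\Omega_k^{(j)}$, so Lemma~\ref{lengthdist} cannot be applied to that pair; your telescoping inequality for $d_{g_k(\overline t_k^{(j)})}(\overline x_k^{(j)},p_k)$ is therefore unjustified. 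Since your termination argument relies on the selected points lying in the compact set $K_k$, which in turn rests on this spatial bound, termination is also not established.

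The argument the paper invokes (Perelman I.10.1, Claims~1--3 in \cite{kle}) avoids this by running the iteration on a different region: one looks for points $(x,t)$ in the set $\{|\Rm|\ge\beta/t\}$ with $0<t\le\overline t_k^{(j)}$ and
\[
d_{g_k(t)}(x,p_k)\le d_{g_k(\overline t_k^{(j)})}(\overline x_k^{(j)},p_k)+A_k(Q_k^{(j)})^{-1/2}.
\]
Because the spatial constraint is expressed through the distance to $p_k$ at the \emph{new} time $t$, the bound $d_{g_k(\overline t_k^{(j)})}(\overline x_k^{(j)},p_k)\le(1+2A_k)\e_k$ follows by pure telescoping, with no metric comparison needed. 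Only \emph{after} the iteration terminates and one has $|\Rm|\le 4Q_k$ on this larger set does one use the distance distortion estimate (Lemma~\ref{lengthdist1}, which needs curvature control only near the endpoints) to show that the region $\Omega_k$ of the lemma is contained in it. Your proposal collapses these two logically separate steps into one, and the resulting dependency is circular.
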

\noindent For each $k$, let $u_k \coloneqq (4 \pi (T−t))^{-n/2} e^{-f_k}$ be the fundamental solution of the adjoint heat equation $\square_k^* u_k=0$, which tends to $\delta_{\overline{x}_k}$ as $t \rightarrow 0$. Define as above $v_k \coloneqq ((T-t)(2 \Delta f_k-|\nabla f_k|^2+\Sh_k)+f_k-n)u_k$. Then we know by the Theorem \ref{plyhinequality} that $v_k \le 0$. The proof of the next lemma requires a proper compactness theorem (the proof given in \cite{guo} for List's flow can be corrected using our Compactness Theorem \ref{compact2}).
\begin{lemma}
There exist a constant $b>0$ independent of $k$ and times $\tilde{t}_k \in (t_k-\frac{1}{2}\beta Q_k^{-1}, \overline{t}_k)$ such that 
\begin{equation*}
\int_{B_k}{v_k dV_{g_k(\tilde{t_k})}} \le -b <0,
\end{equation*}
where $B_k=B_{g_k(\tilde{t_k})}\big(\overline{x}_k,\sqrt{\overline{t}_k-\tilde{t}_k}\big)$.
\begin{proof}
This proof is by contradiction. Suppose that for any $\tilde{t}_k \in (t_k-\frac{1}{2}\beta Q_k^{-1}, \overline{t}_k)$ there exists a subsequence (not relabelled) such that 
\begin{equation}
\liminf_{k \rightarrow \infty}{\int_{B_k}{v_k dV_{g_k(\tilde{t_k})}}} \ge 0.
\label{liminf}
\end{equation}
Consider the rescaling $\hat{g}_k(t)=Q_k g_k(Q_k^{-1}t+\overline{t}_k)$, $\hat{\phi}_k(t)= \phi_k(Q_k^{-1}t+\overline{t}_k)$ for $t \in [-Q_k \overline{t}_k,0]$. Note that under this rescaling $\alpha_k(t)$ becomes $\hat{\alpha}_k(t)=\alpha_k(Q_k^{-1}t+\overline{t}_k)$, while the set $\Omega_k$ becomes the parabolic region $B_{\hat{g}_k(0)}\Big(\overline{x}_k,\frac{1}{10}A_k\Big)\times [-\frac{1}{2} \beta,0]$, where we have curvature bounded by $4$. We split the proof in two cases: 

\noindent \textbf{Case $1$}: Suppose first that the injectivity radii of the $\hat{g}_k$ are uniformly bounded away from zero. We can use Theorem \ref{compact2} to extract a subsequence of $(M_k,\hat{g}_k(t),\hat{\phi}_k(t),\overline{x}_k)$ converging to a harmonic Ricci flow $(M_{\infty},g_{\infty}(t),\phi_{\infty}(t),x_{\infty})$ on $[-\frac{1}{2} \beta,0]$. This limit flow is complete, has $|\Rm| \le 4$, and $|\Rm|(x_{\infty},0)=1$. Notice that the limit coupling function $\alpha_{\infty}$ is constant, hence we can improve the pointwise convergence given by the Theorem \ref{compact2} to a $C^{\infty}([-\frac{1}{2} \beta,0])$-convergence because of the peculiar form of $\hat{\alpha}_k(t)$.

The fundamental solutions $\hat{u}_k$ based at $(\overline{x}_k,0)$ of the rescaled flows will converge smoothly to $u_{\infty}$, the fundamental solution to the conjugate heat equation on the limit flow, based at $(x_{\infty},0)$. So the respective $\hat{v}_k$ converge to the respective $v_{\infty} \le 0$ since it is a pointwise limit of non-negative functions. On the other hand, from the inequality (\ref{liminf}), it follows that for any fixed $t_0 \in [-\frac{1}{2} \beta,0]$ we have 
\begin{equation*}
\int_{B_{g_{\infty}(t_0)}\big(x_{\infty},\sqrt{-t_0}\big)}{v_{\infty}(\cdot,t_0) dV_{g_{\infty}(t_0)}} \ge 0.
\end{equation*}
Therefore it must be $v_{\infty}(\cdot,t_0)=0$ on $B_{g_{\infty}(t_0)}\big(x_{\infty},\sqrt{-t_0}\big)$. A strong maximum principle argument yields $v_{\infty} \equiv 0$ on $M_{\infty} \times (t_0,0]$. 
In particular, we obtain that $\square^* w_{\infty} =0$, where $w_{\infty}$ is the function defined by $ w_{\infty} \coloneqq \big( (T-t)(2\Delta f_{\infty}-|\nabla f_{\infty})|^2+\Sh) + f_{\infty}-n \big)v_{\infty}$, so Theorem \ref{conjheatlocvol} allows us conclude that $(M_{\infty},g_{\infty}(t),\phi_{\infty}(t),f_{\infty}(t))$ is a gradient harmonic Ricci shrinking soliton solution. Since it has bounded curvature and is complete, the results in Section \ref{zhangtype} ensure it is the Gaussian soliton in canonical form; this is contradictory to $|\Rm|(x_{\infty},0)=1$.
\bigskip

\noindent \textbf{Case $2$}: In the case the injectivity radii of the rescaled metrics $\hat{g}_k$ at the point $\overline{x}_k$ tend to zero, denote them by $r_k=inj(\overline{x}_k,\hat{g}_k(0))\rightarrow 0$. Rescale further by $\tilde{g}_k(t)=r_k^{-2} \hat{g}_k(r_k^2 t)$, $\tilde{\phi}_k(t)= \hat{\phi}_k(r_k^2 t)$, $\tilde{\alpha}_k(t)=\hat{\alpha}_k(r_k^2 t)$, defined for $t \in [-\frac{1}{2} \beta r_k^{-2},0]$. Clearly, the injectivity radius of $\tilde{g}_k(0)$ at $\overline{x}_k$ is $1$ for every $k$. The region $\Omega_k$ becomes $B_{\tilde{g}_k(0)}\big(\overline{x}_k,\frac{1}{10}A_k r_k^{-1}\big)\times \big[-\frac{1}{2} \beta r_k^{-2},0\big]$. Remark that on these regions, which are larger and larger (both in space and time) with $k$ increasing, we have curvature bounded by $4 r_k^2 < \infty$ uniformly in $k$, so we can use Theorem \ref{compact2} to get a subsequence of $(M_k,\tilde{g}_k(t),\tilde{\phi}_k(t),\overline{x}_k)$ converging in the pointed Cheeger-Gromov sense to a complete pointed harmonic Ricci flow $(M_{\infty},g_{\infty}(t),\phi_{\infty}(t),x_{\infty})$ for $(-\infty,0]$, with constant coupling function (again with smooth convergence). Moreover, the curvature bound gives that $g_{\infty}$ is a flat metric and hence $\phi_{\infty}$ is a constant map since the limit flow is ancient and complete.

Passing to the limit (\ref{liminf}), we get as above that $(M_{\infty},g_{\infty}(t),\phi_{\infty}(t),f_{\infty}(t))$ is a gradient harmonic Ricci shrinking soliton solution. Again, the completeness and the boundedness of the curvature imply by the discussion in Section \ref{zhangtype} that the limit flow has to be the Gaussian soliton in canonical form. In particular, the injective radius $inj_{g_{\infty}}(x_{\infty},0)=+\infty$, which is contradictory.
\end{proof}
\label{gaplemma}
\end{lemma}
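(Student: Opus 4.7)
The plan is a proof by contradiction combined with a blow-up analysis. Theorem \ref{plyhinequality} already provides $v_k \le 0$ globally, so failure of the lemma means that, along some subsequence, one can find times $\tilde t_k \in (\bar t_k - \tfrac{1}{2}\beta Q_k^{-1}, \bar t_k)$ with $\liminf_k \int_{B_k} v_k \, dV_{g_k(\tilde t_k)} \ge 0$ (and hence, in view of the sign, tending to $0$). The strategy is to parabolically rescale at the maximum curvature point $(\bar x_k, \bar t_k)$, extract a smooth limiting harmonic Ricci flow together with a smooth limit of the fundamental solutions $u_k$ (and therefore of $v_k$), transport the vanishing integral to the limit to force $v_\infty \equiv 0$ via the strong maximum principle, and conclude via the equality case of Theorem \ref{conjheatlocvol} together with the rigidity Theorem \ref{rigidity}.

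To effect the blow-up I would set $\hat g_k(t) := Q_k g_k(\bar t_k + Q_k^{-1} t)$, $\hat \phi_k(t) := \phi_k(\bar t_k + Q_k^{-1} t)$, $\hat \alpha_k(t) := \alpha_k(\bar t_k + Q_k^{-1} t)$ on $[-\tfrac{1}{2}\beta, 0]$. Lemma \ref{pickpoint} supplies $|\Rm|_{\hat g_k(t)} \le 4$ on $B_{\hat g_k(0)}(\bar x_k, \tfrac{1}{10} A_k)$ for such $t$ together with $|\Rm|_{\hat g_k(0)}(\bar x_k) = 1$, and, since the time window shrinks by the factor $Q_k^{-1} \to 0$, the rescaled couplings $\hat \alpha_k$ tend (smoothly) to a constant limit $\alpha_\infty$. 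One then splits into two cases depending on whether $\inj_{\hat g_k(0)}(\bar x_k)$ stays bounded below or degenerates to zero.

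In the non-collapsed case, Theorem \ref{compact2} delivers a complete pointed limit harmonic Ricci flow $(M_\infty, g_\infty, \phi_\infty, x_\infty)$ on $[-\tfrac{1}{2}\beta, 0]$ with constant coupling function and $|\Rm|_{g_\infty(0)}(x_\infty) = 1$. Because the base time of the conjugate heat equation corresponds to $t = 0$ in the rescaled picture while the integration time is uniformly bounded away from $0$, standard parabolic regularity for $\square^* u = 0$ yields smooth subsequential convergence $\hat u_k \to u_\infty$ and hence $\hat v_k \to v_\infty \le 0$. The contradiction hypothesis passes to the limit, giving $v_\infty \equiv 0$ on a geodesic ball at some time $t_0$, and the strong maximum principle (applied to the backward heat inequality satisfied by $v_\infty$) propagates this to all of $M_\infty \times (t_0, 0]$. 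Then $\square^* w_\infty = 0$ together with the equality case in Theorem \ref{conjheatlocvol} (using $\dot\alpha_\infty \equiv 0$) forces $(M_\infty, g_\infty, \phi_\infty, f_\infty)$ to be a complete normalized gradient shrinking harmonic Ricci soliton in canonical form, which by Theorem \ref{rigidity} must be the flat Gaussian soliton — contradicting $|\Rm|_{g_\infty(0)}(x_\infty) = 1$.

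The main obstacle is the collapsed case $r_k := \inj_{\hat g_k(0)}(\bar x_k) \to 0$. Here I would rescale once more by $\tilde g_k(t) := r_k^{-2} \hat g_k(r_k^2 t)$ (and similarly for $\tilde \phi_k, \tilde \alpha_k$) on the expanding intervals $[-\tfrac{1}{2}\beta r_k^{-2}, 0]$; now $\inj_{\tilde g_k(0)}(\bar x_k) = 1$ while $|\Rm|_{\tilde g_k} \le 4 r_k^2 \to 0$ on balls of radius $\tfrac{1}{10} A_k r_k^{-1} \to \infty$. Theorem \ref{compact2} then extracts a complete ancient limit flow on $(-\infty, 0]$ whose metric is flat, and by the preliminary dominance results of Section \ref{results} (applied to an ancient complete flow with bounded curvature) the map $\phi_\infty$ must be constant. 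Running the same $v_k$ machinery as in the non-collapsed case — the liminf hypothesis again forces $v_\infty \equiv 0$, and the equality case of Theorem \ref{conjheatlocvol} produces a gradient shrinking soliton structure — and invoking Theorem \ref{rigidity}, one identifies the limit with the Gaussian soliton on $\mathbb{R}^n$, whose base point has infinite injectivity radius, contradicting $\inj_{\tilde g_k(0)}(\bar x_k) = 1$ inherited in the limit.
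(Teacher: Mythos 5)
Your proposal is correct and follows essentially the same route as the paper: contradiction via the liminf hypothesis, parabolic rescaling at the point from Lemma \ref{pickpoint}, the same two-case split on the injectivity radius at $\overline{x}_k$, convergence of the fundamental solutions and $v_k$ to force $v_\infty\equiv 0$ by the strong maximum principle, and identification of the limit as the Gaussian soliton via Theorem \ref{conjheatlocvol} and the rigidity results of Section \ref{zhangtype}, contradicting $|\Rm|(x_\infty,0)=1$ in the non-collapsed case and the unit injectivity radius in the collapsed case.
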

\noindent From here on the proof of Theorem \ref{pseudo} reads the same as in \cite{guo}: One can first show for every $k$ a similar uniform integral bound as in Lemma \ref{gaplemma} for the time $\bar{t}_k$ slice and certain compactly supported functions, using the completeness assumption and the Harnack inequality. Then asymptotically confronting this bound with the log-Sobolev inequality, which is valid for domains close to the euclidean space as by assumption, one obtains a contradiction. We refer the reader to \cite{guo} for details.
\end{proof}
\begin{remark}
Remark that the hypothesis $(c)$ appearing in Theorem 1 of \cite{guo} is absorbed here in the assumption that $N$ is closed.
\end{remark}
\noindent We present a slightly modified version of the Pseudolocality Theorem, which is a corollary of Theorem \ref{pseudo}.
\begin{theorem}[HRF Pseudolocality Theorem: Version $2$]
There exist $\e,\delta>0$ depending as above on $\underline{\alpha},\overline{\alpha}$ and $N$ with the following
property. Let $(M,g(t),\phi(t), p)$ be a smooth complete pointed harmonic Ricci flow solution  defined for $t \in [0,(\e r_0)^2]$. Assume further the following conditions:
\begin{itemize}
\item $|\Rm|(0) \le r_0^{-2}$ on $B_{g(0)}(p,r_0)$;
\item $\Vol_{g(0)}(B_{g(0)}(p,r_0)) \le (1-\delta) \omega_n r_0^n$, where $\omega_n$ is the volume of the Euclidean unit ball.
\end{itemize}
Then $|\Rm|(x,t)<(\e r_0)^{-2}$ whenever $0\le t \le (\e r_0)^2$ and $d_{g(0)}(x,p)\le \e r_0$.
\begin{proof}
First of all, we notice that Theorem \ref{pseudo} guarantees the existence of $\e'$ and $\delta$ such that for every flow verifying the hypothesis in that theorem we have $|\Rm|(x,t)\le \beta t^{-1} + (\e r_0)^{-2}$ for $0 \le t \le (\e r_0)^2$ and $d(x,t) \le \e r_0$. Using the continuity and the initial data bound, for $\e''$ smaller than $2$ (say), we have $|\Rm|(x,t)<(\e'' r_0)^{-2}$ for $0 \le t \le t_0(\e'')$ and $d(x,t) \le r_0$. Choosing a possibly smaller $\e$ (depending only on the previous $\e'$ and $\e''$), we get $|\Rm|(x,t)<(\e r_0)^{-2}$ for $0\le t \le (\e r_0)^2$ and $d_{g(t)}(x,p)\le \e r_0$. Finally, the length distortion Lemma \ref{lengthdist} gives us the conclusion after possibly further decreasing $\e$ in dependence of the previous $\e$.
\end{proof}
\label{pseudo2}
\end{theorem}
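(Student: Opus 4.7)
The strategy is to reduce the statement to Theorem \ref{pseudo} by converting the hypotheses and then post-processing the conclusion. The hypotheses in the two versions differ in two ways: here we assume a pointwise curvature bound on $B_{g(0)}(p,r_0)$ together with an almost-Euclidean volume condition, instead of the $\Sh$ lower bound and isoperimetric inequality of Theorem \ref{pseudo}; and here the conclusion is allowed to include $t=0$ with balls measured at the initial time. The plan therefore splits into three substeps: (a) verify the hypotheses of Theorem \ref{pseudo}, (b) apply it and remove the $\beta/t$ blow-up using the initial curvature bound, and (c) switch from $d_{g(t)}$-balls to $d_{g(0)}$-balls via the distortion estimates of Section \ref{lengthdistortion}.

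For (a), the lower bound on $\Sh(0)$ follows immediately from $|\Rm|(0)\le r_0^{-2}$ together with the bound $|\nabla\phi|^2\le -\Sh_{\min}(0)/\underline{\alpha} + |\Sc|/\underline{\alpha}$ implicit in Lemma \ref{rbounds}, since $|\Sh|\le C(n,N,\overline{\alpha})r_0^{-2}$. The more delicate point is producing the near-Euclidean isoperimetric condition from the volume bound: I would argue by contradiction, rescaling so that $r_0=1$ and extracting a Cheeger--Gromov subsequential limit of the initial slices using Theorem \ref{compactnessmanifold} (the two-sided curvature bound, together with the almost-Euclidean volume ratio and Bishop-Gromov, provides the required injectivity radius lower bound). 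The limit is a smooth Riemannian ball with $|\Rm|\le 1$ and exactly Euclidean volume, hence flat; therefore the isoperimetric constants of the sequence converge to the Euclidean one, contradicting the assumption that $\delta_k\to 0$ is impossible. This yields a $\delta>0$ so that Theorem \ref{pseudo} applies on (a slightly smaller) ball.

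For (b), Theorem \ref{pseudo} delivers $|\Rm|(x,t)\le \beta t^{-1}+(\e' r_0)^{-2}$ whenever $0<t\le (\e' r_0)^2$ and $d_{g(t)}(x,p)\le \e' r_0$. To turn this into a bound uniform down to $t=0$, I would combine it with a short-time continuity argument based on the initial bound $|\Rm|(0)\le r_0^{-2}$: using the evolution inequalities (\ref{riem}), (\ref{hess}), (\ref{grad}) and the companion bounds from Theorem \ref{typerhrf}, a standard maximum principle argument on $(T-t)^{2}|\Rm|^2$-type quantities gives $|\Rm|(x,t)\le 2r_0^{-2}$ on some parabolic neighborhood $B_{g(0)}(p,r_0/2)\times[0,t_0]$ with $t_0=c(n,N,\overline{\alpha})\,r_0^{2}$. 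Shrinking $\e$ so that both $2r_0^{-2}\le(\e r_0)^{-2}$ and $\beta/t_0+(\e' r_0)^{-2}\le (\e r_0)^{-2}$, the two bounds match up along $t=t_0$ and yield a single uniform estimate $|\Rm|<(\e r_0)^{-2}$ on $[0,(\e r_0)^2]$.

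For (c), the $g(t)$-ball condition in Theorem \ref{pseudo} is converted into a $g(0)$-ball condition using Lemma \ref{lengthdist}: since $|\mathcal{S}|\le C(\e r_0)^{-2}$ on the region where the curvature bound just obtained holds, we have $d_{g(0)}(x,p)\ge c\, d_{g(t)}(x,p)$ for $t\le(\e r_0)^2$ with $c$ depending only on the universal constants, so a further decrease of $\e$ ensures $\{d_{g(0)}(x,p)\le \e r_0\}\subset\{d_{g(t)}(x,p)\le \e' r_0\}$. The main obstacle I anticipate is (a): making the volume-to-isoperimetric passage rigorous with constants depending only on $\underline{\alpha},\overline{\alpha},N$ requires either a careful compactness argument as above or a direct Bishop--Gromov-style comparison, and one must be attentive to the fact that the curvature bound is only assumed on $B_{g(0)}(p,r_0)$, not globally, so the compactness must be executed on balls strictly interior to $B_{g(0)}(p,r_0)$.
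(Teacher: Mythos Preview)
Your overall strategy matches the paper's proof: invoke Theorem \ref{pseudo}, patch the $\beta t^{-1}$ blow-up near $t=0$ using the initial curvature bound and continuity, then pass from $g(t)$-balls to $g(0)$-balls via Lemma \ref{lengthdist}. The paper's argument is terser than yours---it simply takes the reduction to Theorem \ref{pseudo} for granted and goes straight to the short-time continuity and distance-distortion steps---so your added step (a) is extra detail, not a different route.

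One point to flag in your step (a): the justification you give for the lower bound $\Sh(0)\ge -C r_0^{-2}$ is circular. You invoke the inequality $\alpha|\nabla\phi|^2 \le \Sc - \Sh_{\min}(0)$ from the discussion before Lemma \ref{rbounds}, but $\Sh_{\min}(0)$ is exactly the quantity you are trying to control, and a local bound $|\Rm|(0)\le r_0^{-2}$ on a ball says nothing about $|\nabla\phi(0)|^2$ by itself. The paper does not address this either; in practice (and in the applications in Section \ref{smt}) one is rescaling a flow that is already Cheeger--Gromov close to Euclidean space with constant map, so $|\nabla\phi(0)|$ is small and the $\Sh$ hypothesis of Theorem \ref{pseudo} is automatic. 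If you want a clean standalone statement you should either add a hypothesis $|\nabla\phi(0)|^2\le r_0^{-2}$ on $B_{g(0)}(p,r_0)$, or note explicitly that the reduction uses this in the intended applications. Your compactness argument for the volume-to-isoperimetric conversion is fine and is the standard way to handle that part.
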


\section{Proof of the Main Theorem}
\label{smt}
\label{typeI}
In this section we apply the theory developed above to the study of Type I singularities in the harmonic Ricci flow. We follow the structure in \cite{end1}.
\begin{definition}
Given $(M,g(t),\phi(t))$ a harmonic Ricci flow on $[0,T)$, a sequence of space-time points $(p_i,t_i) \in M \times [0,T)$ with $t_i \nearrow T$ is called an \textit{essential blow-up sequence} if there exists a constant $c>0$ such that
\begin{equation*}
|\Rm_{g(t_i)}|_{g(t_i)}(p_i) \ge \frac{c}{T-t_i}.
\end{equation*}
If $(M,g(t),\phi(t))$ is a Type I harmonic Ricci flow, a point $p \in M$ is called a \textit{Type I singular point} if there exists an essential blow-up sequence $(p_i,t_i)$ with $p_i \rightarrow p$. We denote by $\Sigma_I$ the set of Type I singular points.
\label{definitiontypeIsing}
\end{definition}
\begin{proof}[Proof of Theorem \ref{nontrivial}]
Fix a sequence $\lambda_j \longrightarrow +\infty$. Notice that it suffices to show that we can extract a subsequence converging to a limit as claimed in order to conclude that every possible limit flow verifies the same properties. Therefore we would like to use one of the compactness theorems obtained in Section \ref{compactness}. First of all, we note that the Type I assumption is preserved by the rescaling above:
\begin{equation}
|\Rm_{g_j(t)}|_{g_j(t)}(x)= \frac{1}{\lambda_j} \big|\Rm_{g\big(T+\frac{t}{\lambda_j}\big)}\big|_{g\big(T+\frac{t}{\lambda_j}\big)}(x) \le \frac{C}{\lambda_j\big(T-\big(T+\frac{t}{\lambda_j}\big)\big)}=\frac{C}{-t}.
\label{rescaledtypeI}
\end{equation}
This gives uniform curvature bounds only on compact subsets of $(-\infty,0)$. In order to have convergence on the full time domain, we may use the compactness Theorem \ref{compact2} on the time interval $[-n,1/n]$ for every $n \in \mathbb{N}^{+}$ and then use a diagonal argument to obtain a limit flow on $(-\infty,0)$. We need to check the uniform injectivity radii bound at the time, say, $t=-1$. We proceed as follows.

Let $l_{p,T}$ be any fixed reduced length based at the singular space-time point $(p,T)$ for the harmonic Ricci flow $(M,g(t),\phi(t))$, whose existence is guaranteed by Theorem \ref{redlensingtime}. For every space-time point $(q,\bar{t}) \in M \times (-\infty,0)$, it makes sense to consider for large enough $j$
\begin{equation*}
l^j_{p,0}(q,\bar{t}) \coloneqq l_{p,T}\Big(q, T+\frac{\bar{t}}{\lambda_j}\Big),
\end{equation*}
which is, by the scaling properties of the reduced length, a reduced length based at the singular time $t=0$  for the rescaled harmonic Ricci flow. We stress that these functions are locally uniformly Lipschitz continuous, since Remark \ref{constantdependence} guarantees that their Lipschitz norm depend only upon the compact subset chosen, the Type I bound (which is uniform in $j$ by (\ref{rescaledtypeI})) and other quantities that depend uniformly on $j$ . The corresponding reduced volumes verify $V^j_{p,0}(\bar{t})=V_{p,T}\big(T+\frac{\bar{t}}{\lambda_j}\big)$, and are uniformly bounded on compact subsets of $(-\infty,0)$. Thus we can assume, possibly taking a subsequence, that $V^j_{p,0}(\bar{t})$ is pointwise convergent in $(-\infty,0)$. Because of the monotonicity of the reduced volume based at the singular time, this limit is the constant $\lim_{t \nearrow T} V_{p,T}(t) \in (0,1]$, which is continuous, so the convergence is uniform on compact subsets of $(-\infty,0)$. In particular, $V^j_{p,0}(-1)$ is uniformly bounded away from zero, hence we obtain the uniform injectivity radii bound needed because of the $\kappa-$non-collapsing Theorem $6.13$ in \cite{mul}.

Therefore we can use Theorem \ref{compact2} to get a complete limit flow $(M_{\infty},g_{\infty}(t),\phi_{\infty}(t),p_{\infty})$ on $(-\infty,0)$ (after a diagonal argument).
By the discussion above we can assume that the sequence $l^j_{p,0}$ is converging in $C^{0,1}_{loc}(M_{\infty} \times (-\infty,0))$ to a certain function $l^{\infty}_{p_{\infty},0}$ (here we are pulling-back through the diffeomorphisms given by the Cheeger-Gromov convergence for considering every function as a function on $M_{\infty}$). Since its corresponding formal reduced volume is constant, we can conclude as in Section \ref{volume} that $(M_{\infty},g_{\infty}(t),\phi_{\infty}(t),l^{\infty}_{p_{\infty},0})$ is a normalized gradient shrinking harmonic Ricci soliton solution in canonical form. This proves the first statement.
\bigskip

Suppose now that $p \in \Sigma_I$. Arguing by contradiction, we assume that the limit $g_{\infty}(t)$ is flat for all $t<0$ and that the map $\phi_{\infty}$ is constant. In particular, they are independent of time, and we denote them by $\hat{g}$ and $\hat{\phi}$. Fix any $r_0$ smaller than the injectivity radius of $\hat{g}$ at $p_{\infty}$ (which can be shown to be infinite as we did in Section \ref{sss}), so that $B_{\hat{g}}(p_{\infty},r_0)$ is a Euclidean ball. By Cheeger-Gromov convergence, for any $\e$ smaller than one, we have that $B_{g_j(-(\e r_0)^2)}(p,r_0)$ is as close as we want to a Euclidean ball for every $j$ large enough, as well as $\phi_j$ is as close as we want to the constant map $\hat{\phi}$. Now for any $\beta$, we can pick $\e$ and $\delta$ given by Theorem \ref{pseudo}; after possibly reducing $\e$ we can assume that $B_{g_j(-(\e r_0)^2)}(p,r_0)$ verifies the assumptions in Theorem \ref{pseudo}, hence we have
\begin{equation}
|\Rm|_{g_j}(t,x) \le \beta (t+(\e r_0)^2)^{-1}+(\e r_0)^{-2} \ \ \ for \ \ -(\e r_0)^2 \le t<0, \ x \in B_{g_j(-(\e r_0)^2)}(p,\e r_0).
\label{firstboh}
\end{equation}
Using that $p$ is a Type I singular point, we get the existence of a sequence $(p_i,t_i)$, with $p_i \rightarrow p$, $t_i \nearrow T$ and of a constant $c>0$ such that
\begin{equation*}
|\Rm|_{g_j(\lambda_j(t_i-T))}(p_i) \ge \frac{c}{\lambda_j(T-t_i)}.
\end{equation*}
Therefore, for $i$ large enough we can use both the inequalities to get
\begin{equation*}
\frac{c}{\lambda_j(T-t_i)} \le \frac{\beta}{(\lambda_j(T-t_i)+(\e r_0)^2)} +(\e r_0)^{-2},
\end{equation*}
which yields a contradiction for $i$ large enough, since $T-t_i$ is tending to zero.
\end{proof}
\begin{definition}
Define $\Sigma_S \subset \Sigma_I$ to be the set of points $p$ in $M$ for which there exists a constant $c>0$ such that (for $t$ close to $T$)
\begin{equation*}
|\Sh_{g(t)}|(p) \ge \frac{c}{T-t}.
\end{equation*}
Moreover, we say that $p \in M$ is a \textit{singular point} if there does not exist any neighbourhood $U_p \ni p$ on which $|\Rm_{g(t)}|_{g(t)}$ remains bounded as $t$ approaches $T$. The set of such points is denoted  by $\Sigma$.
\end{definition}
\noindent It is clear from the definitions above that $\Sigma_S \subseteq \Sigma_I \subseteq \Sigma$. More interestingly, we have the following result.
\begin{theorem}
Let $(M,g(t),\phi(t))$ be a complete Type I harmonic Ricci flow on $[0,T)$ with finite singular time $T$, with non-increasing coupling function $\alpha(t) \in [\underline{\alpha},\overline{\alpha}]$, where $0<\underline{\alpha}\le \overline{\alpha}<\infty$. Then $\Sigma=\Sigma_S$, so every definition of singular set given above agrees with the others.
\begin{proof}
Firstly, we prove that $\Sigma_I \subseteq \Sigma_S$. Suppose that $p \in M \setminus \Sigma_S$. Then, there exist a sequence of number $c_j \searrow 0$ and $t_j \in [T-c_j,T)$ such that $\Sh(p,t_j) < \frac{c_j}{T-t_j}$. Let $\lambda_j\coloneqq (T-t_j)^{-1}$, and rescale the harmonic Ricci flow as in Theorem \ref{nontrivial}. The same theorem gives us that $(M,g_j(t),\phi_j(t),p)$ subconverges to a normalized complete gradient shrinking harmonic Ricci soliton in canonical form $(M_{\infty},g_{\infty}(t),\phi_{\infty}(t),p_{\infty})$ on $(-\infty,0)$, with
\begin{equation*}
0 \le \Sh_{g_{\infty}(-1)}(p_{\infty})= \lim \lambda_j^{-1}\Sh_{g(t_j)}(p) \le \lim c_j =0.
\end{equation*}
Here we have used the Type I assumption as well as the first part of Theorem \ref{rigidity}. The second conclusion in the latter theorem ensures that $(M_{\infty},g_{\infty}(t),\phi_{\infty}(t))$ is the Gaussian soliton. By Theorem \ref{nontrivial}, we conclude that $p$ must not be a Type I singular point. This allows us to conclude $\Sigma_I=\Sigma_S$.
\bigskip

In order to prove $\Sigma \subseteq \Sigma_I$ we claim the following: Given any $p \in M \setminus \Sigma_I$, there exists a neighbourhood $U_p$ of $p$ on which the curvature remains bounded. Indeed, given such a point $p$, for any $\lambda_j \rightarrow \infty$, the rescaled harmonic Ricci flow (as in Theorem \ref{nontrivial}) subconverges to the Gaussian soliton in canonical form. As we did in the theorem above, for large enough $j \ge j_0$ we can assume that the hypothesis in the Theorem \ref{pseudo} are verified with $r_0=1$. This yields the existence of constants $\e$ and $\delta$ such that, calling $K=\lambda_{j_0}$, we get
\begin{equation*}
|\Rm_{g_{j_0}(t)}|_{g_{j_0}(t)} \le \beta (t+\e^2)^{-1}+\e^{-2} \ \ \ for \ \ -\e^2 \le t<0, \ x \in B_{g_{j_0}(\e^2)}(p,\e),
\end{equation*}
which is equivalent to
\begin{equation*}
|\Rm_{g(t)}|_{g(t)} \le K(\beta (t+\e^2)^{-1}+\e^{-2}) \ \ \ \forall t \in \Big[T-\frac{\e^2}{K},T\Big)
\end{equation*}
on the neighbourhood $U_p = B_{g(T-\frac{\e^2}{K})}\big(p,\e/\sqrt{K}\big)$ of $p$. The curvature bound for times $t < T-\frac{\e^2}{K}$ easily derives from the Type I condition.
\end{proof}
\end{theorem}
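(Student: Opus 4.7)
The inclusions $\Sigma_S \subseteq \Sigma_I \subseteq \Sigma$ are immediate from the definitions, so the plan is to establish the reverse inclusions $\Sigma_I \subseteq \Sigma_S$ and $\Sigma \subseteq \Sigma_I$, each by contrapositive. The main tools will be the blow-up Theorem \ref{nontrivial}, the rigidity Theorem \ref{rigidity} for gradient shrinking solitons, and the pseudolocality Theorem \ref{pseudo}.

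For the first inclusion, suppose $p \notin \Sigma_S$. Negating the defining condition, there exist sequences $c_j \searrow 0$ and $t_j \nearrow T$ with $|\Sh_{g(t_j)}|(p) < c_j/(T-t_j)$. Set $\lambda_j = (T-t_j)^{-1}$ and apply Theorem \ref{nontrivial} to extract a subsequential Cheeger-Gromov limit $(M_\infty, g_\infty(t), \phi_\infty(t), p_\infty)$ which is a normalized gradient shrinking harmonic Ricci soliton solution in canonical form. Under the rescaling, $\Sh_{g_\infty(-1)}(p_\infty)$ equals $\lim_j \lambda_j^{-1} \Sh_{g(t_j)}(p)$, whose absolute value is at most $\lim_j c_j = 0$. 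Theorem \ref{rigidity} guarantees $\Sh \ge 0$ on any complete gradient shrinking soliton solution, so this forces $\Sh_{g_\infty(-1)}(p_\infty) = 0$, and the rigidity clause in the same theorem then forces the limit to be (locally) the Gaussian soliton. Invoking the non-triviality clause of Theorem \ref{nontrivial} contrapositively yields $p \notin \Sigma_I$.

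For the second inclusion, suppose $p \notin \Sigma_I$. Then for every blow-up sequence $\lambda_j \to \infty$, Theorem \ref{nontrivial} forces each subsequential limit to be the Gaussian soliton, so the rescaled flows are locally close to a flat metric with a constant map. Fix one such sequence and pick $j_0$ so large that, at the time $t = -1$ slice of $g_{j_0}$ around $p$, both the scalar-curvature-type bound $\Sh \ge -1$ on $B_{g_{j_0}(-1)}(p, 1)$ and the almost-Euclidean isoperimetric inequality $\Area(\partial \Omega)^n \ge (1-\delta) c_n \Vol(\Omega)^{n-1}$ hold, where $\delta$ is the constant furnished by Theorem \ref{pseudo} for the prescribed $\beta, N, \underline{\alpha}, \overline{\alpha}$. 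Applying pseudolocality on the parabolic region based at that time slice yields a uniform curvature bound up to the rescaled time $0$, i.e.\ up to time $T$ in the original flow, on a small $g_{j_0}$-ball around $p$. Unwinding the rescaling by $\lambda_{j_0}$ produces a neighborhood $U_p$ of $p$ on which $|\Rm|$ stays bounded on $[T - \e^2/\lambda_{j_0}, T)$; the Type I hypothesis supplies a uniform bound on $[0, T - \e^2/\lambda_{j_0}]$. Thus $|\Rm|$ remains bounded on $U_p$ as $t \nearrow T$, showing $p \notin \Sigma$.

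The main technical obstacle is the second step: verifying that the near-Euclidean hypotheses of Theorem \ref{pseudo} genuinely hold for $g_{j_0}$ after the blow-up. Since the Gaussian soliton limit is smooth, complete and flat, the Cheeger-Gromov convergence of the rescaled slices $g_j(-1)$ to the flat metric on a fixed ball is smooth, and hence the scalar-curvature-type quantity $\Sh$ and the intrinsic isoperimetric profile converge uniformly to their Euclidean values on that ball. For any prescribed $\delta > 0$ the isoperimetric hypothesis of pseudolocality is therefore met for all sufficiently large $j$, and a single $j_0$ can be chosen simultaneously realizing both the scalar curvature and the isoperimetric bounds; the remaining bookkeeping when translating the conclusion of pseudolocality back through the parabolic rescaling is routine.
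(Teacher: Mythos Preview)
Your proposal is correct and follows essentially the same route as the paper: contrapositive on both inclusions, using Theorem~\ref{nontrivial} plus the rigidity Theorem~\ref{rigidity} for $\Sigma_I\subseteq\Sigma_S$, and Theorem~\ref{nontrivial} plus the pseudolocality Theorem~\ref{pseudo} for $\Sigma\subseteq\Sigma_I$.

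One small slip worth fixing: in the second step you verify the pseudolocality hypotheses at the rescaled time $t=-1$ with $r_0=1$, but Theorem~\ref{pseudo} then only yields curvature control on $[-1,-1+\e^2]$, which does not reach rescaled time $0$ since $\e$ is small. Your own stated conclusion, a bound on $[T-\e^2/\lambda_{j_0},T)$, is the right one and corresponds instead to checking the hypotheses at the rescaled time $t=-\e^2$ (with $r_0=1$), exactly as the paper does; this is harmless because Cheeger--Gromov convergence to the flat limit holds at every fixed time in $(-\infty,0)$, in particular at $t=-\e^2$.
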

\noindent As a corollary we get the following non-oscillation result:
\begin{corollary}
Let $(M,g(t),\phi(t))$ be a complete Type I harmonic Ricci flow on $[0,T)$ with finite singular time $T$, with non-increasing coupling function $\alpha(t) \in [\underline{\alpha},\overline{\alpha}]$, where $0<\underline{\alpha}\le \overline{\alpha}<\infty$. Then for every $p \in \Sigma_I$ there exists a constant $c>0$ such that
\begin{equation}
|\Rm_g|(p,t) \ge \frac{c}{T-t}.
\end{equation}
\label{nonoscillation}
\end{corollary}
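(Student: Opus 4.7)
The plan is to bootstrap the identification $\Sigma_I \subseteq \Sigma = \Sigma_S$ from the theorem immediately preceding the corollary into a pointwise lower bound on $|\Rm|$ itself, using only the trivial algebraic estimate $|\Sc| \le C(n)|\Rm|$ and the monotonicity of $\Sh_{\min}$.

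First, given $p\in\Sigma_I$, I would invoke the theorem just above to deduce $p\in\Sigma=\Sigma_S$. Unwinding the definition of $\Sigma_S$ yields a constant $c>0$ and some $\eta>0$ with
\begin{equation*}
|\Sh_{g(t)}|(p)\ \ge\ \frac{c}{T-t}\qquad\text{for all }t\in(T-\eta,T).
\end{equation*}
This is the only real input used; the rest of the argument is algebraic.

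Next, I would remove the absolute value. The monotonicity of $\Sh_{\min}(t)$ noted right after the evolution equations in Section~\ref{results} gives the uniform lower bound $\Sh(p,t)\ge \Sh_{\min}(0)=:-C_0$. For $t$ close enough to $T$ we have $c/(T-t)>C_0$, so the only way to satisfy $|\Sh|(p,t)\ge c/(T-t)$ is $\Sh(p,t)\ge c/(T-t)$ (the alternative $\Sh(p,t)\le -c/(T-t)<-C_0$ is excluded). Then, since $\alpha(t)\ge\underline{\alpha}>0$ and $|\nabla\phi|^2\ge 0$, the very definition $\Sh=\Sc-\alpha|\nabla\phi|^2$ forces
\begin{equation*}
\Sc(p,t)\ \ge\ \Sh(p,t)\ \ge\ \frac{c}{T-t}.
\end{equation*}

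Finally, the elementary pointwise estimate $|\Sc|\le C(n)|\Rm|$, which follows from $\Sc=g^{ij}R_{ij}$ and the coordinate bound $|R_{ij}|\le C(n)|\Rm|$, yields
\begin{equation*}
|\Rm|(p,t)\ \ge\ \frac{\Sc(p,t)}{C(n)}\ \ge\ \frac{c/C(n)}{T-t},
\end{equation*}
which is the claim with the new constant $c'=c/C(n)$ (and after possibly shrinking $\eta$ to absorb the non-oscillation regime; the range $t\in(0,T-\eta]$ is handled trivially by increasing the constant, since $|\Rm|(p,t)(T-t)$ is continuous and positive on this compact subset—or simply vacuously if one interprets the bound near the singular time only). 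I do not anticipate any real obstacle here: the conceptual work—the blow-up analysis yielding $\Sigma=\Sigma_S$, which in turn rested on Theorem~\ref{nontrivial}, the rigidity Theorem~\ref{rigidity}, and the pseudolocality Theorem~\ref{pseudo}—has already been completed, and the present corollary is an essentially free consequence.
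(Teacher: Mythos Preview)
Your proof is correct and follows exactly the route the paper intends: the corollary is stated without proof immediately after the theorem establishing $\Sigma_I=\Sigma=\Sigma_S$, so the argument is precisely the algebraic chain $\Sh\le \Sc\le C(n)|\Rm|$ that you spell out. One small remark: your claim that $|\Rm|(p,t)(T-t)$ is \emph{positive} on $[0,T-\eta]$ is not justified (nothing prevents $|\Rm|(p,t_0)=0$ at some earlier time), so the clean statement is the near-$T$ bound---which, as you correctly note, is the natural reading given that $\Sigma_S$ itself is defined only ``for $t$ close to $T$'' and is exactly what ``non-oscillation'' means in this context.
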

\begin{theorem}
Let $(M,g(t),\phi(t))$ be a complete Type I harmonic Ricci flow on $[0,T)$ with finite singular time $T$, with non-increasing coupling function $\alpha(t) \in [\underline{\alpha},\overline{\alpha}]$, where $0<\underline{\alpha}\le \overline{\alpha}<\infty$. Then $\Vol_{g(0)}(\Sigma) < \infty$ implies that $\Vol_{g(t)}(\Sigma) \longrightarrow 0$ as $t \nearrow T$.
\begin{proof}
By the Type I assumption we can apply the maximum principle to the evolution equation of $\Sh$ to get the existence of a constant $\tilde{C}$ such that $\inf_M \Sh(t) \ge - \tilde{C}$ for all $t \in [0,T)$. Define the following sets
\begin{equation*}
\Sigma_{S,k} \coloneqq \bigg\{p \in M \bigg| \Sh(p,t) \ge \frac{1/k}{T-t}, \forall t \in \bigg(T-\frac{1}{k},T\bigg)\bigg\} \subseteq \Sigma_S=\Sigma,
\end{equation*}
for every $k \in \mathbb{N}^{+}$, and $\Sigma_{S,0} \coloneqq \emptyset$. We claim that for every point $x \in \Sigma_{S,k}$ and for all $t \in [0,T)$ we have
\begin{equation*}
\int_0^t{\Sh(x,s) ds} \ge -\tilde{C}T+\log \bigg( \frac{1/k}{T-t} \bigg)^{1/k}.
\end{equation*}
Clearly, this holds for $t \le T-\frac{1}{k}$, since the argument of the logarithm is less than one and by the initial consideration $\int_0^t{\Sh(x,s) ds} \ge -\tilde{C}t \ge -\tilde{C}T$. Instead for $t \in \big(T-\frac{1}{k},T\big)$, the definition of $\Sigma_{S,k}$ yields
\begin{align*}
\int_0^t{\Sh(x,s) ds}&=\int_0^{T-1/k}{\Sh(x,s) ds}+\int_{T-1/k}^t{\Sh(x,s) ds} \ge -\tilde{C}\bigg(T-\frac{1}{k}\bigg)+\int_{T-1/k}^t{\frac{1/k}{T-s} ds} \\
&\ge -\tilde{C}T+\log \bigg( \frac{1/k}{T-t} \bigg)^{1/k}.
\end{align*}
Using the evolution equation for the volume form, as well as the inequality $k \le 2^k$ for all $k \in \mathbb{N}$, we get
\begin{equation*}
\Vol_{g(t)}(\Sigma_{S,k} \setminus \Sigma_{S,k-1})=\int_{\Sigma_{S,k} \setminus \Sigma_{S,k-1}}{e^{-\int_0^t{\Sh(x,s) ds}} dvol_{g(0)}(x)} \le 2e^{\tilde{C}T}(T-t)^{1/k}\Vol_{g(0)}(\Sigma_{S,k} \setminus \Sigma_{S,k-1}).
\end{equation*}
Notice that
\begin{equation*}
\sum_k{\Vol_{g(0)}(\Sigma_{S,k} \setminus \Sigma_{S,k-1})}=\Vol_{g(0)}(\Sigma_{S}) < \infty,
\end{equation*}
so we can use the (discrete) dominated convergence theorem to conclude that 
\begin{align*}
\limsup_{t \rightarrow T}{\Vol_{g(t)}(\Sigma_{S})}&=\limsup_{t \rightarrow T}{\sum_k{\Vol_{g(t)}(\Sigma_{S,k} \setminus \Sigma_{S,k-1})}}\\
&\le 2e^{\tilde{C}T} \limsup_{t \rightarrow T}{\sum_k{ (T-t)^{1/k}\Vol_{g(0)}(\Sigma_{S,k} \setminus \Sigma_{S,k-1})}}=0.
\end{align*}
\end{proof}

\end{theorem}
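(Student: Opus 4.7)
The plan is to reduce to the already-established identification $\Sigma = \Sigma_S$, stratify $\Sigma_S$ according to the rate at which $\Sh$ blows up, and then exploit the volume-form evolution equation $\partial_t \, dvol_{g(t)} = -\Sh \, dvol_{g(t)}$ together with a uniform lower bound on $\Sh$ to show that each stratum loses volume polynomially in $(T-t)$. A discrete dominated convergence argument on the resulting series then delivers the conclusion. This follows the structure of the analogous Ricci-flow result in \cite{end1}, taking advantage of the dominance of the metric component of the flow.

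More precisely, I would first observe that by the maximum principle applied to the evolution equation of $\Sh$ recalled just after Lemma 2.1, the quantity $\Sh_{\min}(t)$ is monotonically non-decreasing, so there exists a constant $\tilde C>0$ with $\Sh(\cdot, t) \ge -\tilde C$ on $M\times[0,T)$. Next, I would introduce the exhaustion of $\Sigma=\Sigma_S$ given by
\begin{equation*}
\Sigma_{S,k} \coloneqq \Big\{ p \in M \,\Big|\, \Sh(p,t) \ge \tfrac{1/k}{T-t}, \ \forall t \in \big(T-\tfrac{1}{k},T\big)\Big\}, \qquad \Sigma_{S,0} \coloneqq \emptyset,
\end{equation*}
noting that each $p \in \Sigma_S$ lies in some $\Sigma_{S,k}$ by the very definition of $\Sigma_S$ (after possibly enlarging $k$). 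Splitting the time integral at $T-1/k$ and bounding the pre-$(T-1/k)$ part using $\Sh \ge -\tilde C$, while integrating the sharper lower bound $\Sh \ge (1/k)/(T-s)$ on the terminal interval, I would obtain for any $x \in \Sigma_{S,k}$ and $t$ close to $T$
\begin{equation*}
\int_0^t \Sh(x,s)\, ds \ge -\tilde C\, T + \tfrac{1}{k}\log\frac{1/k}{T-t}.
\end{equation*}

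Combining this with $\partial_t\, dvol_{g(t)} = -\Sh\, dvol_{g(t)}$, which integrates to $dvol_{g(t)}(x) = \exp\!\big(-\int_0^t \Sh(x,s)\, ds\big)\, dvol_{g(0)}(x)$, and using the elementary inequality $k^{1/k}\le 2$, I would derive the key pointwise/volume estimate
\begin{equation*}
\Vol_{g(t)}(\Sigma_{S,k}\setminus\Sigma_{S,k-1}) \le 2 e^{\tilde C T}(T-t)^{1/k}\, \Vol_{g(0)}(\Sigma_{S,k}\setminus\Sigma_{S,k-1}).
\end{equation*}
Finally, since $\Sigma = \Sigma_S = \bigsqcup_k (\Sigma_{S,k}\setminus \Sigma_{S,k-1})$ and $\sum_k \Vol_{g(0)}(\Sigma_{S,k}\setminus\Sigma_{S,k-1}) = \Vol_{g(0)}(\Sigma_S) < \infty$, the summand on the right is dominated by an $\ell^1$ sequence (independent of $t$, since $(T-t)^{1/k}\le 1$ once $T-t\le 1$). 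Each individual term tends to $0$ as $t\nearrow T$, so the discrete dominated convergence theorem yields $\limsup_{t\nearrow T}\Vol_{g(t)}(\Sigma) = 0$, as desired.

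The main potential obstacle is verifying that the stratification actually exhausts $\Sigma$: that is, knowing $\Sigma = \Sigma_S$ (from the preceding theorem) gives that at each $p\in\Sigma$ there exists $c_p>0$ and a sequence $t_k\nearrow T$ with $\Sh(p,t_k)\ge c_p/(T-t_k)$, but the definition of $\Sigma_{S,k}$ requires the inequality to hold \emph{for all} $t$ near $T$. However, thanks to the non-oscillation Corollary \ref{nonoscillation} applied to $\Sh$ (which in turn uses the Type I assumption together with the rigidity of the Gaussian soliton ruled out in Theorem \ref{nontrivial}), one can upgrade the sequential lower bound to a pointwise-in-time lower bound, and the stratification indeed covers $\Sigma_S$. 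Apart from this bookkeeping, the remaining calculations are routine; the crucial gain $(T-t)^{1/k}$ comes purely from the volume-form evolution coupled with the stratified lower bound on $\Sh$.
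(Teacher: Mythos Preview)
Your proof is essentially identical to the paper's: same lower bound on $\Sh$, same stratification $\Sigma_{S,k}$, same integral estimate, same use of the volume-form evolution and discrete dominated convergence.

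One remark on your final paragraph: the ``potential obstacle'' you raise is not actually present. The definition of $\Sigma_S$ in the paper already requires $|\Sh(p,t)| \ge c_p/(T-t)$ for \emph{all} $t$ close to $T$, not merely along a sequence; combined with the uniform lower bound $\Sh \ge -\tilde C$, this forces $\Sh(p,t) \ge c_p/(T-t)$ once $(T-t)$ is small, so each $p\in\Sigma_S$ lies in some $\Sigma_{S,k}$ directly. No non-oscillation result is needed here, and Corollary~\ref{nonoscillation} concerns $|\Rm|$, not $\Sh$, so invoking it is off-target. This bookkeeping aside, your argument is correct and matches the paper's.
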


\end{document}